\renewcommand{\cite}{\citet*}
\theoremstyle{plain}
\newtheorem{theorem}{Theorem}[section]
\newtheorem{lemma}[theorem]{Lemma}
\newtheorem{corollary}[theorem]{Corollary}
\theoremstyle{definition}
\newtheorem{remark}[theorem]{Remark}
\def\be#1{\begin{equation*}#1\end{equation*}}
\def\ben#1{\begin{equation}#1\end{equation}}
\def\bes#1{\begin{equation*}\begin{split}#1\end{split}\end{equation*}}
\def\besn#1{\begin{equation}\begin{split}#1\end{split}\end{equation}}
\def\bg#1{\begin{gather*}#1\end{gather*}}
\def\bm#1{\begin{multline*}#1\end{multline*}}
\def\ba#1{\begin{align*}#1\end{align*}}
\def\given{\typeout{Command 'given' should only be used within bracket command}}
\newcounter{@bracketlevel}
\def\@bracketfactory#1#2#3#4#5#6{
\expandafter\def\csname#1\endcsname##1{%
\addtocounter{@bracketlevel}{1}%
\global\expandafter\let\csname @middummy\alph{@bracketlevel}\endcsname\given%
\global\def\given{\mskip#5\csname#4\endcsname\vert\mskip#6}\csname#4l\endcsname#2##1\csname#4r\endcsname#3%
\global\expandafter\let\expandafter\given\csname @middummy\alph{@bracketlevel}\endcsname
\addtocounter{@bracketlevel}{-1}}%
}
\def\bracketfactory#1#2#3{%
\@bracketfactory{#1}{#2}{#3}{relax}{1mu plus 0.25mu minus 0.25mu}{0.6mu plus 0.15mu minus 0.15mu}
\@bracketfactory{b#1}{#2}{#3}{big}{1mu plus 0.25mu minus 0.25mu}{0.6mu plus 0.15mu minus 0.15mu}
\@bracketfactory{bb#1}{#2}{#3}{Big}{2.4mu plus 0.8mu minus 0.8mu}{1.8mu plus 0.6mu minus 0.6mu}
\@bracketfactory{bbb#1}{#2}{#3}{bigg}{3.2mu plus 1mu minus 1mu}{2.4mu plus 0.75mu minus 0.75mu}
\@bracketfactory{bbbb#1}{#2}{#3}{Bigg}{4mu plus 1mu minus 1mu}{3mu plus 0.75mu minus 0.75mu}
}
\newcounter{ctr}\loop\stepcounter{ctr}\edef\X{\@Alph\c@ctr}%
\edef\csname s\X\endcsname{\noexpand\mathscr{\X}}
\edef\csname c\X\endcsname{\noexpand\mathcal{\X}}
\edef\csname b\X\endcsname{\noexpand\boldsymbol{\X}}
\edef\csname I\X\endcsname{\noexpand\mathbbm{\X}}
\def\now{%
\minute=\time%
\hour=\time \divide \hour by 60%
\hourMins=\hour \multiply\hourMins by 60%
\advance\minute by -\hourMins%
\zeroPadTwo{\the\hour}:\zeroPadTwo{\the\minute}%
}
\def\zeroPadTwo#1{\ifnum #1<10 0\fi#1}
\numberwithin{equation}{section}
\renewcommand\section{\@startsection {section}{1}{\z@}%
{-3.5ex \@plus -1ex \@minus -.2ex}%
{1.3ex \@plus.2ex}%
{\center\small\sc\mathversion{bold}\MakeUppercase}}
\def\subsection#1{\@startsection {subsection}{2}{0pt}%
{-3.5ex \@plus -1ex \@minus -.2ex}%
{1ex \@plus.2ex}%
{\bf\mathversion{bold}}{#1}}
\def\subsubsection#1{\@startsection{subsubsection}{3}{0pt}%
{\medskipamount}%
{-10pt}%
{\normalsize\itshape}{\kern-2.2ex. #1.}}
\def\blfootnote{\xdef\@thefnmark{}\@footnotetext}
\def\^#1{\relax\ifmmode{\mathaccent"705E #1}\else{\accent94 #1}\fi}
\def\~#1{\relax\ifmmode{\mathaccent"707E #1}\else{\accent"7E #1}\fi}
\edef\-#1{\noexpand\ifmmode {\noexpand\bar{#1}} \noexpand\else \-#1\noexpand\fi}
\def\>#1{\vec{#1}}
\def\.#1{\dot{#1}}
\def\atop{\@@atop}
\renewcommand{\leq}{\leqslant}
\renewcommand{\geq}{\geqslant}
\renewcommand{\phi}{\varphi}
\newcommand{\eq}{\eqref}
\newcommand{\dw}{\mathop{d_{\mathrm{W}}}}
\newcommand{\dk}{\mathop{d_{\mathrm{K}}}}
\newcommand{\toinf}{\to\infty}
\newcommand{\Var}{\mathop{\mathrm{Var}}\nolimits}
\newcommand{\Cov}{\mathop{\mathrm{Cov}}\nolimits}
\newcommand{\law}{\mathscr{L}}
\newcommand{\I}{\mathrm{I}}
\newcommand{\Be}{\mathrm{Be}}
\newcommand{\ahalf}{{\textstyle\frac12}}
\newcommand{\tssum}{\textstyle\sum}
\def\ds{\displaystyle}
\def\ER{Erd\H{o}s-R\'enyi}
\begin{document}

\title{\sc\bf\large\MakeUppercase{Kolmogorov bounds for the normal approximation of the number of triangles in the Erd\H{o}s-R\'enyi random graph}}
\author{\sc Adrian R\"ollin}
\date{\it National University of Singapore}
\maketitle

\begin{abstract} We bound the error for the normal approximation of the number of triangles in the \ER\ random graph with respect to the Kolmogorov metric. Our bounds match the best available Wasserstein-bounds obtained by \cite{Barbour1989}, resolving a long-standing open problem. The proofs are based on a new variant of the Stein-Tikhomirov method --- a combination of Stein's method and characteristic functions introduced by \cite{Tikhomirov1980}. 
\end{abstract}

\section{Introduction}

Consider the \ER\ random graph~$G(n,p)$, where~$p$ is allowed to depend on~$n$. The conditions under which subgraph counts exhibit asymptotic normality were fully characterised by \cite{Rucinski1988}; in the particular case of triangles, a normal limit holds if and only if 
\ben{\label{1a}
	\lim_{n\toinf} np = \infty
	\qquad\text{and}\qquad
	\lim_{n\toinf} n^2(1-p) = \infty.
}
This result was complemented by corresponding rates of convergence by \cite{Barbour1989} for the Wasserstein metric~$\dw$. Let~$T$ denote the number of triangles in~$G(n,p)$, and let~$W=(T-\IE T)/\sqrt{\Var T}$ denotes the centred and normalised number of triangles; \cite{Barbour1989} showed that there is a universal constant~$C$ such that
\ben{\label{1}
	\dw\clr{\law(W),\Phi}\leq \begin{cases}
			\ds\frac{C}{n(1-p)^{1/2}}	
				& \text{if~$1/2<p<1$,}\\[2ex]
			\ds\frac{C}{np^{1/2}}
				& \text{if~$n^{-1/2}< p \leq 1/2$,}\\[2ex]
			\ds\frac{C}{n^{3/2}p^{3/2}}
				& \text{if~$0<p\leq n^{-1/2}$,}
	\end{cases}
}
where~$\Phi$ is the standard normal distribution function.
This bound indeed goes to zero exactly under the conditions of asymptotic normality obtained by \cite{Rucinski1988}.

Obtaining bounds with respect to the Kolmogorov metric~$d_K$ has turned out to be much harder, with no progress until recently. A straightforward bound, as is well known, can be obtained through the relation
\ben{\label{2}
	\dk(\law(W),\Phi)\leq \sqrt{\dw(\law(W),\Phi)}
} 
(see e.g.\ \cite[Proposition~1.2]{Ross2011}).
To the best of our knowledge, the first result that improves on~\eq{2} was obtained by \cite[Theorem~4.11]{Rollin2012a}, which is a stronger total variation approximation by a translated Poisson distribution. Better rates for the Kolmogorov metric were obtained later by \cite{Krokowski2015} using Stein's method in combination with Malliavin-type methods.

The following is our main result.

\begin{theorem}\label{THM1} Let~$W$ be the centred and normalised number of triangles in~$G(n,p)$. There is a universal constant~$C$ such that, for every~$n\geq 3$ and every~$0<p<1$,
\ben{\label{3}
	\dk\bclr{\law(W),\Phi}
	\leq \begin{cases}
			\ds\frac{C}{n(1-p)^{1/2}}	
				& \text{if~$1/2<p<1$,}\\[2ex]
			\ds\frac{C}{np^{1/2}}
				& \text{if~$n^{-1/2}< p \leq 1/2$,}\\[2ex]
			\ds\frac{C}{n^{3/2}p^{3/2}}
				& \text{if~$0<p\leq n^{-1/2}$.}
	\end{cases}
}
\end{theorem}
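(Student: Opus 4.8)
The plan is to follow the Stein--Tikhomirov programme announced in the abstract, working directly with the characteristic function $\psi(t)=\IE e^{itW}$ rather than with smooth test functions. The normal characteristic function $e^{-t^2/2}$ is the unique solution of the ODE $\psi'(t)=-t\psi(t)$ with $\psi(0)=1$, so the whole strategy is to show that $\psi$ satisfies this relation up to a small, explicitly controlled remainder. Concretely, I would aim to prove a differential inequality of the form $\abs{\psi'(t)+t\psi(t)}\le \rho(t)$, where $\rho$ is a polynomial in $\abs t$ multiplied by the target error rate $\delta_{n,p}$ appearing on the right-hand side of \eq{3}, and then convert this into a Kolmogorov bound via a smoothing (Esseen-type) inequality.

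To derive the ODE I would write $W=\sum_\alpha \xi_\alpha$, where $\alpha$ ranges over the triangles (unordered triples) and $\xi_\alpha=(Y_\alpha-\IE Y_\alpha)/\sqrt{\Var T}$ is the centred and normalised indicator of the triangle $\alpha$, so that $\psi'(t)=i\sum_\alpha\IE[\xi_\alpha e^{itW}]$. For each $\alpha$ let $N_\alpha$ be the set of triangles sharing an edge with $\alpha$ (those containing two of its three vertices, including $\alpha$ itself) and set $W_\alpha=W-\sum_{\beta\in N_\alpha}\xi_\beta$; the key point is that $\xi_\alpha$ is independent of $W_\alpha$, since the latter depends only on edges disjoint from those of $\alpha$. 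Taylor-expanding $e^{itW}=e^{itW_\alpha}e^{it(W-W_\alpha)}$, the zeroth-order term vanishes because $\IE\xi_\alpha=0$ and $\xi_\alpha\perp W_\alpha$, while the first-order term, after replacing $e^{itW_\alpha}$ by $e^{itW}$ at the cost of a further remainder, produces $i\cdot it\sum_\alpha\IE[\xi_\alpha(W-W_\alpha)]\,\psi(t)$. Since $\IE[\xi_\alpha(W-W_\alpha)]=\IE[\xi_\alpha W]$ (again by $\xi_\alpha\perp W_\alpha$) and $\sum_\alpha\IE[\xi_\alpha W]=\IE[W^2]=1$, this leading term is exactly $-t\psi(t)$, which is the mechanism that pins the variance normalisation to the ODE.

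The remainders collect the second- and higher-order Taylor terms, of the shape $t^2\sum_\alpha\IE[\abs{\xi_\alpha}\,(W-W_\alpha)^2]$, together with the decoupling error $t\sum_\alpha\Cov(\xi_\alpha(W-W_\alpha),e^{itW_\alpha})$ created when $e^{itW_\alpha}$ is replaced by $e^{itW}$. I would bound each of these by moment estimates for the local neighbourhood sums $W-W_\alpha$, which reduce to counting configurations of two or three triangles sharing edges or vertices (cherries, books, and the like) weighted by the appropriate powers of $p$ and $1-p$; dividing by the corresponding powers of $\sqrt{\Var T}$ yields the rate $\delta_{n,p}$. Once $\abs{\psi'(t)+t\psi(t)}\le\rho(t)$ is in hand, the integrating factor $e^{t^2/2}$ gives $\abs{\psi(t)-e^{-t^2/2}}\le e^{-t^2/2}\int_0^{\abs t}e^{s^2/2}\rho(s)\,ds$, and feeding this into the smoothing inequality $\dk(\law(W),\Phi)\le \tfrac{2}{\pi}\int_0^{U}\abs{\psi(t)-e^{-t^2/2}}\,t^{-1}\,dt+C/U$ and optimising over the cutoff $U$ produces a bound of order $\delta_{n,p}$.

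I expect the hard part to be the uniform control of the remainder $\rho(t)$, for two reasons. First, $W-W_\alpha$ is itself a normalised sum of order $n$ triangle indicators and is not small, so a crude Taylor bound diverges; the saving must come from cancellation after summing over $\alpha$ together with the factor $\xi_\alpha$, and one may need to expand to a carefully chosen order or to re-expand the decoupling covariance once more. Second, the combinatorial moment estimates must be sharp simultaneously in all three ranges of $p$: for moderate $p$ the variance is dominated by edge-sharing pairs of triangles ($\Var T\asymp n^4p^5(1-p)$), for $p\le n^{-1/2}$ by single triangles ($\Var T\asymp n^3p^3$), and for $p\to1$ the fluctuations are governed by the complement graph, which is what produces the $1-p$ factor. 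Matching the three cases in \eq{3}, and in particular not losing the square-root improvement over the trivial bound \eq{2}, is where the delicate balancing of $t$-powers against configuration counts will be decisive.
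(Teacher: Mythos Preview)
Your overall framework---Stein--Tikhomirov via the local decomposition $W=W_\alpha+(W-W_\alpha)$ with $\xi_\alpha\perp W_\alpha$, followed by Esseen smoothing---is exactly the route the paper takes, and your identification of the leading term $-t\psi(t)$ is correct. But two concrete ingredients are missing, and without them the argument cannot reach the rates in~\eq{3}.

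First, the ``decoupling error'' is not controllable by moment estimates of the type $t^2\sum_\alpha\IE[|\xi_\alpha|(W-W_\alpha)^2]$. In the paper's formulation the remainder is $b(t)=i\IE\{(G(e^{itD}-1)-\IE[\,\cdot\,])e^{itW}\}$, and a crude absolute-value bound $|b(t)|\lesssim |t|\,\IE|GD|$ is useless. The sharp bound passes through Cauchy--Schwarz to $\sqrt{\Var\IE^W[G(e^{itD}-1)]}$, which after conditioning on the edge indicators becomes a double sum of covariances $\Cov\bigl(X_v f(Y_v),X_{v'}f(Y_{v'})\bigr)$ over \emph{pairs} of triangles. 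For $|v\cap v'|\le 1$ these covariances must be shown to be genuinely small, and the mechanism is \emph{association}: Newman's inequality $|\Cov(f(Y_v),g(Y_{v'}))|\le\|f'\|\|g'\|\Cov(Y_v,Y_{v'})$, combined with the conditional covariance formula to peel off $X_vX_{v'}$. This is the content of Lemmas~\ref{lem9}--\ref{lem12} and the accompanying case tables, and it is not visible from single-$\alpha$ moment bounds.

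Second, even with the correct variance bound, a one-level expansion gives only $|b(t)|\le B_1|t|$, and feeding this into the smoothing inequality produces an unavoidable logarithm (from $\int_0^T\min(t,1/t)\,dt$); the paper states this explicitly after Theorem~\ref{THM2}. To remove the log one needs a \emph{second} decoupling layer: rewrite $\IE\{(GD-1)e^{itW}\}$ via an extended coupling $(\tilde D,S,W'')$ with $\IE^{W''}(G\tilde D)=\IE^{W''}S$, so that the remainder becomes $O(t^2)$ rather than $O(|t|)$. Your closing remark that one ``may need to re-expand the decoupling covariance once more'' is pointing at the right spot, but the actual construction---randomising over the neighbourhood $\nu_v$ and matching the conditional means---is a genuine additional idea, not a routine iteration of the first step.
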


\begin{remark} 
In order to analyse the optimality of the bound \eq{3}, consider the variance of the number of triangles (see Lemma~\ref{lem7}), which satisfies 
\be{
\Var T \asymp \begin{cases}
			n^4(1-p)	
				& \text{if~$1/2<p<1$,}\\[2ex]
			n^4p^5
				& \text{if~$n^{-1/2}< p \leq 1/2$,}\\[2ex]
			n^3p^3
				& \text{if~$0<p\leq n^{-1/2}$,}
	\end{cases}
}
where we write~$f(n)\asymp g(n)$ if the ratio~$f(n)/g(n)$ is bounded away from 0 and infinity as~$n\toinf$. 
If~$0<p\leq n^{-1/2}$, the bound~\eq{3} is of order~$1/\sqrt{\Var T}$, which is best possible for an integer-valued random variable normalised by its standard deviation. In the other two cases, the dependence between the triangle indicators becomes so strong that the covariance terms change the order of the variance, but the following example shows that the rate in~\eq{3} is still within what can be expected for sums of random variables with a similar covariance structure.

For~$1\leq i<j<k\leq n$, let~$I_{ij} \sim \Be(p)$ and~$I_{ijk} \sim \Be(p^2)$ be independent random variables, and let~$X_{ijk}=I_{ij}I_{ijk}$. With~$Y = \sum_{1<i<j<k\leq n} X_{ijk}$ we have $\IE Y = {n\choose 3}p^3 = \IE T$ and 
\be{
	s^2:=\Var Y 
	= {n\choose 3}\bclr{p^3(1-p)^3+(n-3)p^5(1-p)}
  \asymp \Var T.
}
 Since~$Y$ is the sum of~${n\choose 2}$ independent and identically distributed random variables, each being distributed like~$I_{12}\sum_{3\leq k\leq n} I_{12k}$, we can apply the Berry-Esseen theorem and obtain
\ben{\label{4}
	\dk\bclr{(Y-\IE Y)/s,\Phi}\leq \frac{Cn^2\gamma}{s^3},
	\qquad\text{where}
	\quad
	\gamma = \IE\bbabs{I_{12}\sum_{3\leq k\leq n}I_{12k}-(n-2)p^3}^3.
}
Now,
\be{
	\gamma
	 = p\IE\bbabs{\sum_{3\leq k\leq n}(I_{12k}-p^2) + (n-2)p^2(1-p)}^3 + (1-p)(n-2)^3p^9.
}
First, consider the case where $n^{-1/2}<p\leq 1/2$; we can use Bernstein's inequality to conclude that $\sum_{3\leq k\leq n}(I_{12k}-p^2)$ is strongly concentrated around $0$, so that $\gamma\asymp n^3p^7$. Recalling that $s^2\asymp n^4p^5$, it follows that the bound in \eq{4} is of order $n^{-1}p^{-1/2}$, which is the same as that of~\eq{3}. 
In the case where $1/2<p<1$, we can bound
\besn{\label{4b}
	&\IE\bbabs{\sum_{3\leq k\leq n}(I_{12k}-p^2) + 	(n-2)p^2(1-p)}^3 \\
	&\qquad\leq  \IE\bbabs{\sum_{3\leq k\leq n}(I_{12k}-p^2)}^3 + (n-2)^3p^6(1-p)^3 \\
	&\qquad\leq  C\bclr{\max\bclc{n(1-p),n^2(1-p)^2}}^{3/4} + Cn^3(1-p)^3.
}
If~$\lim n(1-p)=\infty$, the bound~\eq{4b} is of order~$n^3(1-p)^3$, and so~$\gamma\asymp n^3(1-p)$. If~$\lim \sup n(1-p)<\infty$, we also have $\gamma\asymp n^3(1-p)$, as~$\lim n^2(1-p)=\infty$ by \eq{1a}. Recalling that $s^2\asymp n^4(1-p)$, we conclude that the bound in \eq{4} is of order~$n^{-1}(1-p)^{-1/2}$, which, again, is the same as that of~\eq{3}. 

Since the Berry-Esseen theorem gives optimal rates in general, this example strongly indicates that~\eq{3} indeed yields the correct rate of convergence also for~$n^{-1/2}<p\leq 1/2$ and $1/2<p<1$.
\end{remark}

\section{The Stein-Tikhomirov method}

Our proof is based on a method introduced by \cite{Tikhomirov1980}, who takes elements from Stein's method, initiated by \cite{Stein1972}, and combines them with characteristic functions. As far as we can tell, the Stein-Tikhomirov method has only been successfully applied to prove CLTs for sums of random variables with temporal or spatial dependence and some mixing conditions; see for example \cite{Bulinskii1996}, who obtained CLTs for associated random variables index by the~$d$-dimensional lattice assuming an exponential decay of the covariances. In order to apply the Stein-Tikhomirov method to triangle counts, we develop in this section a new abstract theorem by combining Tikhomirov's approach with ideas from the more recent literature around Stein's method; we use, in particular, \emph{Stein couplings} from \cite{Chen2010b}.

To this end, we say a triple of random variables~$(W,W',G)$ is a Stein coupling if
\ben{\label{5}
	\IE\clc{Gf(W')-Gf(W)} = \IE\clc{Wf(W)}
}
for all functions~$f$ for which the expectations exist. We refer to \cite{Chen2010b} for examples and applications of Stein couplings.

Although the random variables of interest are real valued, we will need to consider complex valued random variables due to the use of characteristic functions. If~$z=x+iy\in\IC$, we denote by~$z^*=x-iy$ its complex conjugate. If~$X=X_1+iX_2$ is a complex random variables, we define as usual
\be{
	\Var X = \IE\clc{(X-\IE X)(X-\IE X)^*},
}
and if~$Y$ is another complex random variable, we define
\be{
	\Cov(X,Y)=\IE\clc{(X-\IE X)(Y-\IE Y)^*}
}
All the usual properties of the variance and covariance functions remain the same, except that~$\Cov(X,Y)=\Cov(Y,X)^*$, from which we conclude that~$\Var(aX)=\abs{a}^2\Var(X)$ and~$\Cov(aX,bY)=ab^*\Cov(X,Y)$ for any~$a,b\in\IC$.

The following is our main result in this section. 

\begin{theorem}\label{THM2} Let~$(W,W',G)$ be a Stein coupling with~$\Var W = 1$. Then
\ben{\label{6}
	\sup_{x\in \IR}\abs{\IP[W\leq x]-\IP[Z\leq x]}
	\leq 0.38r_1+3.05\~r_1+ 0.64r_2\bbclr{1+2\log_+\frac{1}{2\~r_1}},
}
where, with~$D=W'-W$,
\be{
	r_1=\IE\abs{GD^2},
	\qquad
	r_2 = \sup_{t\neq0}\frac{\bcls{\Var\IE^W(G(e^{itD}-1))}^{1/2}}{\abs{t}},
}	
and where~$\~r_1\geq r_1$ is arbitrary.
If, in addition, there are random variables~$\~D$, $S$ and~$W''$ such that~$\IE^W(G\~D)=\IE^W(GD)$, such that~$\IE^W S = 1$,  and such that~$\IE^{W''}(G\~D)=\IE^{W''}S$, then
\ben{\label{7}
	\sup_{x\in \IR}\abs{\IP[W\leq x]-\IP[Z\leq x]}
	\leq 0.76r_3+ 6.10\~r_3+ 0.64\frac{r_4}{\~r_3},
}
where, with~$D'=W''-W$,
\ba{
	r_3 & = \ahalf\IE\abs{GD^2} + \IE\abs{G\~DD'}+\IE\abs{SD'},\\
	r_4 & = \sup_{t\neq0}\frac{\bcls{\Var\IE^W(G(e^{itD}-1-itD))}^{1/2}}{t^2}
	 + \sup_{t\neq0}\frac{\bcls{\Var\IE^W(G\~D(e^{itD'}-1))}^{1/2}}{\abs{t}} \\ 
	&\qquad+ \sup_{t\neq0}\frac{\bcls{\Var\IE^W(S(e^{itD'}-1))}^{1/2}}{\abs{t}},
}
and where~$\~r_3\geq r_3$ is arbitrary.
\end{theorem}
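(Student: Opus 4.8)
The plan is to prove Theorem~\ref{THM2} by bounding the characteristic function difference $\phi_W(t) - e^{-t^2/2}$ and then converting this to a Kolmogorov bound via a smoothing inequality (Esseen-type). Let $\phi(t) = \IE e^{itW}$. The Stein-coupling identity \eq{5}, applied with $f(w) = e^{itw}$, is the engine: it gives $\IE\{G(e^{itW'} - e^{itW})\} = \IE\{W e^{itW}\}$. Recognizing that $\IE\{iW e^{itW}\} = \phi'(t)$ and rewriting the left-hand side using $W' = W + D$, I would expand $e^{itW'} = e^{itW}e^{itD}$ to obtain $\phi'(t) = \IE\{iG e^{itW}(e^{itD}-1)\}$. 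The target ODE is $\phi'(t) = -t\phi(t)$, whose solution is the Gaussian $e^{-t^2/2}$, so the strategy is to show $\phi'(t) + t\phi(t)$ is small. I would write $\phi'(t) + t\phi(t)$ as a sum of terms and control each using a Taylor expansion of $e^{itD}-1 = itD - \tfrac{t^2}{2}D^2 + \dots$; the first-order term $\IE\{iG e^{itW}\cdot itD\} = -t\,\IE\{G e^{itW} D\}$ must be matched against $t\phi(t) = t\,\IE\{e^{itW}\}$. This is precisely where the conditional-expectation hypotheses enter: using $\IE^W(GD)$ and the auxiliary variables to replace $G e^{itW} D$ by something whose expectation is $\phi(t)$ up to controllable error, yielding the defining roles of $r_1, r_2$ (for the first bound) and $r_3, r_4, \~D, S, W''$ (for the second, sharper bound where a further order of Taylor expansion is carried out).

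More concretely, for \eq{6} I would derive a differential inequality of the form $\abs{\phi'(t) + t\phi(t)} \leq a\abs{t}\cdot\text{(error)} + b\,t^2\cdot\text{(error)}$, where the $r_1$ term arises from the quadratic Taylor remainder $\IE\abs{GD^2}$ and the $r_2$ term captures the variance of the conditional expectation $\IE^W(G(e^{itD}-1))$ through the Cauchy–Schwarz-controlled fluctuation of $G e^{itW}(e^{itD}-1)$ around its conditional mean. Setting $u(t) = (\phi(t) - e^{-t^2/2})e^{t^2/2}$, the inequality on $\phi' + t\phi$ becomes a bound on $\abs{u'(t)}$, and integrating from $0$ (where $u(0)=0$) gives a pointwise bound on $\abs{\phi(t) - e^{-t^2/2}}$ that grows polynomially in $t$. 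For the second, stronger bound \eq{7}, the extra cancellation from $\IE^W(G\~D) = \IE^W(GD)$ together with the matching $\IE^{W''}(G\~D) = \IE^{W''}S$ and $\IE^W S = 1$ lets me expand one order further (hence $e^{itD}-1-itD$ in $r_4$ and the cross term $\~D(e^{itD'}-1)$), reducing the leading error from $r_1$-type to $r_3$-type quantities.

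The final step is the passage from the characteristic-function bound to the Kolmogorov distance. Here I would invoke a smoothing inequality of the form
\be{
	\sup_x\abs{\IP[W\leq x]-\Phi(x)}
	\leq \frac{1}{\pi}\int_{-T}^{T}\bbabs{\frac{\phi(t)-e^{-t^2/2}}{t}}\,dt + \frac{C}{T},
}
and choose the truncation level $T$ in terms of the error quantities. The logarithmic factor $\log_+(1/(2\~r_1))$ in \eq{6} and the $1/\~r_3$ dependence in \eq{7} are the signatures of optimizing this truncation: the integral of $\abs{\phi(t)-e^{-t^2/2}}/t$ accumulates a logarithm when the integrand behaves like $1/t$ over the range controlled by $r_2$ (respectively $r_4$), while the tail contributes the $1/T$ term which one balances against the error bound evaluated at $T$. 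The explicit numerical constants ($0.38$, $3.05$, $0.64$, etc.) would come from tracking the constants in Esseen's smoothing lemma together with the integrals of the Gaussian-damped polynomial bounds.

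I expect the main obstacle to be \emph{controlling the characteristic function at large $\abs{t}$}. The differential-inequality approach gives good control near the origin, but integrating $\abs{u'(t)}$ out to large $T$ lets the polynomial-in-$t$ errors grow, and the Gaussian damping factor $e^{-t^2/2}$ in $u(t) = (\phi(t)-e^{-t^2/2})e^{t^2/2}$ works against us once $\abs{t}$ is large. The delicate part is choosing the integration/truncation range so that the growth of the characteristic-function error is still beaten by the $1/T$ smoothing term, which is exactly what forces the somewhat unusual logarithmic and reciprocal dependences on the free parameters $\~r_1$ and $\~r_3$; getting the constants and the $\log_+$ structure right, rather than merely order-correct, is the technical crux.
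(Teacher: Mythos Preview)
Your overall architecture is the paper's: derive $\phi'(t)=i\,\IE\{G(e^{itD}-1)e^{itW}\}$ from the Stein coupling, split into a ``mean'' part that perturbs the ODE $\phi'=-t\phi$ and a ``fluctuation'' part controlled by the conditional variance, then feed a pointwise bound on $\phi(t)-e^{-t^2/2}$ into Esseen's smoothing lemma. The identification of $r_1$ with the second-order Taylor remainder and of $r_2$ with $\sqrt{\Var\IE^W(\cdot)}$ via Cauchy--Schwarz is exactly right, and your description of how $\~D$, $S$, $W''$ buy one more order of expansion for \eqref{7} is in the right spirit.

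The gap is precisely at the step you flag as the main obstacle, and your proposed device does not get past it. If you set $u(t)=(\phi(t)-e^{-t^2/2})e^{t^2/2}$ and bound $\abs{u'(t)}=\abs{\phi'(t)+t\phi(t)}\,e^{t^2/2}$, then the ``$a$-part'' of the error, namely $-ta(t)\phi(t)$ with $\abs{a(t)}\le \tfrac12 r_1\abs{t}$, contributes a term of size $r_1 t^2 e^{t^2/2}$ to $\abs{u'}$. After multiplying back by $e^{-t^2/2}$ and dividing by $\abs t$ for the smoothing integral, this yields an integrand that is bounded below by a positive constant times $r_1$ for $\abs t$ of order one and larger; integrating to $T$ gives a contribution of order $r_1 T$, and with $T\asymp 1/\~r_1$ this is an $O(1)$ constant, not the $0.38\,r_1$ in \eqref{6}. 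In other words, using the \emph{unperturbed} integrating factor $e^{t^2/2}$ treats the multiplicative perturbation $a(t)$ as if it were an additive inhomogeneity, and that loses exactly the decay you need.

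The paper's fix (its Lemma~\ref{lem2}) is to keep the multiplicative structure: write $\phi'(t)=-t(1+a(t))\phi(t)+b(t)$ and solve this ODE \emph{exactly} with the perturbed integrating factor $\exp\bigl(\int_0^t s(1+a(s))\,ds\bigr)=e^{t^2/2+\Delta(t)}$, where $\Delta(t)=\int_0^t sa(s)\,ds$. Then $\phi(t)-e^{-t^2/2}$ splits as $R_1(t)+R_2(t)$, where $R_1=e^{-t^2/2-\Delta(t)}-e^{-t^2/2}$ carries the entire $a$-contribution and, crucially, satisfies $\abs{R_1(t)}\le \abs{\Delta(t)}\,e^{-t^2/4}$ on the range $\abs t\le (1-2A_0)/(2A_1)$. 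The extra $e^{-t^2/4}$ damping makes $\int_{\IR}\abs{R_1(t)}/\abs{t}\,dt$ a finite multiple of $A_1=r_1/2$, which is where the $0.38\,r_1$ comes from. The $b$-part $R_2$ is handled as you describe and produces the $r_2(1+2\log_+(\cdot))$ term. For \eqref{7} the same ODE lemma is reused, now with $\abs{a(t)}\le r_3\abs{t}$ and $\abs{b(t)}\le r_4 t^2$; the algebra that reorganises $\phi'+t\phi$ into this form uses the identity $\IE\{(GD-1)e^{itW}\}=\IE\{(G\~D-S)(1-e^{itD'})e^{itW}\}$, which follows from the three conditional-expectation hypotheses.
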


Although the terms~$r_1$ and~$r_2$ in~\eq{6} are much simpler
than the terms~$r_3$ and~$r_4$ in~\eq{7}, the bound~\eq{6} comes at the cost of an additional logarithmic term, so that in order to prove Theorem~\ref{THM1}, we will have to make use of~\eq{7}.

To prove Theorem~\ref{THM2}, we will use the following, classic smoothing lemma due to Esseen; see, for example, \cite[Theorem~1.5.2, p.~27/28]{Ibragimov1971}.

\begin{lemma}\label{lem1} Let~$W$ be a random variable with characteristic function~$\phi(t)=\IE e^{itW}$, and let~$Z$ have a standard normal distribution. Then, for any~$T\nolinebreak>\nolinebreak0$,
\be{
	\sup_{x\in \IR}\abs{\IP[W\leq x]-\IP[Z\leq x]}
	\leq \frac{1}{\pi}\int_{-T}^T \frac{\abs{\phi(t)-e^{-t^2/2}}}{\abs{t}}dt + \frac{24}{\pi\sqrt{2\pi}T}.
}
\end{lemma}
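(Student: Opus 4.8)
The plan is to prove this as the classical Esseen smoothing inequality, combining a Fourier-inversion estimate with a real-variable ``un-smoothing'' argument. Write $F$ for the distribution function of $W$, set $H(x)=F(x)-\Phi(x)$, and let $\Delta=\sup_x\abs{H(x)}$ be the quantity to be bounded; since $F$ and $\Phi$ both tend to $0$ at $-\infty$ and to $1$ at $+\infty$, the function $H$ vanishes at $\pm\infty$. The strategy has three steps: first, smooth $H$ by convolving with a kernel whose characteristic function is supported on $[-T,T]$, so that the smoothed difference can be written as an integral of $\phi(t)-e^{-t^2/2}$ over $[-T,T]$ only; second, bound the sup-norm of the smoothed difference by the integral on the right of the claim; third, recover $\Delta$ from the smoothed difference at the cost of the error term $24/(\pi\sqrt{2\pi}\,T)$.

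For the smoothing I would use the Fej\'er kernel, the probability density $v_T(x)=(1-\cos Tx)/(\pi T x^2)$, whose characteristic function $\hat v_T(t)=(1-\abs{t}/T)_+$ is supported on $[-T,T]$ and bounded by $1$. Let $H_T=H*v_T$. A standard computation --- integration by parts using $H(\pm\infty)=0$, followed by Fourier inversion --- represents $H_T$ as
\be{H_T(x)=-\frac{1}{2\pi}\int_{-T}^T\frac{\phi(t)-e^{-t^2/2}}{it}\bclr{1-\abs{t}/T}e^{-itx}\,dt,}
where the factor $1/t$ arises because $\phi(t)-e^{-t^2/2}$ is the Fourier--Stieltjes transform of $dH$ rather than of $H$ itself. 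Taking absolute values and using $\abs{\hat v_T}\leq 1$ and $\abs{e^{-itx}}=1$ yields immediately
\be{\sup_x\abs{H_T(x)}\leq\frac{1}{2\pi}\int_{-T}^T\frac{\abs{\phi(t)-e^{-t^2/2}}}{\abs{t}}\,dt.}

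The main work --- and the step I expect to be the real obstacle --- is the reverse, purely real-variable estimate controlling $\Delta$ by $\sup_x\abs{H_T(x)}$, since convolution can in principle flatten a sharp peak of $H$. Here I would use the monotonicity of $F$ together with the Lipschitz bound $\Phi'(x)\leq m:=1/\sqrt{2\pi}$. Assuming without loss of generality that $\sup_x(F-\Phi)$ is (nearly) attained at some $x_0$, monotonicity of $F$ gives $H(x_0+s)\geq\Delta-ms$ for $s\geq0$; shifting to the midpoint $x_1=x_0+\Delta/(2m)$ gives $H(x_1+s)\geq\Delta/2-ms$ on the symmetric core $\abs{s}\leq\Delta/(2m)$. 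Evaluating $H_T(x_1)=\int H(x_1+s)v_T(s)\,ds$ and exploiting that $v_T$ is even --- so the odd term $-ms$ integrates to zero over the core --- while bounding the tail via $1-\cos\leq2$, namely $\int_{\abs{s}>\Delta/(2m)}v_T(s)\,ds\leq 8m/(\pi T\Delta)$, I obtain $\Delta\leq 2\sup_x\abs{H_T(x)}+24m/(\pi T)$.

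Combining the two displays with this last inequality and substituting $m=1/\sqrt{2\pi}$ gives exactly
\be{\Delta\leq\frac{1}{\pi}\int_{-T}^T\frac{\abs{\phi(t)-e^{-t^2/2}}}{\abs{t}}\,dt+\frac{24}{\pi\sqrt{2\pi}\,T},}
as claimed. The delicate point throughout is the bookkeeping of constants, so that the tail estimate and the factor $2$ from un-smoothing produce the sharp constant $24$; getting the right extremal-point geometry --- and handling the symmetric case $\sup(\Phi-F)=\Delta$, the jumps of $F$, and the integrability justification in the Fourier step --- is where the care is needed, though each is routine.
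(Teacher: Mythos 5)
Your proof is correct, and there is nothing in the paper to compare it against: the paper does not prove Lemma~\ref{lem1} but quotes it as a classical result, citing Ibragimov and Linnik (1971), Theorem~1.5.2, and your argument is precisely the standard Esseen smoothing proof found in that reference (and in Feller and Petrov). The constant bookkeeping checks out: with $h=\Delta/(2m)$ the tail mass satisfies $\int_{\abs{s}>h}v_T(s)\,ds\leq 4/(\pi Th)=8m/(\pi T\Delta)$, which combined with the core estimate gives $\Delta\leq 2\sup_x\abs{H_T(x)}+24m/(\pi T)$, and together with the inversion bound $\sup_x\abs{H_T}\leq(2\pi)^{-1}\int_{-T}^T\abs{\phi(t)-e^{-t^2/2}}\abs{t}^{-1}dt$ and $m=1/\sqrt{2\pi}$ this reproduces the stated inequality exactly (noting, as you do, that the claim is trivial when the integral diverges, which also covers the integrability caveat near $t=0$).
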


The next lemma is an adapted and more explicit version of what is used implicitly by \cite{Tikhomirov1980} (see in particular  Equation (3.22) therein)

\begin{lemma}\label{lem2} Let~$W$ be an integrable random variable, and assume its characteristic function~$\phi(t)=\IE e^{itW}$ satisfies the differential equation
\be{
	\phi'(t) = -t(1+a(t))\phi(t)+b(t),\qquad t\in\IR,
}
where~$a(t)$ and~$b(t)$ are (possibly complex valued) functions satisfying
\be{
	\abs{a(t)}\leq A_0+ A_1\abs{t},
	\qquad
	\abs{b(t)}\leq B_0 + B_1\abs{t} + B_2t^2
}
for some non-negative constants~$A_0<1/2$, $A_1$, $B_0$, $B_1$ and~$B_2$. Then, for any~$t\geq 2A_1/(1-2A_0)$,
\bes{\label{8}
	&\sup_{x\in \IR}\abs{\IP[W\leq x]-\IP[Z\leq x]}\\
	&\qquad\leq {\frac{2}{\pi}A_0+\frac{4}{3\sqrt{\pi}}A_1 
	+ \frac{\sqrt{\pi}}{2}B_0 
	+\frac{2}{\pi}B_1\bbclr{1+2\log_+\frac{1}{2t}}
	+ \frac{4}{\pi}\frac{B_2}{t} + \frac{24t}{\pi\sqrt{2\pi}}}.
}
\end{lemma}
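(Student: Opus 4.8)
The plan is to solve the characteristic-function ODE explicitly, compare~$\phi$ with the Gaussian characteristic function~$e^{-t^2/2}$, and feed the resulting pointwise bound into Esseen's smoothing inequality (Lemma~\ref{lem1}), choosing the truncation to be the reciprocal of the free parameter so that the residual term becomes~$\frac{24}{\pi\sqrt{2\pi}T}=\frac{24t}{\pi\sqrt{2\pi}}$.

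First I would treat~$\phi'(t)=-t(1+a(t))\phi(t)+b(t)$ as a linear ODE with integrating factor~$e^{t^2/2}$. Since~$\frac{d}{ds}\bclr{e^{s^2/2}\phi(s)}=e^{s^2/2}\bclr{b(s)-sa(s)\phi(s)}$ and~$\phi(0)=1$, integrating from~$0$ to~$t$ gives the exact representation
\be{
	\phi(t)-e^{-t^2/2}=e^{-t^2/2}\int_0^t e^{s^2/2}\bclr{b(s)-sa(s)\phi(s)}\,ds,
}
the Gaussian~$e^{-t^2/2}$ being the solution for~$a\equiv b\equiv0$. The next step removes the factor~$\phi(s)$ inside the integral by writing~$\phi(s)=e^{-s^2/2}+\bclr{\phi(s)-e^{-s^2/2}}$. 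This splits the discrepancy into the contribution of~$b$, the contribution of~$-sa(s)e^{-s^2/2}$ (in which the troublesome weight~$e^{s^2/2}$ cancels, leaving a convergent Gaussian integral), and a feedback term again carrying~$\phi(s)-e^{-s^2/2}$. For the first two pieces I would reduce everything to the elementary integrals~$J_k(t)=e^{-t^2/2}\int_0^t s^k e^{s^2/2}\,ds$, using~$\abs{\phi}\leq1$ where needed and the identities~$J_1(t)=1-e^{-t^2/2}$ and~$J_2(t)=t-J_0(t)$, together with the fact that the Dawson-type integral~$J_0$ stays bounded and decays like~$1/t$. Since the integrand in Lemma~\ref{lem1} is even, it suffices to integrate over~$t>0$; dividing by~$\abs{t}$ and integrating up to~$T=1/t$, the term~$\int_0^T J_1(u)/u\,du$ produces the logarithm~$\log_+(1/(2t))$ attached to~$B_1$, while the~$J_0$- and~$J_2$-integrals produce the bounded and the~$B_2/t$ contributions. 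Crucially, the cancellation in the~$a$-piece is what keeps~$A_0$ and~$A_1$ attached to pure constants rather than to a logarithm or a~$1/t$ factor.

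The main obstacle is the feedback term~$e^{-u^2/2}\int_0^u e^{s^2/2}sa(s)\bclr{\phi(s)-e^{-s^2/2}}\,ds$. A naive Gr\"onwall estimate fails, because the weight~$e^{s^2/2}$ forces the amplification factor~$\exp\bclr{\int_0^u s\abs{a}}$ to blow up over the range~$[0,1/t]$; and merely inserting~$\abs{\phi(s)-e^{-s^2/2}}\leq2$ is too lossy, as it would saddle~$A_0$ with a spurious logarithm and~$A_1$ with a spurious~$1/t$. Instead I would run a self-consistency argument: bounding~$s\abs{a(s)}\leq A_0 s+A_1 s^2$ and using~$J_1<1$ and~$J_2(u)<u\leq T$, the feedback contracts~$\sup_{[0,T]}\abs{\phi-e^{-\cdot^2/2}}$ by a factor at most~$2A_0 J_1+2A_1 J_2<2A_0+2A_1 T$, which the hypothesis~$t\geq 2A_1/(1-2A_0)$ — equivalently~$2A_0+2A_1T\leq1$ for~$T=1/t$ — together with~$A_0<1/2$ keeps strictly below~$1$. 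This lets the feedback be absorbed into the remaining terms without degrading their orders.

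Finally I would collect all contributions with their explicit constants and add the tail~$\frac{24t}{\pi\sqrt{2\pi}}$ from Lemma~\ref{lem1} to obtain the asserted bound. The delicate point throughout is the constant bookkeeping, since the estimate is meant to be fully explicit; the feedback analysis of the previous paragraph is where the interplay between~$A_0<1/2$ and the truncation constraint~$t\geq2A_1/(1-2A_0)$ earns its keep.
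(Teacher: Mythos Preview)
Your approach is viable but takes a genuinely different route from the paper, and the difference is instructive. You apply only the partial integrating factor $e^{t^2/2}$, which leaves $\phi$ inside the integral and forces you into a feedback/contraction argument. The paper instead uses the \emph{full} integrating factor $\exp\bigl(\int_0^t -u(1+a(u))\,du\bigr)=\exp\bigl(-t^2/2-\Delta(t)\bigr)$ with $\Delta(t)=\int_0^t u\,a(u)\,du$, so that the solution formula
\[
\phi(t)=\exp\Bigl(-\tfrac{t^2}{2}-\Delta(t)\Bigr)+\exp\Bigl(-\tfrac{t^2}{2}-\Delta(t)\Bigr)\int_0^t \exp\Bigl(\tfrac{u^2}{2}+\Delta(u)\Bigr)b(u)\,du
\]
contains no $\phi$ on the right-hand side and hence no feedback at all. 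The discrepancy $\phi(t)-e^{-t^2/2}$ then splits cleanly into $R_1=e^{-t^2/2-\Delta(t)}-e^{-t^2/2}$ (controlled by the mean-value estimate $|\Delta(t)|\leq \tfrac{A_0}{2}t^2+\tfrac{A_1}{3}|t|^3$) and $R_2$ (the $b$-integral). The range restriction $|t|\leq T=(1-2A_0)/(2A_1)$ then enters simply to guarantee $|\Delta(t)-\Delta(u)|\leq\tfrac14(t^2-u^2)$, so that the exponential weights can be replaced by $e^{-t^2/4}$, after which the constants fall out of explicit Gaussian integrals.

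Your contraction argument does close under the stated hypothesis (the contraction factor is at most $A_0+A_1T\leq\tfrac12$, not $2A_0+2A_1T$ as you wrote), so the orders are right; but the absorption step multiplies the explicit pieces by $(1-\text{contraction})^{-1}$, which makes recovering the precise constants $\tfrac{2}{\pi},\tfrac{4}{3\sqrt\pi},\tfrac{\sqrt\pi}{2},\dots$ awkward. The ``delicate constant bookkeeping'' you flag is largely self-inflicted: by using the full integrating factor the paper sidesteps the feedback entirely and reads the constants off directly.
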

\begin{proof} The ODE
\be{
	\phi'(t) = \tilde{a}\phi(t)+b(t),\qquad t\in\IR,
}
with boundary condition~$\phi(0)=1$ has solution
\bes{
	\phi(t) 
	& = {\exp\bbclr{\int_0^t \~a(u)du}+\int_0^t \exp\bbclr{\int_u^t \~a(v)dv}b(u)du}.
}
Hence, for~$\~a(t)=-t-ta(t)$ and with
\be{
	\Delta(t) = \int_0^t ua(u)du,
}
we have
\bes{
	\phi(t) &= \exp\bbclr{-\frac{t^2}{2}-\Delta(t)}
						+ \exp\bbclr{-\frac{t^2}{2}-\Delta(t)}\int_0^t \exp\bbclr{\frac{u^2}{2}+\Delta(u)}b(u)du \\
	& = \exp\bbclr{-\frac{t^2}2} + R_1(t) + R_2(t),
}
where
\ba{
	R_1(t) & =\exp\bbclr{-\frac{t^2}{2}-\Delta(t)}-\exp\bbclr{-\frac{t^2}{2}},\\
	R_2(t)&={\exp\bbclr{-\frac{t^2}{2}-\Delta(t)}\int_0^t \exp\bbclr{\frac{u^2}{2}+\Delta(u)}b(u)du}.
}
Using 
\ben{\label{9}
	e^{x+h} -  e^x= h\int_0^1 e^{x+hs}ds,
}
we write
\be{
	R_1(t) = -\Delta(t)\int_{0}^1 \exp\bbclr{-\frac{t^2}{2}-s\Delta(t)}ds.
}
Using the bound on~$a(t)$,
\be{
	\abs{\Delta(t)}
	\leq \bbbabs{\int_0^t u a(u) du} 
	\leq \frac{A_0}{2}t^2+\frac{A_1}{3}\abs{t}^3 .
}
Since
\bg{
\frac{A_0}{2}+\frac{A_1}{3}\abs{t}\leq \frac{1}{4}
\quad\iff\quad 
\abs{t}\leq \frac{3(1-2A_0)}{4A_1} =: T_1,
}
we have
\be{
	-\frac{t^2}{2} - s \Delta(t) \leq -\frac{t^2}{4} \qquad\text{for~$\abs{t}\leq T_1$}
}
for all~$0\leq s\leq 1$. This yields 
\be{
	\abs{R_1(t)}=\bbbabs{\Delta(t)\int_{0}^1 \exp\bbclr{-\frac{t^2}{2}-s\Delta(t)}ds} 
	\leq \bbclr{\frac{A_0}{2}t^2+\frac{A_1}{3}\abs{t}^3}\exp\bbclr{-\frac{t^2}{4}}\quad\text{for~$\abs{t}\leq T_1$}.
}
In order to bound~$R_2(t)$, write
\be{
	R_2(t) = {\int_0^t \exp\bbclr{-\bbclr{\frac{t^2}{2}-\frac{u^2}{2}}-(\Delta(t)-\Delta(u))}b(u)du}.
}
Whenever~$0\leq u\leq t$ or~$t\leq u\leq 0$, we have
\be{
	\babs{\Delta(t)-\Delta(u)}
		=\bbbabs{\int_u^t va(v)dv}
		\leq (A_0+A_1\abs{t})\bbbabs{\int_u^t vdv}
		\leq \frac{A_0+A_1\abs{t}}{2}(t^2-u^2) .
}
Thus, since
\bg{
	\frac{A_0+A_1\abs{t}}{2}\leq \frac{1}{4} 
	\quad\iff\quad
	\abs{t}\leq \frac{1-2A_0}{2A_1} =: T_2 ,
}
we obtain 
\be{
\abs{R_2(t)} 
	 \leq \exp\bbclr{-\frac{t^2}{4}}\bbbabs{\int_0^t \abs{b(u)}\exp\bbclr{\frac{u^2}{4}}du}\qquad\text{for~$\abs{t}\leq T_2$.}
}
Applying the bound on~$\abs{b(t)}$, this yields 
\bes{
	\abs{R_2(t)} 
	& \leq B_0\exp\bbclr{-\frac{t^2}{4}}\int_0^{\abs{t}}\exp\bbclr{\frac{u^2}{4}}du
	+ B_1\exp\bbclr{-\frac{t^2}{4}}\int_0^{\abs{t}} u\exp\bbclr{\frac{u^2}{4}}du\\
	& \qquad+ B_2\exp\bbclr{-\frac{t^2}{4}}\int_0^{\abs{t}} u^2\exp\bbclr{\frac{u^2}{4}}du\\
	& = B_0\abs{F(t/2)}
	+ 2B_1(1-e^{-t^2/4}) + 2B_2\abs{t}(1-e^{-t^2/4})\\
	& \leq B_0\abs{F(t/2)}+2(B_1+B_2\abs{t})\min\{1,t^2/4\},
}	
where~$F(t)=e^{-t^2}\int_0^t e^{u^2}du$ is Dawson's function.
Now, Lemma~\ref{lem1} states that
\be{
	\sup_{x\in \IR}\abs{\IP[W\leq x]-\IP[Z\leq x]}
	\leq \frac{1}{\pi}\int_{-T}^T \frac{\abs{\phi(t)-e^{-t^2/2}}}{\abs{t}}dt + \frac{24}{\pi\sqrt{2\pi}T}.
}
Since~$\abs{\phi(t)-e^{-t^2/2}}\leq \abs{R_1(t)}+\abs{R_2(t)}$, we obtain
\bes{
	&\int_{-T}^T \frac{\abs{\phi(t)-e^{t^2/2}}}{\abs{t}}dt\\
	&\qquad\leq \int_{-T}^T \bbbcls{\bbclr{\frac{A_0}{2}\abs{t}+\frac{A_1}{3}t^2}\exp\bbclr{-\frac{t^2}{4}} 
	+ B_0\frac{\abs{F(t/2)}}{\abs{t}} \\
	&\kern18em+2B_1\frac{\min\{1,t^2/4\}}{\abs{t}} + 2B_2\min\{1,t^2/4\}}dt\\
	&\qquad\leq \int_{-\infty}^\infty \bbbcls{\bbclr{\frac{A_0}{2}\abs{t}+\frac{A_1}{3}t^2}\exp\bbclr{-\frac{t^2}{4}} 
	+ B_0\frac{\abs{F(t/2)}}{\abs{t}}}dt\\
	&\kern15.7em+\int_{-T}^T \bbbcls{2B_1\frac{\min\{1,t^2/4\}}{\abs{t}} + 2B_2\min\{1,t^2/4\}}dt\\
	&\qquad\leq 2A_0+\frac{4\sqrt{\pi}}{3}A_1 
	+ \frac{\pi^{3/2}}{2}B_0+2B_1(1+2\log_+(T/2)) + 4B_2T,
}
as long as~$T\leq T_1\wedge T_2 = T_2$. 
Hence, for any~$T\leq T_2 = (1-2A_0)/(2A_1)$,
\bes{
	&\sup_{x\in \IR}\abs{\IP[W\leq x]-\IP[Z\leq x]}\\
	&\qquad\leq {\frac{2}{\pi}A_0+\frac{4}{3\sqrt{\pi}}A_1 
	+ \frac{\sqrt{\pi}}{2}B_0 
	+\frac{2}{\pi}B_1(1+2\log_+(T/2))
	+ \frac{4}{\pi}B_2T + \frac{24}{\pi T\sqrt{2\pi}}},
}
which, after replacing~$T$ by~$1/t$, proves the claim.
\end{proof}

\begin{proof}[Proof of Theorem~\ref{THM2}] Applying~\eq{5} to~$f(x) = e^{itx}$ we have
\besn{\label{10}
	\phi'(t) & = i\IE\clc{Wf(W)} = i\IE\bclc{G(e^{itD}-1)e^{itW}}\\
	& = i\IE\bclc{G(e^{itD}-1)}\phi(t)+i\IE\bclc{\bclr{G(e^{itD}-1)-\IE\bclc{G(e^{itD}-1)}}e^{itW}} \\
	& = -t\bclr{1+a(t)}\phi(t)+b(t)
}
with
\be{
	a(t) = \frac{\IE\bclc{G(e^{itD}-1)}}{it}-1,
	\qquad
	b(t) = i\IE\bclc{\bclr{G(e^{itD}-1)-\IE\bclc{G(e^{itD}-1)}}e^{itW}}.
}
Since~$\IE(GD)=1$, we have
\be{
	a(t) = \frac{\IE\bclc{G(e^{itD}-1-itD)}}{it},
}
and thus,
\be{
	\abs{a(t)}\leq \frac{t}{2}\IE\abs{GD^2}=0.5r_1\abs{t} =: A_1\abs{t}.
}
Moreover,
\bes{
	\abs{b(t)} & \leq \IE\babs{\bclr{\IE^W\bclc{G(e^{itD}-1)}-\IE\bclc{G(e^{itD}-1)}}}	\\ 
	&\leq \sqrt{\Var\IE^W\bclc{G(e^{itD}-1)}}
	\leq r_2\abs{t}=: B_1\abs{t}.
}	
The first claim now follows from Lemma~\ref{lem2} after some straightforward simplifications.

In order to obtain the second claim, add and subtract~$t\IE\bclc{(GD-1)e^{itW}}$ in~\eq{10}, so that
\bes{
	\phi'(t) 
	& = i\IE\bclc{G(e^{itD}-1)}\phi(t)
	- t\IE\bclc{\clr{GD-1}e^{itW}} \\
	& \qquad +i\IE\bclc{\bclr{G(e^{itD}-1-itD)-\IE\bclc{G(e^{itD}-1-itD)}}e^{itW}}.
}
Using the conditions posed on~$\~D$, $S$ and~$W''$,
\bes{
	\IE\bclc{\clr{GD-1}e^{itW}}
	& = \IE\bclc{\bclr{G\~D-S}e^{itW}} \\
	& = \IE\bclc{\bclr{G\~D-S}\bclr{e^{itW}-e^{itW''}}} \\
	& = \IE\bclc{\bclr{G\~D-S}\bclr{1-e^{itD'}}e^{itW}},
}
so that now~$\phi'(t) = -t(1+a(t))\phi(t) + b(t)$ holds with
\ba{
	a(t) & = \frac{\IE\bclc{G(e^{itD}-1-itD)}}{it} - \IE\bclc{G\~D\bclr{e^{itD'}-1}} + \IE\bclc{S\bclr{e^{itD'}-1}} \\
	b(t) & = i\IE\bclc{\bclr{G\bclr{e^{itD}-1-itD}-\IE\bclc{G(e^{itD}-1-itD)}}e^{itW}} \\
	&\qquad + t\IE\bclc{\bclr{G\~D\bclr{e^{itD'}-1}-\IE\bclc{G\~D\bclr{e^{itD'}-1}}}e^{itW}} \\
	&\qquad - t\IE\bclc{\bclr{S\bclr{e^{itD'}-1}-\IE\bclc{S\bclr{e^{itD'}-1}}}e^{itW}},
}
and we have
\ba{
	\abs{a(t)} & \leq t\bclr{\ahalf\IE\abs{GD^2} + \IE\abs{G\~DD'}+\IE\abs{SD'}} = r_3\abs{t} =: A_1\abs{t},\\
	\abs{b(t)} & \leq t^2\bbclr{t^{-2}\sqrt{\Var\IE^W\bclc{G(e^{itD}-1-itD)}} 
	+ t^{-1} \sqrt{\Var\IE^W\bclc{G\~D\bclr{e^{itD'}-1}}}\\
	&\qquad\quad+ t^{-1} \sqrt{\Var\IE^W\bclc{S\bclr{e^{itD'}-1}}} \,} 
	= r_4 t^2 =: B_2t^2.
}
The second claim now follows again from Lemma~\ref{lem2} after some straightforward simplifications.
\end{proof}

\section{Proof of Theorem~\ref{THM1}}

We first need some technical results. Recall that a collection of random variables~$X =(X_1,\dots,X_n)$ is said to be \emph{associated}, if for any two coordinate-wise non-decreasing functions~$f,g:\IR^m\to\IR$, we have
\be{
	\IE f(X)\IE g(X)\leq \IE\clc{f(X)g(X)}
}
whenever the expectations exist.

\begin{lemma}[\cite{Esary1967}]\label{lem3}(1) A collection of independent random variables is associated. (2) Non-decreasing functions of associated random variables are associated. 
\end{lemma}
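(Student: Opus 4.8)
The plan is to prove the two parts separately, with part~(2) being essentially immediate and part~(1) following by induction on the number of variables. For part~(2), suppose~$X$ is associated and set~$Y_j=h_j(X)$ for coordinate-wise non-decreasing~$h_j$, $j=1,\dots,k$. Given non-decreasing~$f,g\colon\IR^k\to\IR$, the compositions~$f\circ(h_1,\dots,h_k)$ and~$g\circ(h_1,\dots,h_k)$ are again coordinate-wise non-decreasing, since if~$x\leq x'$ coordinate-wise then~$h_j(x)\leq h_j(x')$ for every~$j$, and applying the outer non-decreasing function preserves the order. Applying the definition of association for~$X$ to these two compositions gives exactly~$\IE f(Y)\IE g(Y)\leq\IE\clc{f(Y)g(Y)}$, which is the assertion for~$Y$.

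For part~(1), I would first establish the base case~$n=1$: a single random variable is associated. The standard device is to introduce an independent copy~$X'$ of~$X$ and to observe that, because~$f$ and~$g$ are both non-decreasing, the two factors of~$(f(X)-f(X'))(g(X)-g(X'))$ always share the same sign, so this product is non-negative pointwise. Taking expectations, expanding the four terms, and using that~$X$ and~$X'$ are independent and identically distributed collapses everything into~$2\IE\clc{f(X)g(X)}-2\IE f(X)\IE g(X)\geq0$, which is the one-dimensional association inequality.

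The inductive step is where the real work lies. Assuming the claim for any~$n-1$ independent variables, let~$X_1,\dots,X_n$ be independent and fix non-decreasing~$f,g\colon\IR^n\to\IR$. Holding the last coordinate at~$x_n$ and applying the induction hypothesis to~$X_1,\dots,X_{n-1}$ gives, for each~$x_n$,
\be{
	\IE\clc{f(X_1,\dots,X_{n-1},x_n)g(X_1,\dots,X_{n-1},x_n)}\geq F(x_n)G(x_n),
}
where~$F(x_n)=\IE f(X_1,\dots,X_{n-1},x_n)$ and~$G(x_n)=\IE g(X_1,\dots,X_{n-1},x_n)$. The crucial observation is that~$F$ and~$G$ are themselves non-decreasing in~$x_n$, because integrating over the first~$n-1$ coordinates a function that is non-decreasing in its last argument preserves that monotonicity. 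Taking expectations over~$X_n$ in the display above and then applying the base case to the single variable~$X_n$ with the non-decreasing functions~$F,G$ yields
\be{
	\IE\clc{f(X)g(X)}\geq\IE\clc{F(X_n)G(X_n)}\geq\IE F(X_n)\,\IE G(X_n)=\IE f(X)\,\IE g(X),
}
where the final equality uses the independence of~$X_n$ from~$X_1,\dots,X_{n-1}$. The main obstacle is organising this two-stage argument correctly: one applies the induction hypothesis \emph{conditionally} on~$X_n$ and the one-dimensional base case to~$X_n$ \emph{itself}, and the whole chain hinges on verifying that the conditional expectations~$F$ and~$G$ remain monotone so that the base case is applicable. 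Once that monotonicity is in hand, the two inequalities chain together directly to close the induction.
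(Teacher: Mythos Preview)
Your argument is correct and is essentially the classical proof from \cite{Esary1967}. Note, however, that the paper does not supply its own proof of this lemma: it is stated with attribution to \cite{Esary1967} and used as a black box, so there is no in-paper proof to compare against. Your write-up could therefore serve as a self-contained justification if one were desired.
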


Let~$[n]:=\{1,\dots,n\}$ denote the set of vertex labels in~$G(n,p)$. For~$1\leq i < j\leq n$, let~$I_{ij}$ be the indicator that there is an edge between vertices~$i$ and~$j$. Let
\be{
	\cE = \bclc{e=\{e_1,e_2\}\subset[n]\,:\,\abs{e}=2},\qquad
	\cT = \bclc{v=\{v_1,v_2,v_3\}\subset[n]\,:\,\abs{v}=3};
}
the first set represent the set of pairs of vertices and the second the set of triples of vertices. We will assume throughout that, in the representations~$e=\{e_1,e_2\}$ and~$v=\{v_1,v_2,v_3\}$, the elements appear in ascending order; we will also use the notations~$I_{e}$ and~$I_{e_1e_2}$ interchageably.
For each~$v\in \cT$, let~$X_{v}=I_{v_1v_2}I_{v_1v_3}I_{v_2v_3}-p^3$ be the centred indicator that there is a triangle between the vertices in~$v$. For concrete indices, such as~$v=\{1,2,3\}$, we will simply write~$X_{123}$ instead of~$X_{\{1,2,3\}}$.

As a direct consequence from Lemma~\ref{lem3} we have the following. 

\begin{corollary}\label{cor1}
(1) For any~$E\subset\cE$, the random variables~$(X_{v})_{v\in\cT}$, given~$(I_{e})_{e\in E}$, are associated. (2) For any~$T_1,T_2\subset \cT$ and any~$E\subset\cE$ the random variables~$\sum_{v\in T_1}X_v$ and~$\sum_{v\in T_2}X_v$, given~$(I_{e})_{e\in E}$, are associated. 
\end{corollary}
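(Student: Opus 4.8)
The plan is to derive both parts of Corollary~\ref{cor1} directly from Lemma~\ref{lem3}, exploiting the fact that conditioning on the edge indicators $(I_e)_{e\in E}$ simply fixes some of the underlying Bernoulli variables while leaving the remaining ones independent. First I would set up the probability space explicitly: the full collection of edge indicators $(I_e)_{e\in\cE}$ is an independent family, so by part~(1) of Lemma~\ref{lem3} it is associated. Conditioning on $(I_e)_{e\in E}$ for a fixed subset $E\subset\cE$ amounts to fixing those coordinates to their observed values; the conditional law of the remaining edge indicators $(I_e)_{e\in\cE\setminus E}$ is still that of an independent family (since independence is preserved under conditioning on a disjoint independent subfamily), hence still associated by Lemma~\ref{lem3}(1).

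For part~(1), I would observe that each $X_v = I_{v_1v_2}I_{v_1v_3}I_{v_2v_3}-p^3$ is a coordinate-wise non-decreasing function of the edge indicators $(I_e)_{e\in\cE\setminus E}$ (a product of indicators shifted by a constant is non-decreasing in each indicator, and subtracting the constant $p^3$ does not affect monotonicity). Therefore $(X_v)_{v\in\cT}$ is a family of non-decreasing functions of the conditionally-associated variables $(I_e)_{e\in\cE\setminus E}$, and part~(2) of Lemma~\ref{lem3} immediately yields that $(X_v)_{v\in\cT}$ is associated conditionally on $(I_e)_{e\in E}$. One small point to handle is that some $X_v$ may depend on edges in $E$ that have been fixed; but a function of the free variables with some arguments frozen at constants is still coordinate-wise non-decreasing in the free variables, so the argument goes through unchanged.

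For part~(2), the sums $\sum_{v\in T_1}X_v$ and $\sum_{v\in T_2}X_v$ are themselves coordinate-wise non-decreasing functions of $(I_e)_{e\in\cE\setminus E}$, being non-negative (finite) sums of the non-decreasing functions $X_v+p^3$ minus constants. Applying Lemma~\ref{lem3}(2) once more to these two functions gives that the pair is associated conditionally on $(I_e)_{e\in E}$. I expect no serious obstacle here; the only care needed is the bookkeeping that conditioning on an independent subfamily genuinely preserves both independence and the monotonicity structure, which is where I would spend a sentence or two making the reduction precise rather than invoking it tacitly.

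\begin{proof}[Proof of Corollary~\ref{cor1}]
Fix~$E\subset\cE$. The edge indicators~$(I_e)_{e\in\cE}$ form an independent family, so conditioning on~$(I_e)_{e\in E}$ leaves the remaining indicators~$(I_e)_{e\in\cE\setminus E}$ conditionally independent; by Lemma~\ref{lem3}(1), they are conditionally associated.

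(1) For each~$v\in\cT$, the quantity~$X_v=I_{v_1v_2}I_{v_1v_3}I_{v_2v_3}-p^3$ is a product of indicators (with any indicators belonging to~$E$ fixed at their observed values) minus a constant, and hence is a coordinate-wise non-decreasing function of~$(I_e)_{e\in\cE\setminus E}$. By Lemma~\ref{lem3}(2), the family~$(X_v)_{v\in\cT}$ is associated given~$(I_e)_{e\in E}$.

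(2) For~$T_1,T_2\subset\cT$, the sums~$\sum_{v\in T_1}X_v$ and~$\sum_{v\in T_2}X_v$ are finite sums of such non-decreasing functions and are therefore themselves coordinate-wise non-decreasing functions of~$(I_e)_{e\in\cE\setminus E}$. Applying Lemma~\ref{lem3}(2) to these two functions shows that they are associated given~$(I_e)_{e\in E}$.
\end{proof}
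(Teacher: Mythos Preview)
Your proof is correct and follows exactly the approach the paper intends: the paper states the corollary as a direct consequence of Lemma~\ref{lem3} without further argument, and your proof simply spells out the two steps (independent edge indicators remain conditionally independent, hence associated; the $X_v$ and their sums are coordinate-wise non-decreasing functions thereof).
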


\begin{lemma}[{\cite[Lemma~3]{Newman1980}}]\label{lem4} If~$U$ and~$V$ are associated random variables, then, for any complex valued functions~$f$ and~$g$,
\be{
\Cov(f(U),g(V))\leq \norm{f'}\norm{g'}\Cov(U,V),
}
where for complex valued~$f$, $\norm{f} = \sup_{x\in \IR}\abs{f(x)}$.
\end{lemma}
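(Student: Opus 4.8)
The plan is to reduce the estimate to the classical Hoeffding covariance identity, which rewrites a covariance as a double integral against a single scalar kernel whose sign is controlled by association. I take $f$ and $g$ to be absolutely continuous with bounded derivatives and $U,V$ to be square integrable, so that all covariances below are finite; since $f,g$ are complex valued the left-hand side of the asserted inequality is a complex number, and I read the statement (as the later applications require) as the bound on its modulus $\abs{\Cov(f(U),g(V))}$.

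The first and only place association enters is through the nonnegativity of the kernel
\be{
	H(s,t) := \IP[U>s,\,V>t]-\IP[U>s]\,\IP[V>t].
}
Indeed, $\mathbbm{1}\clc{s<U}$ and $\mathbbm{1}\clc{t<V}$ are non-decreasing functions of the associated pair $(U,V)$, so the defining inequality of association gives $H(s,t)=\Cov\bclr{\mathbbm{1}\clc{s<U},\mathbbm{1}\clc{t<V}}\geq0$ for every $s,t$. Expanding $\Cov(U,V)=\tfrac12\IE\clc{(U-U')(V-V')}$ for an independent copy $(U',V')$ and applying Tonelli (licit because $H\geq0$) yields the scalar identity $\int_\IR\int_\IR H(s,t)\,ds\,dt=\Cov(U,V)<\infty$, so that $H\in L^1(\IR^2)$.

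Next I would establish the general covariance identity
\be{
	\Cov(f(U),g(V)) = \int_\IR\int_\IR f'(s)\,g'(t)^*\,H(s,t)\,ds\,dt,
}
where the conjugate on $g'$ is forced by the convention $\Cov(X,Y)=\IE\clc{(X-\IE X)(Y-\IE Y)^*}$. Starting from $\Cov(f(U),g(V))=\tfrac12\IE\clc{\bclr{f(U)-f(U')}\bclr{g(V)-g(V')}^*}$ and inserting the elementary representation
\be{
	f(u)-f(u') = \int_\IR f'(s)\bclr{\mathbbm{1}\clc{s<u}-\mathbbm{1}\clc{s<u'}}\,ds
}
(valid since the integrand is supported on the interval between $u$ and $u'$), together with its conjugate counterpart for $g$, the integrand is dominated by $\norm{f'}\,\norm{g'}\,H\in L^1(\IR^2)$, so Fubini applies; recognising $\tfrac12\IE\clc{\bclr{\mathbbm{1}\clc{s<U}-\mathbbm{1}\clc{s<U'}}\bclr{\mathbbm{1}\clc{t<V}-\mathbbm{1}\clc{t<V'}}^*}=H(s,t)$ then gives the identity. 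The bound is now immediate: since $H\geq0$ and $\abs{g'(t)^*}=\abs{g'(t)}$,
\bes{
	\abs{\Cov(f(U),g(V))}
	& \leq \int_\IR\int_\IR \abs{f'(s)}\,\abs{g'(t)}\,H(s,t)\,ds\,dt \\
	& \leq \norm{f'}\,\norm{g'}\int_\IR\int_\IR H(s,t)\,ds\,dt
	= \norm{f'}\,\norm{g'}\,\Cov(U,V).
}

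I expect no genuine obstacle: the substantive content is the covariance identity, and association is used only to guarantee $H\geq0$, which both makes the kernel pull out of the modulus and turns its total mass into $\Cov(U,V)$. The only points needing mild care are the justification of Fubini --- handled by $H\in L^1$, equivalently the finiteness of $\Cov(U,V)$ --- and the bookkeeping of the complex conjugate, which is harmless since only $\abs{g'}$ survives in the final estimate.
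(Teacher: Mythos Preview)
Your argument is correct and is essentially Newman's own proof: the paper does not supply a proof but merely cites \cite[Lemma~3]{Newman1980}, and what you have written is precisely that lemma's proof via the Hoeffding covariance representation, carried through for complex-valued $f,g$ with the conjugate bookkeeping required by the paper's convention $\Cov(X,Y)=\IE\{(X-\IE X)(Y-\IE Y)^*\}$.

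One small remark on rigour: your Fubini justification ``the integrand is dominated by $\norm{f'}\norm{g'}H$'' is not quite literally true before taking expectations, since the pointwise integrand involves $|\mathbbm{1}\{s<U\}-\mathbbm{1}\{s<U'\}|\,|\mathbbm{1}\{t<V\}-\mathbbm{1}\{t<V'\}|$, whose expectation is not $2H(s,t)$ in general. The clean way to justify the interchange is to note that the triple integral of the modulus over $\Omega\times\IR^2$ is bounded by $\norm{f'}\norm{g'}\,\IE\{|U-U'|\,|V-V'|\}\leq 2\norm{f'}\norm{g'}\sqrt{\Var U\,\Var V}<\infty$ under your square-integrability assumption. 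This is a cosmetic fix; the proof stands.
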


\begin{lemma}[Conditional covariance formula]\label{lem5} For any complex-valued random variables~$U$ and~$V$ we have
\be{
	\Cov(U,V)=\IE\Cov^\cF(U,V) + \Cov(\IE^\cF U,\IE^\cF V).
}
\end{lemma}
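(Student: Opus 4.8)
The plan is to reduce the identity to the tower property of conditional expectation, exactly as in the real-valued case, taking care only that conditional expectation commutes with complex conjugation. First I would record the ``computational'' form of the covariance: expanding the defining product $(U-\IE U)(V-\IE V)^*$ and using linearity of expectation gives
\[
	\Cov(U,V)=\IE(UV^*)-\IE(U)\,\IE(V)^*,
\]
and the same manipulation, carried out under $\IE^\cF$ instead of $\IE$, yields the conditional version
\[
	\Cov^\cF(U,V)=\IE^\cF(UV^*)-\IE^\cF(U)\,\bclr{\IE^\cF(V)}^*.
\]
The only complex-specific points entering here are that $\IE^\cF$ is $\IC$-linear and that it commutes with conjugation, i.e.\ $\IE^\cF(V^*)=\bclr{\IE^\cF(V)}^*$; both follow at once by applying the defining (real) conditional expectation separately to $\mathrm{Re}\,V$ and $\mathrm{Im}\,V$.

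Next I would take the (unconditional) expectation of the conditional covariance. By the tower property $\IE\bclr{\IE^\cF(UV^*)}=\IE(UV^*)$, so that
\[
	\IE\,\Cov^\cF(U,V)=\IE(UV^*)-\IE\bclr{\IE^\cF(U)\,\bclr{\IE^\cF(V)}^*}.
\]
Separately, applying the computational form of the covariance to the pair $\bclr{\IE^\cF U,\IE^\cF V}$ and using $\IE\bclr{\IE^\cF U}=\IE U$ and $\IE\bclr{\IE^\cF V}=\IE V$ gives
\[
	\Cov\bclr{\IE^\cF U,\IE^\cF V}=\IE\bclr{\IE^\cF(U)\,\bclr{\IE^\cF(V)}^*}-\IE(U)\,\IE(V)^*.
\]

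Finally I would add the two displays: the cross term $\IE\bclr{\IE^\cF(U)\,\bclr{\IE^\cF(V)}^*}$ cancels, leaving $\IE(UV^*)-\IE(U)\,\IE(V)^*=\Cov(U,V)$, which is the claim. I do not expect a genuine obstacle here; the computation is routine, and the only place to be careful is the bookkeeping with complex conjugates, which is handled once the two facts above — $\IC$-linearity of $\IE^\cF$ and its commutation with conjugation — are in place. All the manipulations are justified by the integrability implicit in the assumption that the covariances in the statement exist.
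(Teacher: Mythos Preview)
Your argument is correct and is exactly the standard derivation of the law of total covariance, with the necessary care about complex conjugation spelled out. The paper itself does not give a proof of this lemma, treating it as a known identity, so there is nothing to compare against beyond noting that your proof supplies the routine details the author omitted.
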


\begin{lemma}\label{lem6} For any complex-valued random variables~$U$, $V$, $\~V$, $U'$, $V'$ and~$\~V'$, we have
\be{
	\babs{\Cov\bclr{UV,U'V'}} 
	 \leq \babs{\Cov\bclr{U\~V,U'\~V'}}
	+ R
}
where 
\bes{
	R&= \IE\babs{U\clr{V-\~V}U'V'}+\IE\babs{U\clr{V-\~V}}\IE\babs{U'V'} \\ 
	&\quad + \IE\babs{U\~V'U'\clr{V'-\~V'}} + \IE\babs{U\~V}\IE\babs{U'\clr{V'-\~V'}}
}
\end{lemma}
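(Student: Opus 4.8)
The plan is to derive the bound from a single telescoping decomposition together with one elementary covariance estimate. The estimate I would use is that, for any two complex-valued random variables~$X$ and~$Y$ (with the relevant expectations finite),
\be{
	\abs{\Cov(X,Y)} \leq \IE\abs{XY} + \IE\abs{X}\,\IE\abs{Y}.
}
This follows at once from the identity~$\Cov(X,Y) = \IE(XY^*) - \IE X\,(\IE Y)^*$: the triangle inequality gives~$\abs{\Cov(X,Y)}\leq\abs{\IE(XY^*)}+\abs{\IE X}\,\abs{\IE Y}$, and then~$\abs{\IE(XY^*)}\leq\IE\abs{XY^*}=\IE\abs{XY}$ (using~$\abs{Y^*}=\abs{Y}$), together with~$\abs{\IE X}\leq\IE\abs{X}$ and~$\abs{\IE Y}\leq\IE\abs{Y}$.

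Next I would telescope the product, swapping~$V$ for~$\~V$ in the first factor and~$V'$ for~$\~V'$ in the second factor one at a time. Since~$\Cov$ is additive in each of its two arguments, inserting the intermediate pair~$(U\~V,U'\~V')$ yields
\be{
	\Cov(UV,U'V') = \Cov(U\~V,U'\~V') + \Cov\bclr{U(V-\~V),\,U'V'} + \Cov\bclr{U\~V,\,U'(V'-\~V')}.
}
Taking absolute values and applying the triangle inequality then isolates the target term~$\abs{\Cov(U\~V,U'\~V')}$ and leaves the two ``difference'' covariances to be controlled.

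Finally I would apply the elementary estimate to each difference term: once with~$X=U(V-\~V)$ and~$Y=U'V'$, which produces~$\IE\abs{U(V-\~V)U'V'}+\IE\abs{U(V-\~V)}\,\IE\abs{U'V'}$, and once with~$X=U\~V$ and~$Y=U'(V'-\~V')$, which produces~$\IE\abs{U\~V\,U'(V'-\~V')}+\IE\abs{U\~V}\,\IE\abs{U'(V'-\~V')}$. Summing these four contributions gives exactly the quantity~$R$, which completes the argument.

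I do not expect a genuine obstacle here, as the statement is essentially bookkeeping: the two real points requiring care are the passage~$\Cov(X,Y)=\IE(XY^*)-\IE X\,(\IE Y)^*$ with the invocation of~$\abs{Y^*}=\abs{Y}$ at the right moment (so that no complex conjugates survive inside the absolute values of~$R$), and checking that the telescoping is performed using only additivity of~$\Cov$ in each slot — no scalar factors enter — so that the sesquilinear structure of~$\Cov$ does not introduce spurious conjugates into the cross terms.
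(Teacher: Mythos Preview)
Your argument is correct and is precisely the paper's: the same telescoping identity via additivity of~$\Cov$ in each slot, followed by the crude bound $\abs{\Cov(X,Y)}\leq\IE\abs{XY}+\IE\abs X\,\IE\abs Y$ on the two difference covariances. Note only that your derivation produces $\IE\babs{U\~V\,U'(V'-\~V')}$ for the third summand of~$R$, agreeing with the paper's own proof rather than the stated $\IE\babs{U\~V'U'(V'-\~V')}$, which is evidently a typo.
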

\begin{proof} Using the linearity properties of the covariance function, we have 
\be{
	 \Cov\bclr{UV,U'V'} 
	 = \Cov\bclr{U\~V,U'\~V'}
	 + \Cov\bclr{U\clr{V-\~V},U'V'}
   +\Cov\bclr{U\~V,U'\clr{V'-\~V'}},
}
from which the claim easily follows.
\end{proof}

\begin{lemma}\label{lem7} There exist universal constants~$c$ and~$C$ such that for all~$n\geq 3$ and for all~$0<p<1$,
\be{
	cs^2(n,p) \leq \Var T\leq Cs^2(n,p),
}
where
\be{
s^2(n,p)= \begin{cases}
			n^4(1-p)	
				& \text{if~$1/2<p<1$,}\\[2ex]
			n^4p^5
				& \text{if~$n^{-1/2}< p \leq 1/2$,}\\[2ex]
			n^3p^3
				& \text{if~$0<p\leq n^{-1/2}$.}
	\end{cases}
}
\end{lemma}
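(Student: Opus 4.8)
The plan is to compute $\Var T$ \emph{exactly} by expanding it as a double sum of covariances and classifying pairs of triangles by the size of their vertex overlap. Writing $I_v = I_{v_1v_2}I_{v_1v_3}I_{v_2v_3} = X_v+p^3$ for the uncentred triangle indicator on $v\in\cT$, we have $T=\sum_{v\in\cT}I_v$ and hence
\be{
	\Var T = \sum_{v,w\in\cT}\Cov(I_v,I_w).
}
For two triples $v,w\in\cT$, the indicators $I_v$ and $I_w$ are functions of the edge variables \emph{within} $v$ and within $w$ respectively, so they are independent whenever $v$ and $w$ share no edge, i.e.\ whenever $\abs{v\cap w}\leq 1$. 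Thus only two kinds of pairs contribute: the diagonal terms $v=w$, for which $\Cov(I_v,I_v)=p^3-p^6=p^3(1-p^3)$, and the pairs with $\abs{v\cap w}=2$, which share exactly one edge, so that $I_vI_w$ is a product of five distinct edge indicators and $\Cov(I_v,I_w)=p^5-p^6=p^5(1-p)$.

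Counting then gives $\binom{n}{3}$ diagonal pairs and $\binom{n}{2}(n-2)(n-3)$ ordered pairs with $\abs{v\cap w}=2$ (choose the shared pair, then two distinct extra vertices), so that
\be{
	\Var T = \binom{n}{3}p^3(1-p^3) + \binom{n}{2}(n-2)(n-3)\,p^5(1-p).
}
Both summands are non-negative, so $\Var T$ is bounded below by either one and above by their sum. Since $1-p^3=(1-p)(1+p+p^2)\asymp 1-p$ uniformly in $0<p<1$ (as $1+p+p^2\in[1,3]$), and since for $n\geq 4$ one has $\binom{n}{3}\asymp n^3$ and $\binom{n}{2}(n-2)(n-3)\asymp n^4$, it remains only to identify the dominant term in each range of $p$.

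The regime analysis is elementary. For $1/2<p<1$ we have $p^3\asymp p^5\asymp 1$, so the two terms are of order $n^3(1-p)$ and $n^4(1-p)$; the second dominates and $\Var T\asymp n^4(1-p)$. For $p\leq 1/2$ we have $1-p\asymp 1-p^3\asymp 1$, so the terms are of order $n^3p^3$ and $n^4p^5$, with ratio $np^2$. The threshold $p=n^{-1/2}$ is precisely where $np^2=1$: for $n^{-1/2}<p\leq 1/2$ one has $np^2>1$, the second term dominates, and $\Var T\asymp n^4p^5$, while for $0<p\leq n^{-1/2}$ one has $np^2\leq 1$, the first term dominates, and $\Var T\asymp n^3p^3$. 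Matching these against $s^2(n,p)$ yields the claim.

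The only genuine subtlety is the behaviour at small $n$, which is where I expect to have to be careful: the off-diagonal prefactor $\binom{n}{2}(n-2)(n-3)$ \emph{vanishes} at $n=3$ and is only $\asymp n^4$ for $n\geq4$, so the lower bounds in the first two regimes cannot be read off from the off-diagonal term down to $n=3$. This is harmless once unpacked, because at $n=3$ the range $n^{-1/2}<p\leq 1/2$ is empty (indeed $3^{-1/2}>1/2$), so only the first and third regimes occur; there $\Var T=p^3(1-p^3)$ already has the claimed order $\asymp 1-p$ for $1/2<p<1$ and $\asymp p^3$ for $0<p\leq 3^{-1/2}$ directly from the diagonal term (using that $1+p+p^2$ and $1-p^3$ are bounded away from $0$ and $\infty$ on the relevant $p$-ranges). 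For $n\geq4$ the regime analysis above applies verbatim, and the remaining work is purely checking that every $\asymp$ holds with constants uniform in $n\geq3$ and $0<p<1$, which reduces to the uniform boundedness of $1+p+p^2$ and of the two combinatorial prefactors divided by $n^3$ and $n^4$.
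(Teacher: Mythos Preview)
Your proof is correct and takes essentially the same approach as the paper: both derive the exact variance formula (the paper states it as $\binom{n}{3}p^3(1-p)(1+p+p^2+3(n-3)p^2)$, which coincides with your expression after noting $\binom{n}{2}(n-2)(n-3)=3\binom{n}{3}(n-3)$) and then read off the order in each regime. Your treatment is simply more explicit, especially in the regime analysis and the handling of the edge case $n=3$, which the paper leaves implicit.
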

\begin{proof} This follows easily from
\be{
	\Var T 
	 = {n\choose 3}\bclr{p^3(1-p)^3 +3(n-3)p^5(1-p)} 
	 = {n\choose 3}p^3(1-p) \bclr{1+p+p^2 + 3(n-3)p^2}. \qedhere
}
\end{proof}

For the following lemmas, we need some notation. Let~$v_1,\dots,v_k\in\cT$; define
\ben{\label{11}
	M(v_1,\dots,v_k) = \bigcup_{i=1}^k \bclr{\{v_{i,1},v_{i,2}\}\cup\{v_{i,1},v_{i,3}\}\cup\{v_{i,2},v_{i,3}\}}\subset\cE;
}
this represents the set of unique independent edge indicators induced by a collection of vertices. Let
\ben{\label{12}
	\nu_v := \{u\in\cT\,:\, \abs{u\cap v}\geq 2\},
	\qquad
	Y_{v} := \sum_{u\in\nu_v} X_u;
}
this represents the triangle indicators that share at least one edge with~$X_v$ and are thus not independent of~$X_v$. Moreover, for each~$w\in\nu_v$, let
\ben{\label{13}
	\nu_{v,w} = \nu_u\cup\nu_u, 
	\qquad
	Y_{v,w} := \sum_{u\in\nu_v} X_{v,w};
}
this represents the triangle indicators that share at least one edge with either~$X_v$ or~$X_w$ (or both), and are thus not independent of~$(X_v,X_w)$.

\begin{lemma}\label{lem8}Let~$k\geq 1$, and let~$v_1,\dots,v_k\in\cT$. Then
\be{
	\IE\abs{X_{v_1}\cdots X_{v_k}}
	\leq C(k) \min\bclc{1-p,p^m}
}
where~$m=\abs{M(v_1,\dots,v_k)}$.
In addition, let~$w_1,\dots,w_l\in\cT$, $l\geq 1$. Then
\be{
	\Cov\clr{X_{v_1}\cdots X_{v_k},X_{w_1}\cdots X_{w_l}}\leq C(k,l)\min\bclc{1-p,p^m}
}
where~$m=\abs{M(v_1,\dots,v_k,w_1\dots,w_l)}$.
\end{lemma}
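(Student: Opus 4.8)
The plan is to treat the two inequalities separately but to let the first do almost all of the work. For the single-family estimate I would prove the two bounds $\IE\abs{X_{v_1}\cdots X_{v_k}}\leq C(k)(1-p)$ and $\IE\abs{X_{v_1}\cdots X_{v_k}}\leq C(k)p^m$ independently and then take the minimum; for the covariance I would use the elementary estimate $\abs{\Cov(A,B)}\leq \IE\abs{AB}+\IE\abs{A}\,\IE\abs{B}$ and feed both terms back into the first inequality. The only genuinely combinatorial input is an edge-counting inequality, which I flag below as the main obstacle.

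The $p^m$ bound is where that content sits. I would start from the pointwise estimate $\abs{X_{v_i}}\leq I_{v_i}+p^3$, where $I_{v_i}:=X_{v_i}+p^3\in\{0,1\}$ is the uncentred triangle indicator (this is immediate by checking the two values $I_{v_i}\in\{0,1\}$). Expanding the product over all $2^k$ subsets $S\subseteq[k]$ and using that the edge indicators are independent $\Be(p)$ variables gives
\[
\IE\prod_{i=1}^k\abs{X_{v_i}}\leq \IE\prod_{i=1}^k\clr{I_{v_i}+p^3} = \sum_{S\subseteq[k]}p^{3(k-\abs{S})}\,\IE\prod_{i\in S}I_{v_i} = \sum_{S\subseteq[k]}p^{3(k-\abs{S})+m_S},
\]
where $m_S:=\abs{M(v_i:i\in S)}$ counts the distinct edges induced by the sub-collection. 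The key observation is the inequality $3(k-\abs{S})+m_S\geq m$, valid because the triangles indexed by $[k]\setminus S$ contribute at most $3(k-\abs{S})$ edges to $M(v_1,\dots,v_k)$ beyond those already counted in $M(v_i:i\in S)$, i.e. $m\leq m_S+3(k-\abs{S})$. Hence each of the $2^k$ summands is at most $p^m$, yielding $\IE\abs{X_{v_1}\cdots X_{v_k}}\leq 2^k p^m$.

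The $(1-p)$ bound needs no combinatorics: since $X_{v_i}\in\{-p^3,1-p^3\}$ we have $\abs{X_{v_i}}\leq 1$, so retaining a single factor gives $\IE\abs{X_{v_1}\cdots X_{v_k}}\leq \IE\abs{X_{v_1}}=2p^3(1-p^3)\leq 6(1-p)$, using $1-p^3\leq 3(1-p)$. Taking $C(k)=\max\{2^k,6\}$ combines the two estimates into the claimed $C(k)\min\{1-p,p^m\}$.

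Finally, for the covariance set $A=X_{v_1}\cdots X_{v_k}$ and $B=X_{w_1}\cdots X_{w_l}$. The term $\IE\abs{AB}$ is bounded by the first inequality applied to the concatenated family, whose induced edge set is exactly $M(v_1,\dots,v_k,w_1,\dots,w_l)$, giving $\leq C(k+l)\min\{1-p,p^m\}$. For the product term, the first inequality gives $\IE\abs{A}\leq C(k)\min\{1-p,p^{m_v}\}$ and $\IE\abs{B}\leq C(l)\min\{1-p,p^{m_w}\}$, where $m_v,m_w$ are the respective edge counts; the $(1-p)$ branch uses $(1-p)^2\leq 1-p$, while the $p^m$ branch uses the subadditivity $m=\abs{M(v)\cup M(w)}\leq m_v+m_w$ together with $p\leq 1$ to get $p^{m_v}p^{m_w}\leq p^m$. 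Adding the two contributions yields the bound with $C(k,l)=C(k+l)+C(k)C(l)$. I expect the edge-counting inequality $3(k-\abs{S})+m_S\geq m$ to be the only delicate point; everything else is routine bookkeeping.
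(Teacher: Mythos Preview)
Your proof is correct and follows essentially the same approach as the paper: the same pointwise bound $\abs{X_{v_i}}\leq I_{v_i}+p^3$, the same $2^k$-term expansion, the same $(1-p)$ bound via $\IE\abs{X_{v_1}}$, and the same reduction of the covariance to the first part. You are more explicit than the paper about the edge-counting inequality $m\leq m_S+3(k-\abs{S})$, which the paper leaves implicit in the one-line jump to $2^k p^m$, but the argument is the same.
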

\begin{proof} Since all~$X_{v_i}$ are bounded by one, we have, on the one hand, 
\be{
	\IE\abs{X_{v_1}\cdots X_{v_k}}\leq \IE\abs{X_{v_1}} = 2 (1 - p) p^3 (1 + p + p^2) \leq 6(1-p).
}
On the other hand, 
\be{
	\IE\abs{X_{v_1}\cdots X_{v_k}} \leq \sum_{A\subset [k]}
	\IE\bbbclc{\prod_{i\in A}I_{v_{i,1}v_{i,2}}I_{v_{i,1}v_{i,3}}I_{v_{i,2}v_{i,3}}}\prod_{i\in A^c}p^3\leq  2^k p^m.
}
The second part of the lemma is a straightforward consequence of the first part. 
\end{proof}

\begin{lemma}\label{lem9} Let~$v,v'\in\cT$, and let~$w\in\nu_v$ and~$w'\in\nu_{v'}$. Let~$f$ and~$g$ be differentiable, complex-valued functions with~$f(0)=g(0)=0$. Then
\bm{
	\Cov\bclr{X_{v}X_{w}f(Y_{v}),X_{v'}X_{w'}g(Y_{v'})}
	\vee \Cov\bclr{X_{v}X_{w}f(Y_{v,w}),X_{v'}X_{w'}g(Y_{v',w'})} \\
	\leq C\norm{f'}\norm{g'}\min\bclc{n^2(1-p),p^m+np^{m+2}+n^2p^{m+4}},
}
where~$m=\abs{M(v,w,v',w')}$.
\end{lemma}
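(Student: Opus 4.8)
The plan is to reduce, in both covariances, the nonlinear functions~$f$ and~$g$ to linear ones at the cost of a factor~$\norm{f'}\norm{g'}$, and then to estimate the resulting covariances of triangle sums by the combinatorial bounds of Lemma~\ref{lem8}. The two covariances on the left-hand side have the same structure --- each is an anchor product $X_vX_w$ (resp.\ $X_{v'}X_{w'}$) times a Lipschitz function of a sum of $\bigo(n)$ centred triangle indicators that share an edge with an anchor --- so I would treat them in parallel, tracking only the common index set $M=M(v,w,v',w')$. Writing $A=X_vX_wf(Y_v)$ and $B=X_{v'}X_{w'}g(Y_{v'})$ (and analogously for the $Y_{v,w}$ branch), the first step is to condition on $\cF=\sigma(I_e:e\in M)$ and apply the conditional covariance formula of Lemma~\ref{lem5}, writing the covariance as $\IE\Cov^\cF(A,B)+\Cov(\IE^\cF A,\IE^\cF B)$.

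The reason for conditioning on all of~$M$ is that the anchor products become $\cF$-measurable: in the inner term they factor out, leaving $\Cov^\cF(f(Y_v),g(Y_{v'}))$, a conditional covariance of functions of the single scalars $Y_v,Y_{v'}$. Corollary~\ref{cor1} supplies conditional association of $Y_v$ and $Y_{v'}$, and Lemma~\ref{lem4} then gives $\babs{\Cov^\cF(f(Y_v),g(Y_{v'}))}\leq\norm{f'}\norm{g'}\Cov^\cF(Y_v,Y_{v'})$ with $\Cov^\cF(Y_v,Y_{v'})\geq0$. Retaining the prefactor $\abs{X_vX_wX_{v'}X_{w'}}$ and expanding $\Cov^\cF(Y_v,Y_{v'})=\sum_{u\in\nu_v,\,u'\in\nu_{v'}}\Cov^\cF(X_u,X_{u'})$ reduces this term to a sum over pairs $(u,u')$ of expectations of products of six centred triangle indicators, each bounded by Lemma~\ref{lem8}.

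For the outer term $\Cov(\IE^\cF A,\IE^\cF B)$ I would use the crude Lipschitz bound $\babs{\IE^\cF f(Y_v)}\leq\norm{f'}\IE^\cF\sum_{u\in\nu_v}\abs{X_u}$ (and likewise for~$g$) together with the product-splitting Lemma~\ref{lem6} to decouple the small anchor products from the sums, arriving once more at covariances and expectations of products of triangle indicators governed by Lemma~\ref{lem8}. The two target bounds then emerge from one and the same counting applied to these estimates: using the factor $1-p$ from Lemma~\ref{lem8} uniformly and noting that there are $\bigo(n^2)$ pairs $(u,u')$ yields the bound $n^2(1-p)$, whereas tracking the total number of distinct edges shows that a pair $(u,u')$ using $j$ vertices beyond the anchor vertices contributes $\bigo(n^j)$ choices and at least $2j$ new edges, producing the three terms $p^m$, $np^{m+2}$ and $n^2p^{m+4}$ for $j=0,1,2$.

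The main obstacle I anticipate is the bookkeeping in this final counting step rather than any conceptual difficulty: one must enumerate how the summation triangles $u\in\nu_v$ and $u'\in\nu_{v'}$ overlap one another and the four anchors, assign to each configuration the correct power of~$p$ (the number of distinct edges) and of~$n$ (the number of free vertices), and --- crucially --- preserve the smallness of the anchor product throughout. This is exactly why the prefactor is carried inside the expectations and estimated jointly with the remaining indicators through Lemma~\ref{lem8}, rather than bounded by~$1$ prematurely. The $Y_{v,w}$/$Y_{v',w'}$ branch requires no new argument, since $\nu_{v,w}=\nu_v\cup\nu_w$ is again a set of $\bigo(n)$ triangles sharing an edge with an anchor, so it satisfies the identical estimates with the same exponent~$m$.
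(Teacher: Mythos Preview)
Your counting argument in the final paragraph is exactly right and matches the paper's: stratify pairs $(u,u')$ by how many vertices $j\in\{0,1,2\}$ fall outside $v\cup w\cup v'\cup w'$, observe that each new vertex contributes at least two new edges, and invoke Lemma~\ref{lem8}. However, the route you take to get there is considerably more elaborate than what the paper does.

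The paper's proof of Lemma~\ref{lem9} is entirely elementary. It does not condition, does not use association, Newman's inequality (Lemma~\ref{lem4}), the conditional covariance formula (Lemma~\ref{lem5}), or Lemma~\ref{lem6}. Instead it uses only the trivial bound $\babs{\Cov(A,B)}\leq\IE\abs{AB}+\IE\abs{A}\,\IE\abs{B}$ together with the Lipschitz estimate $\abs{f(x)}\leq\norm{f'}\abs{x}$ (valid because $f(0)=0$), which immediately gives
\[
\babs{\Cov\bclr{X_vX_wf(Y_{v,w}),X_{v'}X_{w'}g(Y_{v',w'})}}
\leq \norm{f'}\norm{g'}\bclr{\IE\abs{X_vX_wY_{v,w}X_{v'}X_{w'}Y_{v',w'}}+\IE\abs{X_vX_wY_{v,w}}\,\IE\abs{X_{v'}X_{w'}Y_{v',w'}}}.
\]
From here one expands the $Y$'s and counts exactly as you describe.

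Your conditioning-plus-association scheme is precisely the machinery the paper reserves for Lemmas~\ref{lem10}--\ref{lem12}, where the crude bound above is too weak and one genuinely needs to exploit cancellation (the outer covariance term vanishes in those lemmas by independence of the two conditional $\sigma$-algebras). For Lemma~\ref{lem9} no such cancellation is claimed or needed; the bound $n^2(1-p)$ and $p^m+np^{m+2}+n^2p^{m+4}$ is simply what the crude estimate delivers. Your approach would still work, but the handling of the outer term $\Cov(\IE^\cF A,\IE^\cF B)$ via ``Lemma~\ref{lem6} to decouple'' is vague, and in the end you would just be reconstructing the trivial covariance bound by a circuitous route. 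Recognising that Lemma~\ref{lem9} is the easy case --- and saving the heavier tools for where they are actually required --- would make your write-up both shorter and clearer.
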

\begin{proof} We only prove the bound for~$\Cov\bclr{X_{v}X_{w}f(Y_{v,w}),X_{v'}X_{w'}g(Y_{v',w'})}$, as the proof for~$\Cov\bclr{X_{v}X_{w}f(Y_{v}),X_{v'}X_{w'}g(Y_{v'})}$ is essentially the same. Since~$\abs{f(x)}\leq \norm{f'}\abs{x}$ and~$\abs{g(x)}\leq \norm{g'}\abs{x}$,
\bes{
	&\Cov\bclr{X_{v}X_{w}f(Y_{v,w}),X_{v'}X_{w'}g(Y_{v',w'})}\\
	&\qquad \leq \norm{f'}\norm{g'}\bclr{\IE\abs{X_{v}X_{w}Y_{v,w}X_{v'}X_{w'}Y_{v',w'}}+
	\IE\abs{X_{v}X_{w}Y_{v,w}}\IE\abs{X_{v'}X_{w'}Y_{v',w'}}}.
}
We only show how to bound the first expectation above, since the second expression, the product of expectations, can be bounded analogously. 
From Lemma~\ref{lem8}, the bound~$Cn^2(1-p)$ is straightforward since both~$Y_{v,w}$ and~$Y_{v',w'}$ contain order~$n$ summands. 
Now, let~$u\in\nu_{v,w}$ and~$u'\in\nu_{v',w'}$; note that~$\abs{(u\cup u')\setminus (v\cup w \cup v'\cup w')}\leq 2$, since~$u$ is sharing at least two indices with~$v$ or~$w$, and~$u'$ is sharing at least two vertices with~$v'$ or~$w'$. We can thus distinguish three cases.
\begin{enumerate}
\item ``$\abs{(u\cup u')\setminus (v\cup w \cup v'\cup w')}=0$''. This can happen at most~$2{8 \choose 3}$ times. In this case, $M(v,w,v',w',u,u') \geq M(v,w,v',w')=m$, and so, by Lemma~\ref{lem8},
\be{
	\IE\abs{X_{v}X_{w}X_{u}X_{v'}X_{w'}Y_{u'}} \leq Cp^m.
}
\item ``$\abs{(u\cup u')\setminus (v\cup w \cup v'\cup w')}=1$''; this can happen at most order~$n$ times. In this case, $M(v,w,v',w',u,u')\geq M(v,w,v',w')+2$, and so, by Lemma~\ref{lem8},
\be{
	\IE\abs{X_{v}X_{w}X_{u}X_{v'}X_{w'}Y_{u'}} \leq Cp^{m+2}.
}
\item ``$\abs{(u\cup u')\setminus (v\cup w \cup v'\cup w')}=2$''; this can happen at most order~$n^2$ times. In this case, $M(v,w,v',w',u,u')\geq M(v,w,v',w')+4$, and so, by Lemma~\ref{lem8},
\be{
	\IE\babs{X_{v}X_{w}X_{u}X_{v'}X_{w'}Y_{u'}} \leq Cp^{m+4}.
}
\end{enumerate}
Putting the estimates together yields the claim.
\end{proof}

\begin{lemma}\label{lem10}Let~$v,v'\in\cT$, such that~$\abs{v\cap v'}=1$, and let~$w\in\nu_v$ and~$w'\in\nu_{v'}$ be such that~$\abs{(w\cap w')\setminus\clr{v\cap v'})}=0$. Let~$f$ and~$g$ be differentiable, complex-valued functions with~$f(0)=g(0)=0$. Then
\bm{
	\Cov\bclr{X_{v}X_{w}f(Y_{v}),X_{v'}X_{w'}g(Y_{v'})}
	\vee\Cov\bclr{X_{v}X_{w}f(Y_{v,w}),X_{v'}X_{w'}g(Y_{v',w'})}\\
	\leq C\norm{f'}\norm{g'}\min\bclc{n(1-p),p^{m+1}+np^{m+3}},
}
where~$m=\abs{M(v,w,v',w')}$.
\end{lemma}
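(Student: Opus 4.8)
The plan is to follow the proof of Lemma~\ref{lem9} closely, again establishing only the bound for the pair $(Y_{v,w},Y_{v',w'})$ (the argument for $(Y_v,Y_{v'})$ being identical) and treating the two entries of the minimum separately. The whole gain over Lemma~\ref{lem9} — a factor of roughly $np^2$ — must come from exploiting the two hypotheses geometrically. Because $|v\cap v'|=1$, the cores $v$ and $v'$ are edge-disjoint, so the edge field splits into a ``$v$-cluster'' (the edges of $v$, $w$ and of the triangles summed in $Y_{v,w}$) and a ``$v'$-cluster'', which can interact only through triangles that \emph{bridge} the two clusters; the second hypothesis $|(w\cap w')\setminus(v\cap v')|=0$ keeps $w$ and $w'$ from themselves furnishing a cheap bridge. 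The single most important departure from Lemma~\ref{lem9} is that I would \emph{not} pass to the $L^1$-bound $\abs{f(x)}\leq\norm{f'}\abs{x}$ at the outset, because that step destroys exactly the cancellation responsible for the improvement; instead I keep the covariance, so that configurations in which the two clusters are edge-disjoint — and hence independent — contribute nothing.

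To make this precise I would use Lemma~\ref{lem5} with $\cF$ the $\sigma$-field generated by the edges common to both clusters: conditionally on $\cF$ the two sides are independent, so $\IE\Cov^\cF=0$ and only $\Cov(\IE^\cF(\cdot),\IE^\cF(\cdot))$ survives, which forces at least one shared edge between the clusters. The general functions $f$ and $g$ I would handle with Lemma~\ref{lem6}, replacing $Y_{v,w}$ and $Y_{v',w'}$ by truncated versions from which the bridging triangles have been removed: the leading covariance then pairs edge-disjoint quantities and vanishes, while the remainder $R$ collects precisely the bridging contributions, each of which carries a small factor $X_u$ and is estimated term by term via Lemma~\ref{lem8}, at the cost of $\norm{f'}\norm{g'}$.

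For the polynomial bound I would expand $Y_{v,w}=\sum_{u\in\nu_{v,w}}X_u$ and $Y_{v',w'}=\sum_{u'\in\nu_{v',w'}}X_{u'}$ and classify each pair $(u,u')$ by the number $j=\abs{(u\cup u')\setminus(v\cup w\cup v'\cup w')}$ of fresh vertices, as in the three cases of Lemma~\ref{lem9}. The connectivity forced by a nonempty interface does two things: it suppresses the case $j=2$ (two genuinely fresh vertices leave the clusters edge-disjoint, so the contribution cancels rather than being merely bounded in $L^1$), removing the $n^2p^{m+4}$ term, and it charges one extra shared edge in every surviving configuration, upgrading the $p^m$ and $np^{m+2}$ estimates of Lemma~\ref{lem9} to $p^{m+1}$ and $np^{m+3}$. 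For the $n(1-p)$ bound the same constraint limits the surviving pairs $(u,u')$ to $\bigo(n)$: a bridging triangle has at most one fresh vertex and must meet \emph{both} clusters, which pins it down up to $\bigo(1)$ choices while the partner triangle ranges over $\bigo(n)$ vertices; each surviving term is $\bigo(1-p)$ by Lemma~\ref{lem8}, for a total of $\bigo(n(1-p))$.

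The main obstacle is combining these two threads: I must carry the covariance cancellation through the general $f,g$ simultaneously with the combinatorial bookkeeping, making sure the reductions via Lemmas~\ref{lem5} and~\ref{lem6} do not silently reintroduce the disconnected (and hence $L^1$-sized) configurations, and that the hypothesis $|(w\cap w')\setminus(v\cap v')|=0$ is invoked exactly where it keeps the count of bridges at $\bigo(n)$ rather than $\bigo(n^2)$. Verifying that every surviving configuration genuinely pays the claimed number of edges — so that the exponents are $m+1$ and $m+3$ and not merely $m$ and $m+2$ — is the delicate point, since the shared edges among the fixed cores $v,w,v',w'$ are already counted once in $m$ and one must check that the coupling required for a nonzero covariance supplies a further power of $p$.
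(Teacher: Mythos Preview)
Your overall architecture (Lemma~\ref{lem6} to peel off a few ``bridging'' triangles, then Lemma~\ref{lem5} to separate the two clusters) matches the paper, but one step is wrong and it is exactly the step that carries the proof. You assert that after removing the bridging triangles ``the leading covariance then pairs edge-disjoint quantities and vanishes''. It does not. With, say, $v=\{1,2,3\}$, $v'=\{1,4,5\}$ and $w=v$, $w'=v'$, the paper's truncation removes only the $O(1)$ triangles whose third vertex lies in the opposite core; the truncated sum $\~Y_{v}$ still contains every $X_{\{1,2,k\}}$ with $k\notin\{1,\dots,5\}$, and $\~Y_{v'}$ still contains every $X_{\{1,4,k\}}$. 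These share the edge $\{1,k\}$ for each of the $n-O(1)$ free vertices~$k$, so $\~Y_{v}$ and $\~Y_{v'}$ are \emph{not} edge-disjoint, not independent, and the leading covariance is not zero. Your alternative --- conditioning on all edges common to both clusters --- does kill $\IE\Cov^\cF$, but that $\cF$ contains the $O(n)$ indicators $I_{\{1,k\}}$, and you give no mechanism for bounding $\Cov(\IE^\cF(\cdot),\IE^\cF(\cdot))$ afterwards.

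The missing ingredient is association. The paper applies Lemma~\ref{lem5} with the \emph{small} $\sigma$-field $\cF$ generated by the edges inside $v\cup w$ and inside $v'\cup w'$; the truncation step is designed precisely so that $\IE^\cF f(\~Y_{v,w})$ depends only on the $v$-side sub-$\sigma$-field, which forces $a_1=\Cov(\IE^\cF(\cdot),\IE^\cF(\cdot))=0$. The surviving term $a_2=\IE\{X_vX_wX_{v'}X_{w'}\Cov^\cF(f(\~Y_{v,w}),g(\~Y_{v',w'}))\}$ is then handled by Corollary~\ref{cor1} and Newman's inequality (Lemma~\ref{lem4}): conditional association gives
\[
\bigl|\Cov^\cF\bigl(f(\~Y_{v,w}),g(\~Y_{v',w'})\bigr)\bigr|\leq \|f'\|\,\|g'\|\,\Cov^\cF\bigl(\~Y_{v,w},\~Y_{v',w'}\bigr),
\]
and only now does one expand into the $O(n)$ hinge pairs $(\{e_1,e_2,i\},\{e_1',e_2',i\})$, each contributing $p^{m+3}$. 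Without Lemma~\ref{lem4} there is no way to pass from general Lipschitz $f,g$ to the linear covariance $\Cov(\~Y_{v,w},\~Y_{v',w'})$, and your case analysis in the last paragraph (the suppression of ``$j=2$'', the extra power of $p$) presupposes exactly this reduction.
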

\begin{proof} As in the proof of Lemma~\ref{lem9}, we only consider~$\Cov\bclr{X_{v}X_{w}f(Y_{v,w}),X_{v'}X_{w'}g(Y_{v',w'})}$. Throughout the proof, we suppress the dependence on~$v$, $w$, $v'$ and~$w'$ in many places, since they are fixed. Define the sets
\ba{
	\eta & = \bclc{u\in\nu_{v,w}\setminus\{v,w\}\,:\,\text{$\abs{u\cap(v'\cup w')}=1$}},\\
	\eta' & = \bclc{u\in\nu_{v',w'}\setminus\{v',w'\}\,:\,\text{$\abs{u\cap(v\cup w)}=1$}}
}
(note that, under the conditions imposed on~$v$, $w$, $v'$ and~$w'$, we cannot have~$\abs{u\cap(v'\cup w')}>1$). The set~$\eta$ represents the set of indices of those triangle indicators in~$\nu_{v,w}$ which are not equal to~$v$ and~$w$ and which have one vertex in the set~$v'\cup w'$, and likewise, the set~$\eta'$ represents the set of indices of those triangle indicators in~$\nu_{v',w'}$ which are not equal to~$v'$ and~$w'$ and which have one vertex in the set~$v\cup w$. It is important to note that for each~$u\in \eta\cup\eta'$, we have 
\ben{\label{14}
	\abs{M(v,w,v',w',u)}\geq \abs{M(v,w,v',w')}+1 = m+1.
}
Now, let
\be{
	Z_{v,w} = \sum_{u\in \eta} X_u,\qquad
	Z_{v',w'} = \sum_{u\in \eta'} X_u,
}
and let
\be{
	\~Y_{v,w}  = Y_v - Z_v, 
	\qquad
	\~Y_{v',w'}  = Y_{v'} - Z_{v'}.
}
The sum~$\~Y_{v,w}$ consists of those centred triangle indicators which are~$(i)$ equal to~$X_v$ or~$X_w$, or~$(ii)$ composed entirely of vertices from~$v\cup w$, or~$(iii)$ share one edge with~$X_v$ or~$X_w$, but whose third vertex is not in~$v\cup w\cup v'\cup w'$ (and the analogous statement holds for~$\~Y_{v',w'}$ with cases~$(i')$, $(ii')$ and~$(iii')$). Note that if~$X_u$ is a summand in~$\~Y_{v,w}$ from~$(iii)$ above, and if~$X_{u'}$ is a summand in~$\~Y_{v',w'}$ from~$(iii')$, and if~$u$ and~$u'$ are such that the respective third vertex is equal, then
\ben{\label{15}
	\abs{M(v,w,v',w',u,u')}\geq \abs{M(v,w,v',w')}+3 = m+3.
}
We now apply Lemma~\ref{lem6} with
\ba{
	U&=X_{v}X_{w}, &V&=f(Y_{v,w}), &\~V&=f(\~Y_{v,w}),\\
	U'&=X_{v'}X_{w'}, &V'&=f(Y_{v',w'}), &\~V'&=f(\~Y_{v',w'}).
}
Note that, since~$\abs{f(x)}\leq \norm{f'}\abs{x}$ and~$\abs{g(x)}\leq \norm{g'}\abs{x}$, we have
\be{
	\abs{V-\~V}\leq \norm{f'}\abs{Z_{v,w}},
	\qquad
	\abs{V'-\~V'}\leq \norm{g'}\abs{Z_{v',w'}}.
}
We obtain
\be{
	\babs{\Cov\bclr{X_{v}X_{w}f(Y_{v,w}),X_{v'}X_{w'}g(Y_{v',w'})}}
	 \leq \babs{\Cov\bclr{X_{v}X_{w}f(\~Y_{v,w}),X_{v'}X_{w'}g(\~Y_{v',w'})}} + R,
}
where
\bes{
	R
	&\leq \IE\abs{X_{v}X_{w}Z_{v,w}X_{v'}X_{w'}Y_{v',w'}}+\IE\abs{X_{v}X_{w}Z_{v,w}}\IE\abs{X_{v'}X_{w'}Y_{v',w'}} \\ 
	&\quad + \IE\abs{X_{v}X_{w}\~Y_{v,w}X_{v'}X_{w'}Z_{v',w'}} + \IE\abs{X_{v}X_{w}\~Y_{v,w}}\IE\abs{X_{v'}X_{w'}Z_{v',w'}}.
}
From this,~\eq{14},~\eq{15} and Lemma~\ref{lem8}, it is not difficult to see that
\be{
	R\leq C\min\bclc{n(1-p),p^{m+1}+np^{m+3}}.
}
Now, let 
\ben{\label{15a}
	M^*(v,w) = \bclc{u\in \cT\,:\,u\subset(v\cup w)}
}
and define the~$\sigma$-algebras
\ben{\label{16}
	\cF_1  = \sigma(I_e; e\in M^*(v,w)),
	\qquad
	\cF_2  = \sigma(I_e; e\in M^*(v',w'));
}
note that~$X_{v}X_{w}$ is~$\cF_1$-measurable and~$X_{v'}X_{w'}$ is~$\cF_2$-measurable, and that~$\cF_1$ and~$\cF_2$ are independent of each other. With~$\cF=\sigma(\cF_1,\cF_2)$, we apply Lemma~\ref{lem5} and obtain
\bes{
	\Cov\bclr{X_{v}X_{w}f(\~Y_{v,w}),X_{v'}X_{w'}g(\~Y_{v',w'})}
	& = \Cov\bclr{X_{v}X_{w}\IE^\cF f(\~Y_{v,w}),X_{v'}X_{w'}\IE^\cF g(\~Y_{v',w'})}\\
	&\quad+\IE\Cov^\cF\bclr{X_{v}X_{w}f(\~Y_{v,w}),X_{v'}X_{w'}g(\~Y_{v',w'})}\\
	& =: a_1 + a_2.
}
Now, by construction of~$\~Y_{v,w}$ and~$\~Y_{v',w'}$, we have~$\IE^\cF f(\~Y_{v,w}) = \IE^{\cF_1} f(\~Y_{v,w})$ and~$\IE^\cF g(\~Y_{v',w'}) = \IE^{\cF_2} g(\~Y_{v',w'})$, from which~$a_1=0$ is immediate. In order to bound~$a_2$, we write
\be{
	\Cov^\cF\bclr{X_{v}X_{w}f(\~Y_{v,w}),X_{v'}X_{w'}g(\~Y_{v',w'})}
	 = X_{v}X_{w}X_{v'}X_{w'}\Cov^\cF\bclr{f(\~Y_{v,w}),g(\~Y_{v',w'})}.
}
Using Corollary~\ref{cor1} and Lemma~\ref{lem4}, we have
\be{
	\babs{\Cov^\cF\bclr{f(\~Y_{v,w}),g(\~Y_{v',w'})}}
	\leq \norm{f'}\norm{g'}\Cov^\cF\bclr{\~Y_{v,w},\~Y_{v',w'}}.
}
We now can write
\bes{
	\Cov^\cF\bclr{\~Y_{v,w},\~Y_{v',w'}} 
	& = \sum_{\substack{e\in M(v,w), \\ e'\in M(v',w')}}\sum_{i\in [n]\setminus(v\cup w\cup v' \cup w')}
	\Cov^\cF\bclr{X_{e_1e_2i},X_{e_1'e_2'i}} \\
	& \quad + \sum_{\substack{e\in M(v,w), \\ e'\in M(v',w')}}\sum_{\substack{i,j\in [n]\setminus(v\cup w\cup v' \cup w'),\\ i\neq j}}
	\Cov^\cF\bclr{X_{e_1e_2i},X_{e_1'e_2'j}}.
}
From the assertions we conclude that~$e\in M(v,w)$ and~$e'\in M(v',w')$ implies~$e\neq e'$. Hence, we can use independence to find that the covariances in the second sum all vanish, and moreover, that~$M(v,w,v',w',\clc{e_1,e_2,i},\clc{e_1',e_2',i})\geq M(v,w,v',w')+3$. Thus, Lemma~\ref{lem8} yields
\bm{
	a_2\leq\IE\bbbabs{X_vX_wX_{v'}X_{w'}\sum_{\substack{e\in M(v,w), \\ e'\in M(v',w')}}\sum_{i\in [n]\setminus(v\cup w\cup v' \cup w')}
	\Cov^\cF\bclr{X_{e_1e_2i},X_{e_1'e_2'i}}}\\
	\leq C \min\{n(1-p),np^{m+3}\}.
}
Collecting all the estimates gives the final bound. 
\end{proof}

\begin{lemma}\label{lem11} Let~$v,v'\in\cT$ be such that~$\abs{v\cap v'}=0$, and let~$w\in\nu_v$ and~$w'\in\nu_{v'}$ be such that~$\babs{(v\cup w)\cap (v'\cup w')}=0$. Let~$f$ and~$g$ be differentiable, complex-valued functions with~$f(0)=g(0)=0$. Then
\bm{
	\Cov\bclr{X_{v}X_{w}f(Y_{v}),X_{v'}X_{w'}g(Y_{v'})}
	\vee\Cov\bclr{X_{v}X_{w}f(Y_{v,w}),X_{v'}X_{w'}g(Y_{v',w'})}\\
	\leq C\norm{f'}\norm{g'}\min\bclc{1-p,p^{m+3}},
}
where~$m=\abs{M(v,w,v',w')}$.
\end{lemma}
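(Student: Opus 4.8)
The plan is to mirror the conditioning-and-truncation strategy of Lemma~\ref{lem10}, but to exploit the full disjointness $\babs{(v\cup w)\cap(v'\cup w')}=0$ by carrying out a \emph{double} truncation, which is what sharpens the exponent from $m+2$ to $m+3$. As in the previous lemmas I would treat only $\Cov\bclr{X_vX_wf(Y_{v,w}),X_{v'}X_{w'}g(Y_{v',w'})}$, the argument for the $Y_v,Y_{v'}$ version being analogous. Since $v\cup w$ and $v'\cup w'$ are disjoint, the only triangles in $\nu_{v,w}$ that are not independent of the $v'$-side are the ``bridges'' $u=\clc{a,b,c}$ with $\clc{a,b}\in M(v,w)$ and third vertex $c\in v'\cup w'$; I would collect these into a set $\eta$ and set $Z_{v,w}=\sum_{u\in\eta}X_u$ and $\~Y_{v,w}=Y_{v,w}-Z_{v,w}$, and symmetrically define $\eta'$, $Z_{v',w'}$ and $\~Y_{v',w'}$. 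Both bridge sets contain only $\bigo(1)$ elements. Writing $f(Y_{v,w})=f(\~Y_{v,w})+\Delta_f$ and $g(Y_{v',w'})=g(\~Y_{v',w'})+\Delta_g$ with $\Delta_f:=f(Y_{v,w})-f(\~Y_{v,w})$ and $\Delta_g:=g(Y_{v',w'})-g(\~Y_{v',w'})$, and expanding the covariance by additivity in each argument, produces four terms.

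The heart of the argument is that three of these four terms vanish identically by edge-disjointness. The variable $X_vX_wf(\~Y_{v,w})$ depends only on the edges inside $v\cup w$ and the edges joining $v\cup w$ to vertices outside $v\cup w\cup v'\cup w'$, whereas $X_{v'}X_{w'}g(\~Y_{v',w'})$ and $X_{v'}X_{w'}\Delta_g$ depend only on edges incident to $v'\cup w'$ (the latter also on the cross edges running from $v'\cup w'$ into $v\cup w$). Since $v\cup w$ and $v'\cup w'$ are disjoint, in each case the two edge sets are disjoint, so the pairs are independent, and the covariances $\Cov\bclr{X_vX_wf(\~Y_{v,w}),X_{v'}X_{w'}g(\~Y_{v',w'})}$ and $\Cov\bclr{X_vX_wf(\~Y_{v,w}),X_{v'}X_{w'}\Delta_g}$ both vanish; the third, symmetric term vanishes for the same reason. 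I would certify these statements by the same bookkeeping of edge-supports used implicitly in Lemma~\ref{lem10}.

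Consequently the covariance equals the single remaining term $\Cov\bclr{X_vX_w\Delta_f,X_{v'}X_{w'}\Delta_g}$, in which \emph{both} factors carry a bridge. Bounding it by absolute values and using $\abs{\Delta_f}\leq\norm{f'}\abs{Z_{v,w}}$ and $\abs{\Delta_g}\leq\norm{g'}\abs{Z_{v',w'}}$ reduces the problem to estimating $\IE\babs{X_vX_wZ_{v,w}X_{v'}X_{w'}Z_{v',w'}}$ and the product $\IE\abs{X_vX_wZ_{v,w}}\,\IE\abs{X_{v'}X_{w'}Z_{v',w'}}$. The decisive combinatorial point is that a $v$-bridge $u\in\eta$ contributes the cross edges $\clc{a,c},\clc{b,c}$ and a $v'$-bridge $u'\in\eta'$ the cross edges $\clc{a',d},\clc{b',d}$, and at most one equality can hold among these four, so $\abs{M(v,w,v',w',u,u')}\geq m+3$; together with $\abs{\eta},\abs{\eta'}=\bigo(1)$ and Lemma~\ref{lem8} this bounds the joint expectation by $C\min\clc{1-p,p^{m+3}}$. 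The product of expectations is of the smaller order $\min\clc{1-p,p^{m+4}}$, since each single bridge contributes two new edges. Combining gives the claimed bound $C\norm{f'}\norm{g'}\min\clc{1-p,p^{m+3}}$.

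The step I expect to be the main obstacle is recognising that a \emph{single} truncation, as in Lemma~\ref{lem10}, is not enough here: keeping $g(Y_{v',w'})$ intact leaves a remainder of order $p^{m+2}$, because a $v$-bridge meeting $v'\cup w'$ costs only two edges, whereas the true order is $p^{m+3}$. The gain comes precisely from truncating both sides and observing that a nonzero cross-covariance forces a bridge on \emph{each} side sharing a common cross edge, which saves one edge and hence one power of $p$. The remaining care is purely in the edge-support bookkeeping needed to certify that the three mixed and fully-truncated covariances vanish exactly.
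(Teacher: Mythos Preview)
Your argument is correct, but it takes a genuinely different route from the paper's. The paper does \emph{not} truncate at all: it applies the conditional covariance formula (Lemma~\ref{lem5}) directly with $\cF=\sigma(\cF_1,\cF_2)$ generated by the edges inside $v\cup w$ and inside $v'\cup w'$. The term $a_1$ (covariance of conditional expectations) vanishes because $\IE^{\cF}f(Y_{v,w})=\IE^{\cF_1}f(Y_{v,w})$ is $\cF_1$-measurable and likewise for the $v'$-side, and $\cF_1\perp\cF_2$. The term $a_2$ (expected conditional covariance) is handled via association and Newman's inequality (Corollary~\ref{cor1} and Lemma~\ref{lem4}), which replaces $\Cov^{\cF}(f(Y_{v,w}),g(Y_{v',w'}))$ by $\norm{f'}\norm{g'}\Cov^{\cF}(Y_{v,w},Y_{v',w'})$; the only non-vanishing summands of the latter are precisely the bridge--bridge pairs, yielding $p^{m+3}$.

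Your double-truncation approach reaches the same combinatorial endpoint (bridge--bridge pairs, with at most one shared cross edge) but by a more elementary path: you never invoke association or Newman's lemma, relying instead on raw edge-support disjointness to kill three of the four terms outright. This is a genuine simplification of the machinery, at the cost of having to set up and justify the truncation. The paper's approach, by contrast, dispenses with truncation entirely here (unlike in Lemma~\ref{lem10}) because the full disjointness $(v\cup w)\cap(v'\cup w')=\varnothing$ already makes $a_1=0$ without any surgery on $Y_{v,w}$; the price is the dependence on the positive-association structure via Lemma~\ref{lem4}. Both proofs are valid and of comparable length; yours is self-contained modulo Lemma~\ref{lem8}, while the paper's is more uniform with the treatment of Lemmas~\ref{lem10} and~\ref{lem12}.
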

\begin{proof} As in Lemma~\ref{lem9}, we only consider~$\Cov\bclr{X_{v}X_{w}f(Y_{v,w}),X_{v'}X_{w'}g(Y_{v',w'})}$. Let~$\cF_1$, $\cF_2$ and~$\cF$ be as in~\eq{16}.
Applying Lemma~\ref{lem5},
\besn{\label{17}
	\Cov\bclr{X_{v}X_{w}f(Y_{v,w}),X_{v'}X_{w'}g(Y_{v',w'})} 
&=\Cov\bclr{X_{v}X_{w}\IE^{\cF}f(Y_{v,w}),X_{v'}X_{w'}\IE^{\cF}g(Y_{w,w'})} \\ 
	&\quad+\IE\bclc{X_{v}X_{w}X_{v'}X_{w'}\Cov^\cF\bclr{f(Y_{v,w}),g(Y_{v',w'})}} \\
	& =: a_1 + a_2.
}
Now, since~$\IE^{\cF}f(Y_{v,w}) = \IE^{\cF_1}f(Y_{v,w})$
and~$\IE^{\cF}f(Y_{v',w'}) = \IE^{\cF_2}f(Y_{v',w'})$, we have~$a_1=0$. By Corollary~\ref{cor1} and Lemma~\ref{lem4},
\be{
	\babs{\Cov^\cF\bclr{f(Y_{v,w}),g(Y_{v',w'})}}
	\leq C\norm{f'}\norm{g'}\Cov^\cF\bclr{Y_{v,w},Y_{v',w'}}.
}
We can write
\bes{
	\Cov^\cF\bclr{Y_{v,w},Y_{v',w'}} 
	& = \sum_{\substack{e\in M(v,w), \\ e'\in M(v',w')}}\sum_{\substack{i\in( v' \cup w'),\\ j\in(v\cup w)}}
	\Cov^\cF\bclr{X_{e_1e_2i},X_{e_1'e_2'j}}\\
	&\quad \quad +\sum_{\substack{e\in M(v,w), \\ e'\in M(v',w')}}\sum_{\substack{i,j\in [n]:\\ i\not\in( v' \cup w')\text{ or}\\ j\not\in(v\cup w)}} \Cov^\cF\bclr{X_{e_1e_2i},X_{e_1'e_2'j}}.
}
From the assertions we conclude that~$e\in M(v,w)$ and~$e'\in M(v',w')$ implies~$\abs{e\cap e'}=0$. Hence, we can use independence to find that the covariances in the second sum all vanish. For the first sum, note that~$M(v,w,v',w',\clc{e_1,e_2,i},\clc{e_1',e_2',j})\geq M(v,w,v',w')+3$. Thus, Lemma~\ref{lem8} yields
\bm{
	a_2\leq 
	\IE\bbbabs{X_{v}X_{w}X_{v'}X_{w'}\sum_{\substack{e\in M(v,w), \\ e'\in M(v',w')}}\sum_{\substack{i\in( v' \cup w'),\\ j\in(v\cup w)}}
	\Cov^\cF\bclr{X_{e_1e_2i},X_{e_1'e_2'j}}}\\
	\leq C \min\{1-p,p^{m+3}\}.
}
Collecting all the estimates gives the final bound. 
\end{proof}

\begin{lemma}\label{lem12} Let~$v,v'\in\cT$ be such that~$\abs{v\cap v'}=0$, and let~$w\in \nu_v$ and~$w'\in \nu_{v'}$ be such that~$\babs{(v\cup w)\cap (v'\cup w')}=1$. Let~$f$ and~$g$ be differentiable, complex-valued functions with~$f(0)=g(0)=0$. Then
\bm{
	\Cov\bclr{X_{v}X_{w}f(Y_{v}),X_{v'}X_{w'}g(Y_{v'})}
	\vee \Cov\bclr{X_{v}X_{w}f(Y_{v,w}),X_{v'}X_{w'}g(Y_{v',w'})}\\
	\leq C\norm{f'}\norm{g'}\min\bclc{n(1-p),p^{m+1}+np^{m+3}},
}
where~$m=\abs{M(v,w,v',w')}$.
\end{lemma}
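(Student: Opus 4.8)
The plan is to follow the proof of Lemma~\ref{lem10}, to which this statement is the exact analogue: the bound is the same, and the only structural change is that the unique common vertex of the two extended clusters~$v\cup w$ and~$v'\cup w'$ now enters through~$w$ or~$w'$ rather than through~$v\cap v'$. Since the argument of Lemma~\ref{lem10} uses the shared vertex only through the bridging triangle indicators it creates and through the fact that a single shared vertex carries no shared edge, and both features persist here, the same scheme applies. As usual I treat only~$\Cov\bclr{X_{v}X_{w}f(Y_{v,w}),X_{v'}X_{w'}g(Y_{v',w'})}$, the~$Y_v$, $Y_{v'}$ version being identical, and I note that the crude bound~$Cn(1-p)$ is immediate from Lemma~\ref{lem8}, since~$Y_{v,w}$ and~$Y_{v',w'}$ each have order~$n$ summands.

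The substance is the bound~$C\bclr{p^{m+1}+np^{m+3}}$, and as in Lemma~\ref{lem10} I would first strip off the bridging indicators. Let~$Z_{v,w}=\sum_{u\in\eta}X_u$, where~$\eta$ consists of those~$u\in\nu_{v,w}\setminus\{v,w\}$ sharing an edge with~$v$ or~$w$ whose free vertex lies in~$v'\cup w'$; there are only~$\bigo(1)$ such indicators, and each satisfies~$\abs{M(v,w,v',w',u)}\geq m+1$ in analogy with~\eqref{14}. With~$\~Y_{v,w}=Y_{v,w}-Z_{v,w}$ and~$Z_{v',w'}$, $\~Y_{v',w'}$ defined symmetrically, an application of Lemma~\ref{lem6} with~$U=X_vX_w$, $V=f(Y_{v,w})$, $\~V=f(\~Y_{v,w})$ and the primed analogues, using~$\abs{f(Y_{v,w})-f(\~Y_{v,w})}\leq\norm{f'}\abs{Z_{v,w}}$ and its counterpart, yields
\be{
\babs{\Cov\bclr{X_{v}X_{w}f(Y_{v,w}),X_{v'}X_{w'}g(Y_{v',w'})}}
\leq\babs{\Cov\bclr{X_{v}X_{w}f(\~Y_{v,w}),X_{v'}X_{w'}g(\~Y_{v',w'})}}+R.
}
Inside~$R$ each bridging indicator (edge count~$\geq m+1$) is paired with the order-$n$ sum~$Y_{v',w'}$, whose generic summand adds two further edges; hence Lemma~\ref{lem8} together with~\eqref{14} and~\eqref{15} gives~$R\leq C\min\bclc{n(1-p),p^{m+1}+np^{m+3}}$, exactly as in Lemma~\ref{lem10}.

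For the truncated covariance I would apply Lemma~\ref{lem5} with~$\cF=\sigma(\cF_1,\cF_2)$ as in~\eqref{16}. As~$(v\cup w)\cap(v'\cup w')$ contains no edge, $\cF_1$ and~$\cF_2$ are independent; and as the bridging indicators have been removed, every remaining summand of~$\~Y_{v,w}$ carries no edge inside~$v'\cup w'$, so that~$\IE^\cF f(\~Y_{v,w})=\IE^{\cF_1}f(\~Y_{v,w})$ and~$\IE^\cF g(\~Y_{v',w'})=\IE^{\cF_2}g(\~Y_{v',w'})$ are measurable with respect to these independent~$\sigma$-algebras, whence~$a_1=0$. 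For~$a_2=\IE\Cov^\cF(\,\cdot\,,\,\cdot\,)$, Corollary~\ref{cor1} and Lemma~\ref{lem4} reduce the task to bounding~$\Cov^\cF\bclr{\~Y_{v,w},\~Y_{v',w'}}$, which I would expand over edges~$e\in M(v,w)$, $e'\in M(v',w')$ and free vertices. Because~$M(v,w)$ and~$M(v',w')$ are disjoint, a summand survives conditioning only when its two triangle indicators share a random edge, and this forces both to pass through the common vertex~$x$ with a coinciding free vertex — a configuration carrying~$\abs{M}\geq m+3$ and occurring order~$n$ times — so that Lemma~\ref{lem8} yields~$a_2\leq C\min\bclc{n(1-p),np^{m+3}}$.

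The step I expect to require the most care is the remainder bound and the attendant combinatorial bookkeeping around the single shared vertex. In contrast with the fully disjoint configuration of Lemma~\ref{lem11}, where all cross-cluster covariances vanish after conditioning and no truncation is needed, the shared vertex here produces both a nonempty bridging set (responsible for the~$p^{m+1}$ term) and a nonvanishing conditional covariance routed through~$x$ (responsible for the~$np^{m+3}$ term). Disentangling these two sources, and verifying in each case the relevant edge-count inequality together with the order of the multiplicity, is the delicate part of the argument; it is, however, entirely parallel to the bookkeeping already performed in Lemma~\ref{lem10}, and collecting the estimates completes the proof.
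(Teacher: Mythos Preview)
Your proposal is correct and follows precisely the route the paper intends: the paper's own ``proof'' of Lemma~\ref{lem12} consists of the single sentence that it is similar to Lemma~\ref{lem10} and therefore omitted, and your argument is exactly that adaptation. The key observations you identify---that a single shared vertex carries no shared edge (so $\cF_1$ and $\cF_2$ are independent and $M(v,w)\cap M(v',w')=\varnothing$), that stripping the $O(1)$ bridging indicators via Lemma~\ref{lem6} costs $p^{m+1}+np^{m+3}$, and that the surviving conditional covariance is supported on pairs routed through $x$ with a common free vertex---are precisely the points that distinguish this case from Lemma~\ref{lem11} and align it with Lemma~\ref{lem10}.
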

\begin{proof} The proof is similar to that of Lemma~\ref{lem10} and therefore ommited. 
\end{proof}

\begin{proof}[Proof of Theorem~\ref{THM1}] 
\noindent\textbf{Construction of Stein coupling.\enskip}
Our proof is based on a Stein coupling that is equivalent to what was implicitly used by \cite{Barbour1989}. Let 
\be{
	W = \frac{1}{\sigma}\sum_{v\in\cT}X_{v}
}
be the scaled sum of centred triangle indicator random variables. Let~$V$ be an independent random variable,  uniformly distributed on~$\cT$; set  
\be{
	G=-{n\choose 3}\frac{X_V}{\sigma},\qquad W'=W - \frac{Y_{V}}{\sigma}. 
}
Using the fact that~$W - Y_{v}/\sigma$ and~$X_{v}$ are independent, it is straightforward to verify that~$(W,W',G)$ is a Stein coupling, and we have~$D=-Y_{V}/\sigma$. 

\medskip

\noindent\textbf{Construction of extended Stein coupling.\enskip} In order to construct~$\~D$, $S$ and~$W''$, recall the definition of~$\nu_v$ from~\eq{12};
note that~$\abs{\nu_v}=3(n-3)+1$ and~$Y_v = \sum_{u\in\nu_v}X_u$. Given~$V$, let~$V'$ be a random variable, independent of all else and distributed uniformly on~$\nu_v$. Set
\be{
	\~D := -\frac{3(n-3)+1}{\sigma}X_{V'}.
}
It is straightforward to check that 
\be{
	\IE\bclc{\~D\given (I_{e})_{e\in\cE},V}
	= -\frac{Y_{V}}{\sigma} = D,
}	
which implies that~$\IE^W(G\~D) = \IE^W(GD)$ since~$W$ is measurable with respect to~$(I_{e})_{e\in\cE}$.
Let
\be{
	S := {n\choose 3}\frac{3(n-3)+1}{\sigma^2}\times\begin{cases}
	\Var X_{123} & \text{if~$V'=V$,} \\
	\Cov\bclr{X_{123},X_{124}}& \text{if~$V'\neq V$,}
	\end{cases}
}
and it is easy to verify that~$\IE^W S = 1$. For~$v\in\cT$ and ~$w\in\nu_v$, recall the definition of~$\nu_{v,w}$ in~\eq{13}; let
\be{
	Y_{v,w} = \sum_{u\in\nu_{v,w}}X_u,
}
and moreover, let
\be{
	W'' = W - \frac{Y_{V,V'}}{\sigma}.
}
Using independence between~$W-Y_{v,w}/\sigma$ and~$X_{v}X_w$ it follows that
\bes{
	&\IE\bclc{G\~D\given V,W''} \\
	&\qquad = {n\choose 3}\frac{1+3(n-3)}{\sigma^2}\begin{cases}
	\IE\bclc{X_{V}^2} & \text{if~$V=V'$,} \\[2ex]
	\IE\bclc{X_{V}X_{V'}} & \text{if~$V\neq V'$} \\
	\end{cases} \\
	&\qquad = S,
}
which implies that~$\IE^{W''}(G\~D) = \IE^{W''}S$, and we have 
\be{
	D' = -\frac{1}{\sigma}Y_{V,V'}.
}
Thus, all the conditions in Theorem~\ref{THM2} are satisfies, and it remains to bound~$r_3$ and~$r_4$.

\smallskip

\noindent\textbf{Bounding~$\boldsymbol{r_3}$.\enskip} 
In what follows, $C$ denotes a constant that does not depend on~$n$ and~$p$, and which can change from line to line. First, write
\be{
	r_3  = \ahalf\IE\abs{GD^2} + \IE\abs{G\~DD'}  + \IE\abs{SD'}
	=: \ahalf r_{3,1} + r_{3,2} + r_{3,3}.
}
Using some obvious symmetries, and bounding terms like~$n-1$, $n-2$ and so forth by~$n$, we have
\bes{
	r_{3,1} &
	\leq \frac{Cn^3}{\sigma^3}\IE\babs{X_{V}Y_{V}^2}
	= \frac{Cn^3}{\sigma^3}\IE\babs{X_{123}Y_{123}^2}.
}
On the one hand, we have the simple bound~$\IE\babs{X_{123}Y_{123}^2}\leq n^2\IE\abs{X_{123}}\leq Cn^2(1-p)$, and on the other hand, by means of Lemma~\ref{lem8}, it is straightforward to see that 
\bes{
	\IE\babs{X_{123}Y_{123}^2}\leq C\bclr{p^3 + np^5+n^2p^7};
}
hence,
\be{
	r_{3,1} \leq \frac{C}{\sigma^3}\min\bclc{n^5(1-p),n^3p^3+n^4p^5+n^5p^7}
} 
Moreover,
\bes{
	r_{3,2}  
	& = \IE\abs{G\~DD'} \\
	& \leq \frac{Cn^4}{\sigma^3} 
		\IE\abs{X_V X_{V'} Y_{V,V'}}\\
	& \leq \frac{Cn^4}{\sigma^3} 
		\IE\babs{X_V^2 Y_{V} \I[V=V']+X_V X_{V'} Y_{V,V'}\I[V\neq V']}\\
	& \leq \frac{Cn^3}{\sigma^3} 
		\IE\babs{X_{123}^2Y_{123}} + \frac{Cn^4}{\sigma^3}\IE\abs{X_{123}X_{124}Y_{123,124}}.
}
On the hand hand, we can bound both~$\IE\babs{X_{123}^2Y_{123}}$ and~$\IE\babs{X_{123}X_{124}Y_{123,124}}$ by~$Cn(1-p)$. On the other hand, we have~$\IE\babs{X_{123}^2Y_{123}}\leq C(p^3+np^5)$ and~$\IE\babs{X_{123}X_{124}Y_{123,124}}\leq C(p^5+np^7)$ from Lemma~\ref{lem8}, so that
\be{ 
	 r_{3,2}\leq \frac{C}{\sigma^3}\min\bclc{n^5(1-p),n^3p^3+n^4p^5+n^5p^7}.
}
Finally, 
\bes{
	r_{3,3}  
	& = \IE\abs{SD'} \\
	& \leq \frac{Cn^4}{\sigma^3} 
		\IE\babs{\I[V=V']Y_{V}\Var(X_{123}) + \I[V\neq V']Y_{V,V'}\Cov\clr{X_{123},X_{124}}}\\
	& \leq \frac{Cn^3}{\sigma^3}\Var(X_{123}) 
		\,\IE\abs{Y_{123}} + \frac{Cn^4}{\sigma^3}\IE\abs{Y_{123,124}}\Cov\clr{X_{123},X_{124}}.
}
On the one hand, we can bound both~$\Var X_{123}$ and~$\Cov\bclr{X_{123},X_{124}}$ by~$C(1-p)$, and both~$\IE\babs{Y_{123}}$ and~$\IE\babs{Y_{1234}}$ by~$n$. On the other hand, we have the bounds~$\Var X_{123}\leq Cp^3$, $\Cov\clr{X_{123},X_{124}}\leq Cp^5$, $\IE\abs{Y_{123}}\leq Cnp^3$ and~$\IE\abs{Y_{123,124}}\leq Cnp^3$. This yields
\be{
	r_{3,3} \leq\frac{C}{\sigma^3}\min\bclc{n^5(1-p),n^4p^6+n^5p^8}.
}
Combining the bounds on~$r_{3,1}$, $r_{3,2}$ and~$r_{3,3}$, we obtain 
\ben{\label{18}
	r_3 \leq\begin{cases}
		\ds\frac{Cn^5(1-p)}{\sigma^3}&\text{if~$1/2<p<1$,}\\[2ex]
		\ds\frac{Cn^5p^7}{\sigma^3}& \text{if~$n^{-1/2}<p\leq 1/2$,}\\[2ex]
		\ds\frac{Cn^3p^3}{\sigma^3}& \text{if~$0<p\leq n^{-1/2}$.}
	\end{cases}
}
For later use, we let~$\~r_3$ be equal to the right hand side of~\eq{18}.
\smallskip

\noindent\textbf{Bounding~$\boldsymbol{r_4}$.\enskip} 
First write
\be{
	r_4 =: \sup_{t\neq0}\frac{r_{4,1}(t)^{1/2}}{t^2}
	 + \sup_{t\neq0}\frac{r_{4,2}(t)^{1/2}}{\abs{t}} 
	+ \sup_{t\neq0}\frac{r_{4,3}(t)^{1/2}}{\abs{t}}.
}
From now on, we consider~$t$ as being fixed and, thus, drop it from our notation. Moreover, instead of conditioning~$G(e^{itD}-1-itD)$ on~$W$, we may condition on~$(I_e)_{e\in\cE}$, making the corresponding variances in~$r_4$ only larger. Now, define the function
\ben{\label{19}
	\phi(x) = \begin{cases}	\ds\frac{e^{ix}-1-ix}{x} & \text{if~$x\neq 0$,}\\[2ex]
	0 &\text{if~$x=0$,}
	\end{cases}
}
and for later use, note that
\ben{\label{20}
	\abs{\phi(x)-\phi(y)}\leq \frac{\abs{x-y}}{2}.
}
Hence,
\bes{
	r_{4,1} &\leq \frac{1}{\sigma^2}\Var\sum_{v}X_{v}\bclr{e^{tY_{v}/\sigma}-1-itY_{v}/\sigma}\\
	&	\leq \frac{Ct^2 n^3}{\sigma^4}\bclr{r_{4,1,1}+nr_{4,1,2} + n^2r_{4,1,3} + n^3r_{4,1,4}},
}
where
\ba{
	r_{4,1,1} &= \Cov\bclr{X_{123}Y_{123}\phi\bclr{tY_{123}/\sigma},X_{123}Y_{123}\phi\bclr{tY_{123}/\sigma}},\\
	r_{4,1,2} &= \babs{\Cov\bclr{X_{123}Y_{123}\phi\bclr{tY_{123}/\sigma},X_{124}Y_{124}\phi\bclr{tY_{124}/\sigma}}},\\
	r_{4,1,3} &= \babs{\Cov\bclr{X_{123}Y_{123}\phi\bclr{tY_{123}/\sigma},X_{145}Y_{145}\phi\bclr{{tY_{145}/\sigma}}}},\\
	r_{4,1,4} &= \babs{\Cov\bclr{X_{123}Y_{123}\phi\bclr{tY_{123}/\sigma},X_{456}Y_{456}\phi\bclr{{tY_{456}/\sigma}}}}.
}
Combining all estimates from Tables~\ref{tab1} to~\ref{tab4}, we obtain
\be{
	r_{4,1} \leq\begin{cases}
		\ds\frac{Ct^4n^8(1-p)}{\sigma^6}&\text{if~$1/2<p<1$,}\\[2ex]
		\ds\frac{Ct^4n^8p^{13}}{\sigma^6}& \text{if~$n^{-1/2}<p\leq 1/2$,}\\[2ex]
		\ds\frac{Ct^4n^3p^3}{\sigma^6}& \text{if~$0<p\leq n^{-1/2}$.}
	\end{cases}
}
We continue to bound~$r_{4,2}$; to this end, define
\ben{\label{21}
	\psi(x) = e^{ix}-1,
}
and, again for later use, note that
\ben{\label{22}
	\abs{\psi(x)-\psi(y)}\leq \abs{x-y}.
}
Hence,
\bes{
	r_{4,2} &\leq \frac{1}{\sigma^4}\Var\sum_{v\in\cT}X_{v}\sum_{w\in\eta_v}X_w\bclr{e^{tY_{v,w}/\sigma}-1}\\
	&	\leq \frac{C n^3}{\sigma^4}\bclr{r_{4,2,1}+nr_{4,2,2} + n^2r_{4,2,3} + n^3r_{4,2,4}},
}
where
\ba{
	r_{4,2,1} &= \Cov\bclr{X_{123}\tssum_{w\in\nu_{123}}X_w\psi\bclr{tY_{123,w}/\sigma},X_{123}\tssum_{w\in\nu_{123}}X_w\psi\bclr{tY_{123,w}/\sigma}},\\
	r_{4,2,2} &= \babs{\Cov\bclr{X_{123}\tssum_{w\in\nu_{123}}X_w\psi\bclr{tY_{123,w}/\sigma},X_{124}\tssum_{w\in\nu_{124}}X_w\psi\bclr{tY_{124,w}/\sigma}}},\\
	r_{4,2,3} &= \babs{\Cov\bclr{X_{123}\tssum_{w\in\nu_{123}}X_w\psi\bclr{tY_{123,w}/\sigma},X_{145}\tssum_{w\in\nu_{145}}X_w\psi\bclr{tY_{145,w}/\sigma}}},\\
	r_{4,2,4} &= \babs{\Cov\bclr{X_{123}\tssum_{w\in\nu_{123}}X_w\psi\bclr{tY_{123,w}/\sigma},X_{456}\tssum_{w\in\nu_{456}}X_w\psi\bclr{tY_{456,w}/\sigma}}}.
}
Combining all estimates from Tables~\ref{tab1} to~\ref{tab4}, we obtain
\be{
	r_{4,2} \leq\begin{cases}
		\ds\frac{Ct^2n^8(1-p)}{\sigma^6}&\text{if~$1/2<p<1$,}\\[2ex]
		\ds\frac{Ct^2n^8p^{13}}{\sigma^6}& \text{if~$n^{-1/2}<p\leq 1/2$,}\\[2ex]
		\ds\frac{Ct^2n^3p^3}{\sigma^6}& \text{if~$0<p\leq n^{-1/2}$.}
	\end{cases}
}
We proceed to bound~$r_{4,3}$. Let~$\sigma_{v,w} = \Cov(X_v,X_w)$, we have
\bes{
	r_{4,3} &\leq \frac{1}{\sigma^4}\Var\sum_{v\in\cT}\sum_{w\in\eta_v}\sigma_{v,w}\bclr{e^{tY_{v,w}/\sigma}-1}\\
	&	\leq \frac{C n^3}{\sigma^4}\bclr{r_{4,3,1}+nr_{4,3,2} + n^2r_{4,3,3} + n^3r_{4,3,4}},
}
where
\ba{
	r_{4,3,1} &= \Cov\bclr{\tssum_{w\in\nu_{123}}\sigma_{123,w}\psi\bclr{tY_{123,w}/\sigma},\tssum_{w\in\nu_{123}}\sigma_{123,w}\psi\bclr{tY_{123,w}/\sigma}},\\
	r_{4,3,2} &= \babs{\Cov\bclr{\tssum_{w\in\nu_{123}}\sigma_{123,w}\psi\bclr{tY_{123,w}/\sigma},\tssum_{w\in\nu_{124}}\sigma_{124,w}\psi\bclr{tY_{124,w}/\sigma}}},\\
	r_{4,3,3} &= \babs{\Cov\bclr{\tssum_{w\in\nu_{123}}\sigma_{123,w}\psi\bclr{tY_{123,w}/\sigma},\tssum_{w\in\nu_{145}}\sigma_{145,w}\psi\bclr{tY_{145,w}/\sigma}}},\\
	r_{4,3,4} &= \babs{\Cov\bclr{\tssum_{w\in\nu_{123}}\sigma_{123,w}\psi\bclr{tY_{123,w}/\sigma},\tssum_{w\in\nu_{456}}\sigma_{456,w}\psi\bclr{tY_{456,w}/\sigma}}}.
}

Bounds can also be obtained from Tables~\ref{tab1} to~\ref{tab4}, since~$\sigma_{v,w}\sigma_{v',w'}\leq C\min\{1-p,p^m\}$ where~$m=\abs{M(v,w,v',w')}$, so that bounds on~$r_{4,2,1}$ to~$r_{4,2,4}$ are also bounds on~$r_{4,3,1}$ to~$r_{4,3,4}$. Thus,

\be{
	r_{4,3} \leq\begin{cases}
		\ds\frac{Ct^2n^8(1-p)}{\sigma^6}&\text{if~$1/2<p<1$,}\\[2ex]
		\ds\frac{Ct^2n^8p^{13}}{\sigma^6}& \text{if~$n^{-1/2}<p\leq 1/2$,}\\[2ex]
		\ds\frac{Ct^2n^3p^3}{\sigma^6}& \text{if~$0<p\leq n^{-1/2}$.}
	\end{cases}
}
Putting the estimates together, we obtain
\be{
	r_{4} \leq\begin{cases}
		\ds\frac{Cn^4(1-p)^{1/2}}{\sigma^3}&\text{if~$1/2<p<1$,}\\[2ex]
		\ds\frac{Cn^4p^{13/2}}{\sigma^3}& \text{if~$n^{-1/2}<p\leq 1/2$,}\\[2ex]
		\ds\frac{Cn^{3/2}p^{3/2}}{\sigma^3}& \text{if~$0<p\leq n^{-1/2}$.}
	\end{cases}
}
From Lemma~\ref{lem7}, 
\be{
	\sigma^2\geq \begin{cases}
		Cn^4(1-p)&\text{if~$1/2<p<1$,}\\[2ex]
		Cn^4p^{5}& \text{if~$n^{-1/2}<p\leq 1/2$,}\\[2ex]
		Cn^{3}p^{3}& \text{if~$0<p\leq n^{-1/2}$.}
	\end{cases}
}
Applying Theorem~\ref{THM2}, the final bound follows.
\end{proof}

\subsection{Covariance estimates}

In the following tables, we provide bounds necessary for the proof of Theorem~\ref{THM1}. We illustrate how to read the tables by means of~$r_{4,1,3}$ from the proof of Theorem~\ref{THM1}. We have
\bes{
r_{4,1,3} &= \babs{\Cov\bclr{X_{123}Y_{123}\phi\bclr{tY_{123}/\sigma},X_{145}Y_{145}\phi\bclr{{tY_{145}/\sigma}}}};
}
expanding the covariance over~$Y_{123}$ and~$Y_{145}$, we have
\be{
	r_{4,1,3}\leq 
	\sum_{u\in\nu_{123}}\sum_{u'\in\nu_{145}}
	\babs{\Cov\bclr{X_{123}X_{u}\phi\bclr{tY_{123}/\sigma},X_{145}X_{u'}\phi\bclr{{tY_{145}/\sigma}}}}.
}
Table~\ref{tab3} now gives bound on these covariance terms for all possible combinations of~$u$ and~$u'$ (modulo symmetries). The solid thick line represents the variable~$X_{123}$, the dashed thick line the variable~$X_{u}$, the solid thin line the variable~$X_{145}$ and the dashed thin line the variable~$X_{u'}$. For example, for~$u=\{2,3,4\}$ and~$u'=\{4,5,6\}$ (15th row in Table~\ref{tab3}) we find that
\bm{
	\babs{\Cov\bclr{X_{123}X_{234}\phi\bclr{tY_{123}/\sigma},X_{145}X_{456}\phi\bclr{{tY_{145}/\sigma}}}}\\
	\leq \frac{Ct^2}{\sigma^2}\min\bclc{n^2(1-p),p^9+np^{11}+n^2p^{13}}.
}
This covariance term appears order~$n$ times in~$r_{4,1,3}$, since vertex~$6$ represents a generic vertex different from~$\{1,2,3,4,5\}$, and the bound was obtained through Lemma~\ref{lem9}.

The number of occurrences and all bounds provided are up to combinatorial constants, which are independent of~$n$ and~$p$.


\captionsetup{labelfont={bf,small},font={small}}

\tikzset{
 every picture/.style={xscale=0.4, yscale=0.4},
 vertex/.style={circle, fill, inner sep=1.5pt},
 tr1/.style={line width =   0.9pt},
 tr2/.style={line width =   0.9pt, densely dotted},
 tr3/.style={line width = 0.5pt},
 tr4/.style={line width = 0.5pt, densely dotted},
 dontbend/.style={bend left=0},
 halfbendright/.style={bend right=6},
 halfbendleft/.style={bend left=6},
 bendright/.style={bend right=10},
 bendleft/.style={bend left=10},
 bendbendright/.style={bend right=26},
 bendbendleft/.style={bend left=26},
 bendhalfbendright/.style={bend right=20},
 bendhalfbendleft/.style={bend left=20},
 bendbendbendright/.style={bend right=40},
 bendbendbendleft/.style={bend left=40},
 bendbendbendbendright/.style={bend right=55},
 bendbendbendbendleft/.style={bend left=55}
}

\def\db{dontbend}  
\def\rhb{halfbendright} 
\def\lhb{halfbendleft}  
\def\rb{bendright} 
\def\lb{bendleft}  
\def\rbb{bendbendright}
\def\lbb{bendbendleft} 
\def\rbhb{bendhalfbendright}
\def\lbhb{bendhalfbendleft} 
\def\rbbb{bendbendbendright}
\def\lbbb{bendbendbendleft} 
\def\rbbbb{bendbendbendbendright}
\def\lbbbb{bendbendbendbendleft} 

\def\tr[#1]#2#3#4#5#6#7{
\draw (#2) [#1, #3] to (#4)
			(#4) [#1, #5] to (#6)
			(#6) [#1, #7] to (#2);
}


\begin{center}\setlength{\tabcolsep}{.5em}
\footnotesize
\begin{longtable}{
	>{\centering}m{0.8in}
	>{\centering}m{0.5in}
	>{\centering}m{0.65in}
	>{\centering}m{1.0in}
	>{\centering\arraybackslash}m{0.5in}
}
\caption{\label{tab1} All possible combinations of triangle counts that can occur when expanding~$r_{4,1,1}$ and~$r_{4,2,1}$, along with the bounds on the  corresponding summands. } \\
\toprule
\bf Pattern & 
\bf occur\-rences & 
\bf bound for large~$\boldsymbol{p}$&
\bf bound for small~$\boldsymbol{p}$&
\bf Lemma used\\
\midrule
\endfirsthead

\multicolumn{4}{c}%
{\tablename~\thetable\ (continued from previous page)} \\
\midrule
\bf Pattern & 
\bf occur\-rences & 
\bf bound for large~$\boldsymbol{p}$&
\bf bound for small~$\boldsymbol{p}$&
\bf Lemma used\\
\midrule
\endhead

\multicolumn{5}{r}{(continued on next page)}\\
\endfoot

\bottomrule
\endlastfoot

\begin{tikzpicture}
\path ( 90:1) node[vertex,label=above:1] (1) {} to 
      (210:1) node[vertex,label=left:2 ] (2) {} to 
      (330:1) node[vertex,label=right:3] (3) {};
\tr[tr1] 1 \rb 2 \rb 3 \rb
\tr[tr2] 1 \rbb 2 \rbb 3 \rbb
\tr[tr3] 1 \lb 2 \lb 3 \lb
\tr[tr4] 1 \lbb 2 \lbb 3 \lbb
\end{tikzpicture} &
$1$&
$n^2(1-p)$&
$p^3+np^5+n^2p^7$&
Lem.~\ref{lem9} \\
\begin{tikzpicture}
\path ( 90:1) node[vertex,label=above:1] (1) {} to 
      (210:1) node[vertex,label=left:2 ] (2) {} to 
      (330:1) node[vertex,label=right:3] (3) {} to
      (150:2) node[vertex,label=above:4] (4) {};
\tr[tr1] 1 \rb 2 \rb 3 \rb
\tr[tr2] 1 \rbb 2 \lb 4 \lb
\tr[tr3] 1 \lb 2 \lb 3 \lb
\tr[tr4] 1 \lbb 2 \lbb 3 \lbb
\end{tikzpicture} &
$n$ & 
$n^2(1-p)$ &
$p^5+np^7+n^2p^9$& 
Lem.~\ref{lem9}\\
\begin{tikzpicture}
\path ( 90:1) node[vertex,label=above:1] (1) {} to 
      (210:1) node[vertex,label=left:2 ] (2) {} to 
      (330:1) node[vertex,label=right:3] (3) {} to
      (150:2) node[vertex,label=above:4] (4) {};
\tr[tr1] 1 \rb 2 \rb 3 \rb
\tr[tr2] 1 \rbb 2 \lb 4 \lb
\tr[tr3] 1 \lb 2 \lb 3 \lb
\tr[tr4] 1 \lbb 2 \rb 4 \rb
\end{tikzpicture} &
$n$ & 
$n^2(1-p)$ &
$p^5+np^7+n^2p^9$& 
Lem.~\ref{lem9}\\
\begin{tikzpicture}
\path ( 90:1) node[vertex,label=above:1] (1) {} to 
      (210:1) node[vertex,label=left:2 ] (2) {} to 
      (330:1) node[vertex,label=right:3] (3) {} to
      (150:2) node[vertex,label=above:4] (4) {};
\tr[tr1] 1 \rb 2 \rb 3 \rb
\tr[tr2] 1 \rbb 2 \lb 4 \lb
\tr[tr3] 1 \lb 2 \lb 3 \lb
\tr[tr4] 1 \rbb 3 \lb 4 \rb
\end{tikzpicture} &
$n$& 
$n^2(1-p)$ &
$p^6+~np^8+~n^2p^{10}$ $\leq p^5+np^7+n^2p^{9}$& 
Lem.~\ref{lem9}\\
\begin{tikzpicture}
\path ( 90:1) node[vertex,label=above:1] (1) {} to 
      (210:1) node[vertex,label=left:2 ] (2) {} to 
      (330:1) node[vertex,label=right:3] (3) {} to
      (150:2) node[vertex,label=above:4] (4) {} to
      ( 30:2) node[vertex,label=above:5] (5) {};
\tr[tr1] 1 \rb 2 \rb 3 \rb
\tr[tr2] 1 \rbb 2 \lb 4 \lb
\tr[tr3] 1 \lb 2 \lb 3 \lb
\tr[tr4] 1 \lbb 2 \rb 5 \rb
\end{tikzpicture} &
$n^2$ & 
$n^2(1-p)$ &
$p^7+np^9+n^2p^{11}$& 
Lem.~\ref{lem9}\\
\begin{tikzpicture}
\path ( 90:1) node[vertex,label=above:1] (1) {} to 
      (210:1) node[vertex,label=left:2 ] (2) {} to 
      (330:1) node[vertex,label=right:3] (3) {} to
      (150:2) node[vertex,label=above:4] (4) {} to
      ( 30:2) node[vertex,label=above:5] (5) {};
\tr[tr1] 1 \rb 2 \rb 3 \rb
\tr[tr2] 1 \rbb 2 \lb 4 \lb
\tr[tr3] 1 \lb 2 \lb 3 \lb
\tr[tr4] 1 \lbb 3 \rb 5 \rb
\end{tikzpicture} &
$n^2$ & 
$n^2(1-p)$ &
$p^5+np^7+n^2p^9$& 
Lem.~\ref{lem9}\\
\end{longtable}
\end{center}


\begin{center}\setlength{\tabcolsep}{.5em}
\footnotesize
\begin{longtable}{
	>{\centering}m{0.8in}
	>{\centering}m{0.5in}
	>{\centering}m{0.65in}
	>{\centering}m{1.0in}
	>{\centering\arraybackslash}m{0.5in}
}
\caption{\label{tab2} All possible combinations of triangle indicators that can occur when expanding~$r_{4,1,2}$ and~$r_{4,2,2}$, along with the bounds on the  corresponding summands.}\\
\toprule
\bf Pattern & 
\bf occur\-rences & 
\bf bound for large~$\boldsymbol{p}$&
\bf bound for small~$\boldsymbol{p}$&
\bf Lemma used\\
\midrule
\endfirsthead

\multicolumn{4}{c}%
{\tablename~\thetable\ (continued from previous page)} \\
\midrule
\bf Pattern & 
\bf occur\-rences & 
\bf bound for large~$\boldsymbol{p}$&
\bf bound for small~$\boldsymbol{p}$&
\bf Lemma used\\
\midrule
\endhead

\multicolumn{5}{r}{(continued on next page)}\\
\endfoot

\bottomrule
\endlastfoot

\begin{tikzpicture}
\path (  0:0    ) node[vertex,label=above:1] (1) {} to 
      (270:1.732) node[vertex,label=below:2] (2) {} to 
      (210:1.732) node[vertex,label= left:3] (3) {} to
      (330:1.732) node[vertex,label=right:4] (4) {};
\tr[tr1] 1 \rb 2 \db 3 \db
\tr[tr2] 1 \rbb 2 \lbhb 3 \lbhb
\tr[tr3] 1 \lb 2 \db 4 \db
\tr[tr4] 1 \lbb 2 \rbhb 4 \rbhb
\end{tikzpicture} &
$1$&
$n^2(1-p)$&
$p^5+np^7+n^2p^9$&
Lem.~\ref{lem9} \\
\begin{tikzpicture}
\path (  0:0    ) node[vertex,label=above:1] (1) {} to 
      (270:1.732) node[vertex,label=below:2] (2) {} to 
      (210:1.732) node[vertex,label= left:3] (3) {} to
      (330:1.732) node[vertex,label=right:4] (4) {};
\tr[tr1] 1 \rb 2 \db 3 \db
\tr[tr2] 1 \rbb 2 \lbhb 4 \lbhb
\tr[tr3] 1 \lb 2 \db 4 \db
\tr[tr4] 1 \lbb 2 \rbhb 4 \rbhb
\end{tikzpicture} &
$1$&
$n^2(1-p)$&
$p^5+np^7+n^2p^9$&
Lem.~\ref{lem9} \\
\begin{tikzpicture}
\path (  0:0    ) node[vertex,label=above:1] (1) {} to 
      (270:1.732) node[vertex,label=below:2] (2) {} to 
      (210:1.732) node[vertex,label= left:3] (3) {} to
      (330:1.732) node[vertex,label=right:4] (4) {};
\tr[tr1] 1 \rb 2 \db 3 \db
\tr[tr2] 1 \rbhb 3 \rhb 4 \lbhb
\tr[tr3] 1 \lb 2 \db 4 \db
\tr[tr4] 1 \lbb 2 \rbhb 4 \rbhb
\end{tikzpicture} &
$1$&
$n^2(1-p)$&
$p^6+~np^8+~n^2p^{10}$ $\leq p^5+np^7+n^2p^{9}$& 
Lem.~\ref{lem9} \\
\begin{tikzpicture}
\path (  0:0    ) node[vertex,label=above:1] (1) {} to 
      (270:1.732) node[vertex,label=below:2] (2) {} to 
      (210:1.732) node[vertex,label= left:3] (3) {} to
      (330:1.732) node[vertex,label=right:4] (4) {} to
      (150:1.732) node[vertex,label= left:5] (5) {};
\tr[tr1] 1 \rb 2 \db 3 \db
\tr[tr2] 1 \rbb 2 \lhb 5 \lb
\tr[tr3] 1 \lb 2 \db 4 \db
\tr[tr4] 1 \lbb 2 \rbhb 4 \rbhb
\end{tikzpicture} &
$n$&
$n^2(1-p)$&
$p^7+np^9+n^2p^{11}$&
Lem.~\ref{lem9} \\
\begin{tikzpicture}
\path (  0:0    ) node[vertex,label=above:1] (1) {} to 
      (270:1.732) node[vertex,label=below:2] (2) {} to 
      (210:1.732) node[vertex,label= left:3] (3) {} to
      (330:1.732) node[vertex,label=right:4] (4) {} to
      (150:1.732) node[vertex,label= left:5] (5) {};
\tr[tr1] 1 \rb 2 \db 3 \db
\tr[tr2] 1 \rbhb 3 \lb 5 \lb
\tr[tr3] 1 \lb 2 \db 4 \db
\tr[tr4] 1 \lbb 2 \rbhb 4 \rbhb
\end{tikzpicture} &
$n$&
$n^2(1-p)$&
$p^7+np^9+n^2p^{11}$&
Lem.~\ref{lem9} \\
\begin{tikzpicture}
\path (  0:0    ) node[vertex,label=above:1] (1) {} to 
      (270:1.732) node[vertex,label=below:2] (2) {} to 
      (210:1.732) node[vertex,label= left:3] (3) {} to
      (330:1.732) node[vertex,label=right:4] (4) {};
\tr[tr1] 1 \rb 2 \db 3 \db
\tr[tr2] 1 \rbb 2 \lbhb 4 \lbhb
\tr[tr3] 1 \lb 2 \db 4 \db
\tr[tr4] 1 \lbb 2 \rbhb 3 \rbhb
\end{tikzpicture} &
$1$&
$n^2(1-p)$&
$p^5+np^7+n^2p^9$&
Lem.~\ref{lem9} \\
\begin{tikzpicture}
\path (  0:0    ) node[vertex,label=above:1] (1) {} to 
      (270:1.732) node[vertex,label=below:2] (2) {} to 
      (210:1.732) node[vertex,label= left:3] (3) {} to
      (330:1.732) node[vertex,label=right:4] (4) {};
\tr[tr1] 1 \rb 2 \db 3 \db
\tr[tr2] 1 \rbb 2 \lbhb 4 \lbhb
\tr[tr3] 1 \lb 2 \db 4 \db
\tr[tr4] 1 \rbhb 3 \rhb 4 \rbhb
\end{tikzpicture} &
$1$&
$n^2(1-p)$&
$p^6+~np^8+~n^2p^{10}$ $\leq p^5+np^7+n^2p^{9}$& 
Lem.~\ref{lem9} \\
\begin{tikzpicture}
\path (  0:0    ) node[vertex,label=above:1] (1) {} to 
      (270:1.732) node[vertex,label=below:2] (2) {} to 
      (210:1.732) node[vertex,label= left:3] (3) {} to
      (330:1.732) node[vertex,label=right:4] (4) {} to
      (150:1.732) node[vertex,label= left:5] (5) {};
\tr[tr1] 1 \rb 2 \db 3 \db
\tr[tr2] 1 \rbb 2 \lbhb 4 \lbhb
\tr[tr3] 1 \lb 2 \db 4 \db
\tr[tr4] 1 \lbb 2 \lhb 5 \lb
\end{tikzpicture} &
$n$&
$n^2(1-p)$&
$p^7+np^9+n^2p^{11}$&
Lem.~\ref{lem9} \\
\begin{tikzpicture}
\path (  0:0    ) node[vertex,label=above:1] (1) {} to 
      (270:1.732) node[vertex,label=below:2] (2) {} to 
      (210:1.732) node[vertex,label= left:3] (3) {} to
      (330:1.732) node[vertex,label=right:4] (4) {} to
      (150:1.732) node[vertex,label= left:5] (5) {};
\tr[tr1] 1 \rb 2 \db 3 \db
\tr[tr2] 1 \rbb 2 \lbhb 4 \lbhb
\tr[tr3] 1 \lb 2 \db 4 \db
\tr[tr4] 1 \lbhb 4 \rbbb 5 \lb
\end{tikzpicture} &
$n$&
$n^2(1-p)$&
$p^7+np^9+n^2p^{11}$&
Lem.~\ref{lem9} \\
\begin{tikzpicture}
\path (  0:0    ) node[vertex,label=above:1] (1) {} to 
      (270:1.732) node[vertex,label=below:2] (2) {} to 
      (210:1.732) node[vertex,label= left:3] (3) {} to
      (330:1.732) node[vertex,label=right:4] (4) {};
\tr[tr1] 1 \rb 2 \db 3 \db
\tr[tr2] 1 \rbhb 3 \rhb 4 \rbhb
\tr[tr3] 1 \lb 2 \db 4 \db
\tr[tr4] 1 \lbhb 3 \lhb 4 \lbhb
\end{tikzpicture} &
$1$&
$n^2(1-p)$&
$p^6+~np^8+~n^2p^{10}$ $\leq p^5+np^7+n^2p^{9}$& 
Lem.~\ref{lem9} \\
\begin{tikzpicture}
\path (  0:0    ) node[vertex,label=above:1] (1) {} to 
      (270:1.732) node[vertex,label=below:2] (2) {} to 
      (210:1.732) node[vertex,label= left:3] (3) {} to
      (330:1.732) node[vertex,label=right:4] (4) {};
\tr[tr1] 1 \rb 2 \db 3 \db
\tr[tr2] 1 \rbhb 3 \rhb 4 \rbhb
\tr[tr3] 1 \lb 2 \db 4 \db
\tr[tr4] 2 \lbhb 3 \lhb 4 \lbhb
\end{tikzpicture} &
$1$&
$n^2(1-p)$&
$p^6+~np^8+~n^2p^{10}$ $\leq p^5+np^7+n^2p^{9}$& 
Lem.~\ref{lem9} \\
\begin{tikzpicture}
\path (  0:0    ) node[vertex,label=above:1] (1) {} to 
      (270:1.732) node[vertex,label=below:2] (2) {} to 
      (210:1.732) node[vertex,label= left:3] (3) {} to
      (330:1.732) node[vertex,label=right:4] (4) {} to
      (150:1.732) node[vertex,label= left:5] (5) {};
\tr[tr1] 1 \rb 2 \db 3 \db
\tr[tr2] 1 \rbhb 3 \rhb 4 \rbhb
\tr[tr3] 1 \lb 2 \db 4 \db
\tr[tr4] 1 \lbb 2 \lhb 5 \lb
\end{tikzpicture} &
$n$&
$n^2(1-p)$&
$p^8+~np^{10}+~n^2p^{12}$ $\leq p^7+np^9+n^2p^{11}$& 
Lem.~\ref{lem9} \\
\begin{tikzpicture}
\path (  0:0    ) node[vertex,label=above:1] (1) {} to 
      (270:1.732) node[vertex,label=below:2] (2) {} to 
      (210:1.732) node[vertex,label= left:3] (3) {} to
      (330:1.732) node[vertex,label=right:4] (4) {} to
      (150:1.732) node[vertex,label= left:5] (5) {};
\tr[tr1] 1 \rb 2 \db 3 \db
\tr[tr2] 1 \rbhb 3 \rhb 4 \rbhb
\tr[tr3] 1 \lb 2 \db 4 \db
\tr[tr4] 1 \rbhb 4 \rbbb 5 \lb
\end{tikzpicture} &
$n$&
$n^2(1-p)$&
$p^8+~np^{10}+~n^2p^{12}$ $\leq p^7+np^9+n^2p^{11}$& 
Lem.~\ref{lem9} \\
\begin{tikzpicture}
\path (  0:0    ) node[vertex,label=above:1] (1) {} to 
      (270:1.732) node[vertex,label=below:2] (2) {} to 
      (210:1.732) node[vertex,label= left:3] (3) {} to
      (330:1.732) node[vertex,label=right:4] (4) {} to
      (150:1.732) node[vertex,label= left:5] (5) {};
\tr[tr1] 1 \rb 2 \db 3 \db
\tr[tr2] 1 \rbhb 3 \rhb 4 \rbhb
\tr[tr3] 1 \lb 2 \db 4 \db
\tr[tr4] 2 \rbhb 4 \rbbb 5 \lb
\end{tikzpicture} &
$n$&
$n^2(1-p)$&
$p^8+~np^{10}+~n^2p^{12}$ $\leq p^7+np^9+n^2p^{11}$& 
Lem.~\ref{lem9} \\
\begin{tikzpicture}
\path (  0:0    ) node[vertex,label=above:1] (1) {} to 
      (270:1.732) node[vertex,label=below:2] (2) {} to 
      (210:1.732) node[vertex,label= left:3] (3) {} to
      (330:1.732) node[vertex,label=right:4] (4) {} to
      (150:1.732) node[vertex,label= left:5] (5) {};
\tr[tr1] 1 \rb 2 \db 3 \db
\tr[tr2] 1 \rbb 2 \lhb 5 \lb
\tr[tr3] 1 \lb 2 \db 4 \db
\tr[tr4] 1 \lbb 2 \rhb 5 \rb
\end{tikzpicture} &
$n$&
$n^2(1-p)$&
$p^7+np^9+n^2p^{11}$&
Lem.~\ref{lem9} \\
\begin{tikzpicture}
\path (  0:0    ) node[vertex,label=above:1] (1) {} to 
      (270:1.732) node[vertex,label=below:2] (2) {} to 
      (210:1.732) node[vertex,label= left:3] (3) {} to
      (330:1.732) node[vertex,label=right:4] (4) {} to
      (150:1.732) node[vertex,label= left:5] (5) {} to
      ( 30:1.732) node[vertex,label=right:6] (6) {};
\tr[tr1] 1 \rb 2 \db 3 \db
\tr[tr2] 1 \rbb 2 \lhb 5 \lb
\tr[tr3] 1 \lb 2 \db 4 \db
\tr[tr4] 1 \lbb 2 \rhb 6 \rb
\end{tikzpicture} &
$n^2$&
$n^2(1-p)$&
$p^9+np^{11}+n^2p^{13}$&
Lem.~\ref{lem9} \\
\begin{tikzpicture}
\path (  0:0    ) node[vertex,label=above:1] (1) {} to 
      (270:1.732) node[vertex,label=below:2] (2) {} to 
      (210:1.732) node[vertex,label= left:3] (3) {} to
      (330:1.732) node[vertex,label=right:4] (4) {} to
      (150:1.732) node[vertex,label= left:5] (5) {};
\tr[tr1] 1 \rb 2 \db 3 \db
\tr[tr2] 1 \rbb 2 \lhb 5 \lb
\tr[tr3] 1 \lb 2 \db 4 \db
\tr[tr4] 1 \lbhb 4 \rbbb 5 \rb
\end{tikzpicture} &
$n$&
$n^2(1-p)$&
$p^8+~np^{10}+~n^2p^{12}$ $\leq p^7+np^9+n^2p^{11}$& 
Lem.~\ref{lem9} \\
\begin{tikzpicture}
\path (  0:0    ) node[vertex,label=above:1] (1) {} to 
      (270:1.732) node[vertex,label=below:2] (2) {} to 
      (210:1.732) node[vertex,label= left:3] (3) {} to
      (330:1.732) node[vertex,label=right:4] (4) {} to
      (150:1.732) node[vertex,label= left:5] (5) {} to
      ( 30:1.732) node[vertex,label=right:6] (6) {};
\tr[tr1] 1 \rb 2 \db 3 \db
\tr[tr2] 1 \rbb 2 \lhb 5 \lhb
\tr[tr3] 1 \lb 2 \db 4 \db
\tr[tr4] 1 \lbhb 4 \rhb 6 \rhb
\end{tikzpicture} &
$n^2$&
$n^2(1-p)$&
$p^9+np^{11}+n^2p^{13}$&
Lem.~\ref{lem9} \\
\begin{tikzpicture}
\path (  0:0    ) node[vertex,label=above:1] (1) {} to 
      (270:1.732) node[vertex,label=below:2] (2) {} to 
      (210:1.732) node[vertex,label= left:3] (3) {} to
      (330:1.732) node[vertex,label=right:4] (4) {} to
      (150:1.732) node[vertex,label= left:5] (5) {};
\tr[tr1] 1 \rb 2 \db 3 \db
\tr[tr2] 1 \rbhb 3 \lhb 5 \lhb
\tr[tr3] 1 \lb 2 \db 4 \db
\tr[tr4] 1 \lb 5 \lbbb 4 \rbhb
\end{tikzpicture} &
$n$&
$n^2(1-p)$&
$p^7+np^9+n^2p^{11}$&
Lem.~\ref{lem9} \\
\begin{tikzpicture}
\path (  0:0    ) node[vertex,label=above:1] (1) {} to 
      (270:1.732) node[vertex,label=below:2] (2) {} to 
      (210:1.732) node[vertex,label= left:3] (3) {} to
      (330:1.732) node[vertex,label=right:4] (4) {} to
      (150:1.732) node[vertex,label= left:5] (5) {} to
      ( 30:1.732) node[vertex,label=right:6] (6) {};
\tr[tr1] 1 \rb 2 \db 3 \db
\tr[tr2] 1 \rbhb 3 \lhb 5 \lhb
\tr[tr3] 1 \lb 2 \db 4 \db
\tr[tr4] 1 \lhb 6 \lhb 4 \rbhb
\end{tikzpicture} &
$n^2$&
$n^2(1-p)$&
$p^9+np^{11}+n^2p^{13}$&
Lem.~\ref{lem9} \\
\begin{tikzpicture}
\path (  0:0    ) node[vertex,label=above:1] (1) {} to 
      (270:1.732) node[vertex,label=below:2] (2) {} to 
      (210:1.732) node[vertex,label= left:3] (3) {} to
      (330:1.732) node[vertex,label=right:4] (4) {} to
      (150:1.732) node[vertex,label= left:5] (5) {};
\tr[tr1] 1 \rb 2 \db 3 \db
\tr[tr2] 1 \rbhb 3 \lhb 5 \lhb
\tr[tr3] 1 \lb 2 \db 4 \db
\tr[tr4] 2 \lhb 5 \lbbb 4 \lbhb
\end{tikzpicture} &
$n$&
$n^2(1-p)$&
$p^9+~np^{11}+~n^2p^{13}$ $\leq p^7+np^9+n^2p^{11}$& 
Lem.~\ref{lem9} \\
\begin{tikzpicture}
\path (  0:0    ) node[vertex,label=above:1] (1) {} to 
      (270:1.732) node[vertex,label=below:2] (2) {} to 
      (210:1.732) node[vertex,label= left:3] (3) {} to
      (330:1.732) node[vertex,label=right:4] (4) {} to
      (150:1.732) node[vertex,label= left:5] (5) {} to
      (300:3    ) node[vertex,label=right:6] (6) {};
\tr[tr1] 1 \rb 2 \db 3 \db
\tr[tr2] 1 \rbhb 3 \lhb 5 \lhb
\tr[tr3] 1 \lb 2 \db 4 \db
\tr[tr4] 2 \rhb 6 \rhb 4 \lbhb
\end{tikzpicture} &
$n^2$&
$n^2(1-p)$&
$p^9+np^{11}+n^2p^{13}$&
Lem.~\ref{lem9}
\end{longtable}
\end{center}


\begin{center}
\setlength{\tabcolsep}{.5em}
\footnotesize
\begin{longtable}{
	>{\centering}m{1.3in}
	>{\centering}m{0.5in}
	>{\centering}m{0.65in}
	>{\centering\arraybackslash}m{1.05in}
	>{\centering\arraybackslash}m{0.5in}
}
\caption{\label{tab3} All possible combinations of triangle indicators that can occur when expanding~$r_{4,1,3}$ and~$r_{4,2,3}$, along with the bounds on the  corresponding summands.}\\
\toprule
\bf Pattern & 
\bf occur\-rences & 
\bf bound for large~$\boldsymbol{p}$&
\bf bound for small~$\boldsymbol{p}$&
\bf Lemma used\\
\midrule
\endfirsthead

\multicolumn{4}{c}%
{\tablename~\thetable\ (continued from previous page)} \\
\midrule
\bf Pattern & 
\bf occur\-rences & 
\bf bound for large~$\boldsymbol{p}$&
\bf bound for small~$\boldsymbol{p}$&
\bf Lemma used\\
\midrule
\endhead

\multicolumn{5}{r}{(continued on next page)}\\
\endfoot

\bottomrule
\endlastfoot

\begin{tikzpicture}
\path (  0:0    ) node[vertex,label=below:1] (1) {} to 
      (150:1.732) node[vertex,label= left:2] (2) {} to 
      (210:1.732) node[vertex,label= left:3] (3) {} to
      ( 30:1.732) node[vertex,label=right:4] (4) {} to
      (330:1.732) node[vertex,label=right:5] (5) {};
\tr[tr1] 1 \db 2 \db 3 \db
\tr[tr2] 1 \rbhb 2 \rbhb 3 \rbhb
\tr[tr3] 1 \db 4 \db 5 \db
\tr[tr4] 1 \lbhb 4 \lbhb 5 \lbhb
\end{tikzpicture} &
$1$&
$n(1-p)$&
$p^7+np^9$&
Lem.~\ref{lem10} \\
\begin{tikzpicture}
\path (  0:0    ) node[vertex,label=below:1] (1) {} to 
      (150:1.732) node[vertex,label= left:2] (2) {} to 
      (210:1.732) node[vertex,label= left:3] (3) {} to
      ( 30:1.732) node[vertex,label=right:4] (4) {} to
      (330:1.732) node[vertex,label=right:5] (5) {};
\tr[tr1] 1 \db 2 \db 3 \db
\tr[tr2] 1 \rbhb 2 \lbb 4 \lbhb
\tr[tr3] 1 \db 4 \db 5 \db
\tr[tr4] 1 \lbhb 4 \lbhb 5 \lbhb
\end{tikzpicture} &
$1$&
$n^2(1-p)$&
$p^7+np^9+n^2p^{11}$&
Lem.~\ref{lem9} \\
\begin{tikzpicture}
\path (  0:0    ) node[vertex,label=below:1] (1) {} to 
      (150:1.732) node[vertex,label= left:2] (2) {} to 
      (210:1.732) node[vertex,label= left:3] (3) {} to
      ( 30:1.732) node[vertex,label=right:4] (4) {} to
      (330:1.732) node[vertex,label=right:5] (5) {};
\tr[tr1] 1 \db 2 \db 3 \db
\tr[tr2] 3 \rbhb 2 \lbb 4 \rbb
\tr[tr3] 1 \db 4 \db 5 \db
\tr[tr4] 1 \lbhb 4 \lbhb 5 \lbhb
\end{tikzpicture} &
$1$&
$n^2(1-p)$&
$p^7+np^9+n^2p^{11}$&
Lem.~\ref{lem9} \\
\begin{tikzpicture}
\path (  0:0    ) node[vertex,label=below:1] (1) {} to 
      (150:1.732) node[vertex,label= left:2] (2) {} to 
      (210:1.732) node[vertex,label= left:3] (3) {} to
      ( 30:1.732) node[vertex,label=right:4] (4) {} to
      (330:1.732) node[vertex,label=right:5] (5) {} to
      ( 90:1.732) node[vertex,label=right:6] (6) {};
\tr[tr1] 1 \db 2 \db 3 \db
\tr[tr2] 1 \rbhb 2 \lhb 6 \lhb
\tr[tr3] 1 \db 4 \db 5 \db
\tr[tr4] 1 \lbhb 4 \lbhb 5 \lbhb
\end{tikzpicture} &
$n$&
$n(1-p)$&
$p^9+np^{11}$&
Lem.~\ref{lem10} \\
\begin{tikzpicture}
\path (  0:0    ) node[vertex,label=below:1] (1) {} to 
      (150:1.732) node[vertex,label=north west:2] (2) {} to 
      (210:1.732) node[vertex,label=south west:3] (3) {} to
      ( 30:1.732) node[vertex,label=right:4] (4) {} to
      (330:1.732) node[vertex,label=right:5] (5) {} to
      (180:3    ) node[vertex,label= left:6] (6) {};
\tr[tr1] 1 \db 2 \db 3 \db
\tr[tr2] 3 \lbhb 2 \rhb 6 \rhb
\tr[tr3] 1 \db 4 \db 5 \db
\tr[tr4] 1 \lbhb 4 \lbhb 5 \lbhb
\end{tikzpicture} &
$n$&
$n(1-p)$&
$p^9+np^{11}$&
Lem.~\ref{lem10} \\
\begin{tikzpicture}
\path (  0:0    ) node[vertex,label=below:1] (1) {} to 
      (150:1.732) node[vertex,label= left:2] (2) {} to 
      (210:1.732) node[vertex,label= left:3] (3) {} to
      ( 30:1.732) node[vertex,label=right:4] (4) {} to
      (330:1.732) node[vertex,label=right:5] (5) {};
\tr[tr1] 1 \db 2 \db 3 \db
\tr[tr2] 1 \rbhb 2 \lbb 4 \rbhb
\tr[tr3] 1 \db 4 \db 5 \db
\tr[tr4] 1 \rbhb 4 \rbbb 2 \rbhb
\end{tikzpicture} &
$1$&
$n^2(1-p)$&
$p^7+np^9+n^2p^{11}$&
Lem.~\ref{lem9} \\
\begin{tikzpicture}
\path (  0:0    ) node[vertex,label=below:1] (1) {} to 
      (150:1.732) node[vertex,label= left:2] (2) {} to 
      (210:1.732) node[vertex,label= left:3] (3) {} to
      ( 30:1.732) node[vertex,label=right:4] (4) {} to
      (330:1.732) node[vertex,label=right:5] (5) {};
\tr[tr1] 1 \db 2 \db 3 \db
\tr[tr2] 1 \rbhb 2 \lbb 4 \rbhb
\tr[tr3] 1 \db 4 \db 5 \db
\tr[tr4] 1 \rbhb 5 \lbbbb 2 \rbhb
\end{tikzpicture} &
$1$&
$n^2(1-p)$&
$p^8+~np^{10}+~n^2p^{12}$ $\leq p^7+np^9+n^2p^{11}$& 
Lem.~\ref{lem9} \\
\begin{tikzpicture}
\path (  0:0    ) node[vertex,label=below:1] (1) {} to 
      (150:1.732) node[vertex,label= left:2] (2) {} to 
      (210:1.732) node[vertex,label= left:3] (3) {} to
      ( 30:1.732) node[vertex,label=right:4] (4) {} to
      (330:1.732) node[vertex,label=right:5] (5) {};
\tr[tr1] 1 \db 2 \db 3 \db
\tr[tr2] 1 \rbhb 2 \lbhb 4 \rbhb
\tr[tr3] 1 \db 4 \db 5 \db
\tr[tr4] 2 \lbbb 4 \lbhb 5 \lbbbb
\end{tikzpicture} &
$1$&
$n^2(1-p)$&
$p^8+~np^{10}+~n^2p^{12}$ $\leq p^7+np^9+n^2p^{11}$& 
Lem.~\ref{lem9} \\
\begin{tikzpicture}
\path (  0:0    ) node[vertex,label=below:1] (1) {} to 
      (150:1.732) node[vertex,label= left:2] (2) {} to 
      (210:1.732) node[vertex,label= left:3] (3) {} to
      ( 30:1.732) node[vertex,label=right:4] (4) {} to
      (330:1.732) node[vertex,label=right:5] (5) {} to
      (  0:3    ) node[vertex,label=above:6] (6) {};
\tr[tr1] 1 \db 2 \db 3 \db
\tr[tr2] 1 \rbhb 2 \lbhb 4 \rbhb
\tr[tr3] 1 \db 4 \db 5 \db
\tr[tr4] 1 \rbhb 4 \rbhb 6 \lbhb
\end{tikzpicture} &
$n$&
$n^2(1-p)$&
$p^9+np^{11}+n^2p^{13}$&
Lem.~\ref{lem9} \\
\begin{tikzpicture}
\path (  0:0    ) node[vertex,label=above:1] (1) {} to 
      (150:1.732) node[vertex,label= left:2] (2) {} to 
      (210:1.732) node[vertex,label= left:3] (3) {} to
      ( 30:1.732) node[vertex,label=right:4] (4) {} to
      (330:1.732) node[vertex,label=right:5] (5) {} to
      (270:1.732) node[vertex,label=below:6] (6) {};
\tr[tr1] 1 \db 2 \db 3 \db
\tr[tr2] 1 \rbhb 2 \lbhb 4 \rbhb
\tr[tr3] 1 \db 4 \db 5 \db
\tr[tr4] 1 \rbhb 5 \lhb 6 \lhb
\end{tikzpicture} &
$n$&
$n^2(1-p)$&
$p^9+np^{11}+n^2p^{13}$&
Lem.~\ref{lem9} \\
\begin{tikzpicture}
\path (  0:0    ) node[vertex,label=below:1] (1) {} to 
      (150:1.732) node[vertex,label= left:2] (2) {} to 
      (210:1.732) node[vertex,label= left:3] (3) {} to
      ( 30:1.732) node[vertex,label=north east:4] (4) {} to
      (330:1.732) node[vertex,label=south east:5] (5) {} to
      (  0:3    ) node[vertex,label=right:6] (6) {};
\tr[tr1] 1 \db 2 \db 3 \db
\tr[tr2] 1 \rbhb 2 \lbhb 4 \rbhb
\tr[tr3] 1 \db 4 \db 5 \db
\tr[tr4] 6 \rhb 4 \lbhb 5 \rhb
\end{tikzpicture} &
$n$&
$n(1-p)$&
$p^9+np^{11}+n^2p^{13}$&
Lem.~\ref{lem9} \\
\begin{tikzpicture}
\path (  0:0    ) node[vertex,label=below:1] (1) {} to 
      (150:1.732) node[vertex,label= left:2] (2) {} to 
      (210:1.732) node[vertex,label= left:3] (3) {} to
      ( 30:1.732) node[vertex,label=right:4] (4) {} to
      (330:1.732) node[vertex,label=right:5] (5) {};
\tr[tr1] 1 \db 2 \db 3 \db
\tr[tr2] 3 \lbhb 2 \lbb 4 \rbbb
\tr[tr3] 1 \db 4 \db 5 \db
\tr[tr4] 3 \lbb 4 \lbhb 5 \lbb
\end{tikzpicture} &
$1$&
$n^2(1-p)$&
$p^9+~np^{11}+~n^2p^{13}$ $\leq p^7+np^9+n^2p^{11}$& 
Lem.~\ref{lem9} \\
\begin{tikzpicture}
\path (  0:0    ) node[vertex,label=below:1] (1) {} to 
      (150:1.732) node[vertex,label= left:2] (2) {} to 
      (210:1.732) node[vertex,label= left:3] (3) {} to
      ( 30:1.732) node[vertex,label=right:4] (4) {} to
      (330:1.732) node[vertex,label=right:5] (5) {} to
      (  0:3    ) node[vertex,label=above:6] (6) {};
\tr[tr1] 1 \db 2 \db 3 \db
\tr[tr2] 3 \lbhb 2 \lbb 4 \rbb
\tr[tr3] 1 \db 4 \db 5 \db
\tr[tr4] 1 \rbhb 4 \rbhb 6 \lbhb
\end{tikzpicture} &
$n$&
$n^2(1-p)$&
$p^{10}+~np^{12}+~n^2p^{14}$ $\leq p^9+np^{11}+n^2p^{13}$& 
Lem.~\ref{lem9} \\
\begin{tikzpicture}
\path (  0:0    ) node[vertex,label=right:1] (1) {} to 
      (150:1.732) node[vertex,label= left:2] (2) {} to 
      (210:1.732) node[vertex,label= left:3] (3) {} to
      ( 30:1.732) node[vertex,label=right:4] (4) {} to
      (330:1.732) node[vertex,label=right:5] (5) {} to
      (270:1.732) node[vertex,label=below:6] (6) {};
\tr[tr1] 1 \db 2 \db 3 \db
\tr[tr2] 3 \lbhb 2 \lbb 4 \rbb
\tr[tr3] 1 \db 4 \db 5 \db
\tr[tr4] 1 \rbhb 5 \lhb 6 \lhb
\end{tikzpicture} &
$n$&
$n^2(1-p)$&
$p^{10}+~np^{12}+~n^2p^{14}$ $\kern -0.1em\leq p^9+np^{11}+n^2p^{13}$& 
Lem.~\ref{lem9} \\
\begin{tikzpicture}
\path (  0:0    ) node[vertex,label=below:1] (1) {} to 
      (150:1.732) node[vertex,label= left:2] (2) {} to 
      (210:1.732) node[vertex,label= left:3] (3) {} to
      ( 30:1.732) node[vertex,label=north east:4] (4) {} to
      (330:1.732) node[vertex,label=south east:5] (5) {} to
      (  0:3    ) node[vertex,label=above:6] (6) {};
\tr[tr1] 1 \db 2 \db 3 \db
\tr[tr2] 3 \lbhb 2 \lbb 4 \rbb
\tr[tr3] 1 \db 4 \db 5 \db
\tr[tr4] 5 \rbhb 4 \lhb 6 \lhb
\end{tikzpicture} &
$n$&
$n^2(1-p)$&
$p^{10}+~np^{12}+~n^2p^{14}$ $\leq p^9+np^{11}+n^2p^{13}$& 
Lem.~\ref{lem9} \\
\begin{tikzpicture}
\path (  0:0    ) node[vertex,label=below:1] (1) {} to 
      (150:1.732) node[vertex,label= left:2] (2) {} to 
      (210:1.732) node[vertex,label= left:3] (3) {} to
      ( 30:1.732) node[vertex,label=right:4] (4) {} to
      (330:1.732) node[vertex,label=right:5] (5) {} to
      ( 90:1.732) node[vertex,label=right:6] (6) {};
\tr[tr1] 1 \db 2 \db 3 \db
\tr[tr2] 1 \rbhb 2 \lhb 6 \rb
\tr[tr3] 1 \db 4 \db 5 \db
\tr[tr4] 1 \lbhb 4 \rhb 6 \lb
\end{tikzpicture} &
$n$&
$n^2(1-p)$&
$p^9+np^{11}+n^2p^{13}$&
Lem.~\ref{lem9} \\
\begin{tikzpicture}
\path (  0:0    ) node[vertex,label=right:1] (1) {} to 
      (150:1.732) node[vertex,label= left:2] (2) {} to 
      (210:1.732) node[vertex,label= left:3] (3) {} to
      ( 30:1.732) node[vertex,label=right:4] (4) {} to
      (330:1.732) node[vertex,label=right:5] (5) {} to
      ( 90:1.732) node[vertex,label=right:6] (6) {} to
      (270:1.732) node[vertex,label= left:7] (7) {};
\tr[tr1] 1 \db 2 \db 3 \db
\tr[tr2] 1 \rbhb 2 \lhb 6 \lhb
\tr[tr3] 1 \db 4 \db 5 \db
\tr[tr4] 1 \rbhb 5 \lhb 7 \lb
\end{tikzpicture} &
$n^2$&
$n(1-p)$&
$p^{11}+np^{13}$&
Lem.~\ref{lem10} \\
\begin{tikzpicture}
\path (  0:0    ) node[vertex,label=below:1] (1) {} to 
      (150:1.732) node[vertex,label= left:2] (2) {} to 
      (210:1.732) node[vertex,label= left:3] (3) {} to
      ( 30:1.732) node[vertex,label=right:4] (4) {} to
      (330:1.732) node[vertex,label=right:5] (5) {} to
      ( 90:1.732) node[vertex,label=right:6] (6) {};
\tr[tr1] 1 \db 2 \db 3 \db
\tr[tr2] 1 \rbhb 2 \lhb 6 \lhb
\tr[tr3] 1 \db 4 \db 5 \db
\tr[tr4] 5 \rbhb 4 \rhb 6 \rhb
\end{tikzpicture} &
$n$&
$n^2(1-p)$&
$p^{10}+~np^{12}+~n^2p^{14}$ $\leq p^9+np^{11}+n^2p^{13}$& 
Lem.~\ref{lem9} \\
\begin{tikzpicture}
\path (  0:0    ) node[vertex,label=below:1] (1) {} to 
      (150:1.732) node[vertex,label= left:2] (2) {} to 
      (210:1.732) node[vertex,label= left:3] (3) {} to
      ( 30:1.732) node[vertex,label=north east:4] (4) {} to
      (330:1.732) node[vertex,label=south east:5] (5) {} to
      ( 90:1.732) node[vertex,label=right:6] (6) {} to
      (  0:3    ) node[vertex,label=right:7] (7) {};
\tr[tr1] 1 \db 2 \db 3 \db
\tr[tr2] 1 \rbhb 2 \lhb 6 \lhb
\tr[tr3] 1 \db 4 \db 5 \db
\tr[tr4] 4 \lbhb 5 \rhb 7 \rhb
\end{tikzpicture} &
$n^2$&
$n(1-p)$&
$p^{11}+np^{13}$&
Lem.~\ref{lem10} \\
\begin{tikzpicture}
\path (  0:0    ) node[vertex,label=below:1] (1) {} to 
      (150:1.732) node[vertex,label= left:2] (2) {} to 
      (210:1.732) node[vertex,label= left:3] (3) {} to
      ( 30:1.732) node[vertex,label=right:4] (4) {} to
      (330:1.732) node[vertex,label=right:5] (5) {} to
      ( 90:1.732) node[vertex,label=right:6] (6) {};
\tr[tr1] 1 \db 2 \db 3 \db
\tr[tr2] 3 \lbhb 2 \lhb 6 \lhb
\tr[tr3] 1 \db 4 \db 5 \db
\tr[tr4] 5 \rbhb 4 \rhb 6 \rhb
\end{tikzpicture} &
$n$&
$n^2(1-p)$&
$p^{10}+~np^{12}+~n^2p^{14}$ $\leq p^9+np^{11}+n^2p^{13}$& 
Lem.~\ref{lem9} \\
\begin{tikzpicture}
\path (  0:0    ) node[vertex,label=below:1] (1) {} to 
      (150:1.732) node[vertex,label=north west:2] (2) {} to 
      (210:1.732) node[vertex,label=south west:3] (3) {} to
      ( 30:1.732) node[vertex,label=north east:4] (4) {} to
      (330:1.732) node[vertex,label=south east:5] (5) {} to
      (180:3    ) node[vertex,label= left:6] (6) {} to
      (  0:3    ) node[vertex,label=right:7] (7) {};
\tr[tr1] 1 \db 2 \db 3 \db
\tr[tr2] 3 \lbhb 2 \rhb 6 \rhb
\tr[tr3] 1 \db 4 \db 5 \db
\tr[tr4] 4 \lbhb 5 \rhb 7 \rhb
\end{tikzpicture} &
$n^2$&
$n(1-p)$&
$p^{11}+np^{13}$&
Lem.~\ref{lem10} \\
\end{longtable}
\end{center}


\begin{center}\setlength{\tabcolsep}{.5em}
\footnotesize
\begin{longtable}{
	>{\centering}m{1.3in}
	>{\centering}m{0.5in}
	>{\centering}m{0.65in}
	>{\centering}m{1.05in}
	>{\centering\arraybackslash}m{0.5in}
}
\caption{\label{tab4} All possible combinations of triangle indicators that can occur when expanding~$r_{4,1,4}$ and~$r_{4,2,4}$, along with the bounds on the  corresponding summands.}\\
\toprule
\bf Pattern & 
\bf occur\-rences & 
\bf bound for large~$\boldsymbol{p}$&
\bf bound for small~$\boldsymbol{p}$&
\bf Lemma used\\
\midrule
\endfirsthead

\multicolumn{4}{c}%
{\tablename~\thetable\ (continued from previous page)} \\
\midrule
\bf Pattern & 
\bf occur\-rences & 
\bf bound for large~$\boldsymbol{p}$&
\bf bound for small~$\boldsymbol{p}$&
\bf Lemma used\\
\midrule
\endhead

\multicolumn{5}{r}{(continued on next page)}\\
\endfoot

\bottomrule
\endlastfoot

\begin{tikzpicture}
\path ( 90:1) node[vertex,label=above:1] (1) {} to 
      (210:1) node[vertex,label=below:2 ] (2) {} to 
      (330:1) node[vertex,label=below:3] (3) {};
\path (2.598,0.5)
			+(270:1) node[vertex,label=below:4] (4) {} to 
      +(390:1) node[vertex,label=above:5] (5) {} to 
      +(150:1) node[vertex,label=above:6] (6) {};
\tr[tr1] 1 \db 2 \db 3 \db
\tr[tr2] 1 \rbhb 2 \rbhb 3 \rbhb
\tr[tr3] 4 \db 5 \db 6 \db
\tr[tr4] 4 \rbhb 5 \rbhb 6 \rbhb
\end{tikzpicture} &
$1$&
$1-p$&
$p^9$&
Lem.~\ref{lem11} \\
\begin{tikzpicture}
\path ( 90:1) node[vertex,label=above:1] (1) {} to 
      (210:1) node[vertex,label=below:2 ] (2) {} to 
      (330:1) node[vertex,label=below:3] (3) {};
\path (2.598,0.5)
			+(270:1) node[vertex,label=below:4] (4) {} to 
      +(390:1) node[vertex,label=above:5] (5) {} to 
      +(150:1) node[vertex,label=above:6] (6) {};
\tr[tr1] 1 \db 2 \db 3 \db
\tr[tr2] 1 \lhb 6 \lhb 3 \rbhb
\tr[tr3] 4 \db 5 \db 6 \db
\tr[tr4] 4 \rbhb 5 \rbhb 6 \rbhb
\end{tikzpicture} &
$1$&
$n(1-p)$&
$p^9+np^{11}$&
Lem.~\ref{lem12} \\
\begin{tikzpicture}
\path ( 90:1) node[vertex,label=above:1] (1) {} to 
      (210:1) node[vertex,label=below:2 ] (2) {} to 
      (330:1) node[vertex,label=below:3] (3) {};
\path (2.598,0.5)
			+(270:1) node[vertex,label=below:4] (4) {} to 
      +(390:1) node[vertex,label=above:5] (5) {} to 
      +(150:1) node[vertex,label=above:6] (6) {};
\path (-0.866,0.5)
      +(150:1) node[vertex,label=above:7] (7) {};
\tr[tr1] 1 \db 2 \db 3 \db
\tr[tr2] 1 \rhb 7 \rhb 2 \lbhb
\tr[tr3] 4 \db 5 \db 6 \db
\tr[tr4] 4 \rbhb 5 \rbhb 6 \rbhb
\end{tikzpicture} &
$n$&
$1-p$&
$p^{11}$&
Lem.~\ref{lem11} \\
\begin{tikzpicture}
\path ( 90:1) node[vertex,label=above:1] (1) {} to 
      (210:1) node[vertex,label=below:2 ] (2) {} to 
      (330:1) node[vertex,label=below:3] (3) {};
\path (2.598,0.5)
			+(270:1) node[vertex,label=below:4] (4) {} to 
      +(390:1) node[vertex,label=above:5] (5) {} to 
      +(150:1) node[vertex,label=above:6] (6) {};
\tr[tr1] 1 \db 2 \db 3 \db
\tr[tr2] 1 \lb 6 \rb 3 \rbhb
\tr[tr3] 4 \db 5 \db 6 \db
\tr[tr4] 3 \rb 4 \lbhb 6 \lb
\end{tikzpicture} &
$1$&
$n^2(1-p)$&
$p^9+np^{11}+n^2p^{13}$&
Lem.~\ref{lem9} \\
\begin{tikzpicture}
\path ( 90:1) node[vertex,label=above:1] (1) {} to 
      (210:1) node[vertex,label=below:2 ] (2) {} to 
      (330:1) node[vertex,label=below:3] (3) {};
\path (2.598,0.5)
			+(270:1) node[vertex,label=below:4] (4) {} to 
      +(390:1) node[vertex,label=above:5] (5) {} to 
      +(150:1) node[vertex,label=above:6] (6) {};
\tr[tr1] 1 \db 2 \db 3 \db
\tr[tr2] 1 \lb 6 \lhb 3 \rbhb
\tr[tr3] 4 \db 5 \db 6 \db
\tr[tr4] 2 \lbb 4 \lbhb 6 \rb
\end{tikzpicture} &
$1$&
$n^2(1-p)$&
$p^{10}+~np^{12}+~n^2p^{14}$ $\leq p^9+np^{11}+n^2p^{13}$& 
Lem.~\ref{lem9} \\
\begin{tikzpicture}
\path ( 90:1) node[vertex,label=above:1] (1) {} to 
      (210:1) node[vertex,label=below:2 ] (2) {} to 
      (330:1) node[vertex,label=below:3] (3) {};
\path (2.598,0.5)
			+(270:1) node[vertex,label=below:4] (4) {} to 
      +(390:1) node[vertex,label=above:5] (5) {} to 
      +(150:1) node[vertex,label=above:6] (6) {};
\tr[tr1] 1 \db 2 \db 3 \db
\tr[tr2] 1 \lb 6 \lhb 3 \rbhb
\tr[tr3] 4 \db 5 \db 6 \db
\tr[tr4] 3 \lb 4 \lbhb 5 \rb
\end{tikzpicture} &
$1$&
$n^2(1-p)$&
$p^{10}+~np^{12}+~n^2p^{14}$ $\leq p^9+np^{11}+n^2p^{13}$& 
Lem.~\ref{lem9} \\
\begin{tikzpicture}
\path ( 90:1) node[vertex,label=above:1] (1) {} to 
      (210:1) node[vertex,label=below:2 ] (2) {} to 
      (330:1) node[vertex,label=below:3] (3) {};
\path (2.598,0.5)
			+(270:1) node[vertex,label=below:4] (4) {} to 
      +(390:1) node[vertex,label=above:5] (5) {} to 
      +(150:1) node[vertex,label=above:6] (6) {};
\tr[tr1] 1 \db 2 \db 3 \db
\tr[tr2] 1 \lb 6 \lhb 3 \rbhb
\tr[tr3] 4 \db 5 \db 6 \db
\tr[tr4] 2 \lbb 4 \lbhb 5 \rb
\end{tikzpicture} &
$1$&
$n^2(1-p)$&
$p^{10}+~np^{12}+~n^2p^{14}$ $\leq p^9+np^{11}+n^2p^{13}$& 
Lem.~\ref{lem9} \\
\begin{tikzpicture}
\path ( 90:1) node[vertex,label=above:1] (1) {} to 
      (210:1) node[vertex,label=below:2 ] (2) {} to 
      (330:1) node[vertex,label=below:3] (3) {};
\path (2.598,0.5)
			+(270:1) node[vertex,label=below:4] (4) {} to 
      +(390:1) node[vertex,label=above:5] (5) {} to 
      +(150:1) node[vertex,label=above:6] (6) {};
\path (-0.866,0.5)
      +(150:1) node[vertex,label=above:7] (7) {};
\tr[tr1] 1 \db 2 \db 3 \db
\tr[tr2] 1 \rhb 7 \rhb 2 \lbhb
\tr[tr3] 4 \db 5 \db 6 \db
\tr[tr4] 4 \lhb 1 \lb 6 \rbhb
\end{tikzpicture} &
$n$&
$n(1-p)$&
$p^{11}+np^{13}$&
Lem.~\ref{lem12} \\
\begin{tikzpicture}
\path ( 90:1) node[vertex,label=above:1] (1) {} to 
      (210:1) node[vertex,label=below:2 ] (2) {} to 
      (330:1) node[vertex,label=below:3] (3) {};
\path (2.598,0.5)
			+(270:1) node[vertex,label=below:4] (4) {} to 
      +(390:1) node[vertex,label=above:5] (5) {} to 
      +(150:1) node[vertex,label=above:6] (6) {};
\path (-0.866,0.5)
      +(150:1) node[vertex,label=above:7] (7) {};
\tr[tr1] 1 \db 2 \db 3 \db
\tr[tr2] 1 \rhb 7 \rhb 2 \lbhb
\tr[tr3] 4 \db 5 \db 6 \db
\tr[tr4] 4 \lhb 3 \lhb 6 \rbhb
\end{tikzpicture} &
$n$&
$n(1-p)$&
$p^{11}+np^{13}$&
Lem.~\ref{lem12} \\
\begin{tikzpicture}
\path ( 90:1) node[vertex,label=above:1] (1) {} to 
      (210:1) node[vertex,label=below:2 ] (2) {} to 
      (330:1) node[vertex,label=below:3] (3) {};
\path (2.598,0.5)
			+(270:1) node[vertex,label=below:4] (4) {} to 
      +(390:1) node[vertex,label=above:5] (5) {} to 
      +(150:1) node[vertex,label=above:6] (6) {};
\path (-0.866,0.5)
      +(150:1) node[vertex,label=above:7] (7) {};
\tr[tr1] 1 \db 2 \db 3 \db
\tr[tr2] 1 \rhb 7 \rhb 2 \lbhb
\tr[tr3] 4 \db 5 \db 6 \db
\tr[tr4] 4 \lb 7 \rbhb 6 \rbhb
\end{tikzpicture} &
$n$&
$n(1-p)$&
$p^{11}+np^{13}$&
Lem.~\ref{lem12} \\
\begin{tikzpicture}
\path ( 90:1) node[vertex,label=above:1] (1) {} to 
      (210:1) node[vertex,label=below:2 ] (2) {} to 
      (330:1) node[vertex,label=below:3] (3) {};
\path (2.598,0.5)
			+(270:1) node[vertex,label=below:4] (4) {} to 
      +(390:1) node[vertex,label=above:5] (5) {} to 
      +(150:1) node[vertex,label=above:6] (6) {};
\path (-0.866,0.5)
      +(150:1) node[vertex,label=above:7] (7) {};
\path (3.464,0)
      +(330:1) node[vertex,label=below:8] (8) {};
\tr[tr1] 1 \db 2 \db 3 \db
\tr[tr2] 1 \rhb 7 \rhb 2 \lbhb
\tr[tr3] 4 \db 5 \db 6 \db
\tr[tr4] 4 \rhb 8 \rhb 5 \lbhb
\end{tikzpicture} &
$n^2$&
$1-p$&
$p^{13}$&
Lem.~\ref{lem11} \\
\end{longtable}
\end{center}



\section*{Acknowledgments}
I thank Alexander Tikhomirov for very helpful discussions. Thanks go to the Department of Mathematics at the University of Z\"urich, where part of this article was written, and in particular to Valentin F\'eray for fruitful discussions and for calling the work of \cite{Krokowski2015} to my attention. I also thank the Institute of Mathematical Sciences, NUS, for supporting the workshop \emph{Workshop on New Directions in Stein's Method} in March~2015.  This work was supported in parts by NUS Research Grand R-155-000-167-112.

\setlength{\bibsep}{0.5ex}
\def\bibfont{\small}


\end{document}